\theoremstyle{plain}
\newtheorem{thm}{Theorem}[section]
\newtheorem{thm*}{Theorem}[section]
\newtheorem{cor}[thm]{Corollary}
\newtheorem{prop}[thm]{Proposition}
\newtheorem{lemma}[thm]{Lemma}
\newtheorem{lemma*}{Lemma}
\newtheorem{proposition}[thm]{Proposition} 
\newtheorem{corollary}[thm]{Corollary}
\theoremstyle{definition}
\newtheorem{defn}[thm]{Definition}
\newtheorem{remark}[thm]{Remark}
\newtheorem*{remark*}{Remark}
\newtheorem{ex}[thm]{Example}
\newtheorem{question*}{Question}
\numberwithin{equation}{thm}
\newcommand{\bC}{\mathbb C}
\newcommand{\cW}{\mathcal W}
\def\rk{\operatorname{Rk}\nolimits}
\def\dim{\operatorname{dim}\nolimits}
\def\pic{\operatorname{Pic}\nolimits}
\def\im{\operatorname{im}\nolimits}
\DeclareMathOperator{\codim}{codim}
\DeclareMathOperator{\Hom}{Hom}
\DeclareMathOperator{\spec}{Spec}
\DeclareMathOperator{\id}{id}
\DeclareMathOperator{\charct}{char}
\DeclareMathOperator{\contalg}{c.alg}
\newcommand{\ol}{\overline}
\newcommand{\cO}{\mathcal O}
\newcommand{\bA}{\mathbb A}
\newcommand{\cF}{\mathcal F}
\newcommand{\bP}{\mathbb P}
\newcommand{\bZ}{\mathbb Z}
\newcommand{\cZ}{\mathcal Z}
\newcommand{\cC}{\mathcal C}
\newcommand{\cK}{\mathcal K}
\newcommand{\cal}{\mathcal}
\newcommand{\bb}{\mathbb}
\newcommand{\PP}{\mathbb{P}}
\def\rk{\operatorname{rk}\nolimits}
\def\id{\operatorname{id}\nolimits}
\def\sl2{\operatorname{SL_{2(2)}}\nolimits}
\def\Ga2{\operatorname{\mathbb G_{\rm a(2)}}\nolimits}
\def\Hom{\operatorname{Hom}\nolimits}
\def\colim{\operatorname{colim}\nolimits}
\def\FRAC{\operatorname{Frac}\nolimits}
\newcommand{\wt}{\widetilde}
\newcommand{\xym}{\xymatrix}
\newcommand{\Z}{\mathbb Z}
\newcommand{\bu}{\bullet}
\newcommand{\into}{\hookrightarrow}
\newcommand{\Deltadot}{\Delta^{\bullet}}
\date\today
\begin{document}

 \title[intersections via motivic complexes]{An approach to intersection theory on singular varieties using motivic complexes}
 
 \author{Eric M. Friedlander$^{*}$ and Joseph Ross}

\address {Department of Mathematics, University of Southern California,
Los Angeles, CA}
\email{ericmf@usc.edu, eric@math.northwestern.edu}

\address {Department of Mathematics, University of Southern California,
Los Angeles, CA}
\email{joseph.ross@gmail.com}

\thanks{$^{*}$ partially supported by the NSF grants DMS-0909314 and DMS-0966589}

\subjclass[2000]{14F43, 14C25, 19E15}

\keywords{intersection {homology}, perversity, cocycles, {cycle complexes}}

\begin{abstract} 
We introduce techniques of Suslin, Voevodsky, and others into the
study of singular varieties.  Our approach is modeled after Goresky-MacPherson 
intersection homology.  We provide a formulation of  perversity cycle spaces
leading to perversity homology theory and a companion perversity cohomology 
theory based upon generalized cocycle spaces.  These theories lead to 
conditions on pairs of cycles which can be intersected and a suitable 
equivalence relation on cocycles/cycles enabling pairings on
equivalence classes.  We establish suspension and splitting theorems, 
as well as a localization property.  Some examples of intersections on singular 
varieties are computed.
\end{abstract}

\maketitle

\tableofcontents


\section{Introduction}

	In this paper, we initiate an investigation of pairings on cycle
groups on singular algebraic varieties over a field.  We 
utilize the approach to motivic cohomology developed by A.~Suslin and V.~Voevodsky \cite{SV},
blended with the philosophy of intersection homology theory as 
introduced by M.~Goresky and R.~MacPherson \cite{GM1}.  An important source of insight
for the approach we take comes from ``semi-topological cohomology and 
homology," especially from the foundations of Lawson homology due to H.~B.~Lawson \cite{Lawson}.

	Our goal is to provide contexts in which there is a good formulation
of the intersection product of cycles on singular varieties.  This is an
age-old problem, one that motivated the original introduction of cohomology
and in some sense culminated with intersection homology theory for 
stratified topological spaces.  In the context of algebraic varieties, the
moving techniques for stratified spaces (for example, those of \cite{McC})
do not apply.  Indeed, we know of
no means of improving intersections occurring within the singular locus of
a given variety.

{If  $X$ is a smooth projective variety, then Poincar\'e duality provides a ring structure
on the singular homology of $X$.  This product admits a purely algebro-geometric description
on the fundamental classes of algebraic cycles $\alpha$ and $\beta$: by the method of Chow, one can move $\alpha$
within its rational equivalence class (to $\alpha'$, say) so that $\alpha'$ and $\beta$ intersect properly (i.e., in the expected dimension).
For proper intersections on a smooth variety, multiplicities may be defined purely algebraically,
for example by the Tor-formula of Serre.  The homology class of the cycle class $\alpha \bu \beta$ represents the product of the homology
classes of $\alpha$ and $\beta$. }

{If $X$ is singular, then its homology groups typically cannot be endowed with a reasonable ring structure.  
The intersection homology of Goresky-MacPherson rectifies this by defining groups $IH_*^{\ol p}(X)$ 
which, roughly speaking, are the homology groups of a complex of chains with controlled incidence 
with the singular locus of $X$.  There are intersection pairings 
$IH^{\ol p}_r (X) \otimes IH^{\ol q}_s(X) \to IH^{\ol p + \ol q}_{r+s - \dim(X)}(X)$ (provided some conditions 
are satisfied) which, {in case $r+s = \dim(X)$,} become perfect after tensoring with the rationals.  {The challenge which originally motivated us }
was to extend the picture of the previous paragraph, namely the description of the intersection product of 
algebraic cycle classes, to intersection homology of singular varieties. }

{Previous approaches to this problem have not led to an intersection pairing lifting the Goresky-MacPherson pairing.
P.~Gajer defined a semi-topological version of intersection homology and established
some of its structural properties \cite{Gajer}.  A.~Corti and M.~Hanamura gave a definition of intersection Chow groups by incorporating
information obtained from a resolution of singularities \cite{CortiHan2}; they provided also a motivic lifting of the decomposition theorem of
\cite{BBD} assuming various conjectures on algebraic cycles \cite{CortiHan1}.  J.~Wildeshaus used weight structures to define a motivic intersection complex, and proved its existence in some cases \cite{Wild}.  In the topological setting, intersection homology may be defined geometrically, using a subcomplex of the complex of singular chains \cite{GM1}, or sheaf-theoretically, using the constructible derived category \cite{GM2}.  In the algebraic setting, it would be interesting to relate our geometrically oriented approach to the categorical constructions.}

Introducing cycle (and cocycle)
spaces and defining homotopy pairings on these spaces guides the formulation 
of equivalence relations on cycles and gives pairings on homotopy groups.  
The equivalence relations which arise are necessarily finer than rational equivalence:
even if one restricts attention to cycles which meet
``properly" and whose intersection meets the singular locus properly, one 
must take care in defining equivalence relations so that cup and cap product 
pairings are well defined on equivalence classes. 
Our primary interest is the intersection of fundamental classes
of algebraic cycles, corresponding to a pairing on connected components of
our cycle and cocycle spaces.

	We work with an algebraic variety $X$ equipped with a stratification; 
such a stratification might arise from  a resolution of singularities of $X$
or a ``platification" of a family of coherent sheaves on $X$.  
Fixing a perversity function $\ol p$, we introduce 
perversity cycles on $X$ and generalized cocycles on $X$ with values in $Y$.
These are cycles which meet the strata of $X$ (or
$X\times Y$) in a manner controlled by $\ol p$.   The discrete abelian 
groups of perversity cycles (and generalized cocycles) for a given variety
$X$ determine  presheaves which lead to singular
complexes (i.e., simplicial abelian groups) as first conceived by 
Suslin (see \cite{SV}).  Our homology/cohomology theories are the homotopy groups of 
these singular complexes, doubly graded in a manner compatible with the 
grading in motivic homology and cohomology \cite{MVW}. 

	We show that our theories satisfy good properties including suspension
 isomorphisms {({a projective analogue of }$\bb{A}^1$-invariance)}, a splitting theorem, and a suitable form of
localization.  These theorems enable our definition of a cup product
in perversity cohomology, extending the cup product in motivic cohomology;
{and a cap product relating perversity cohomology and perversity homology.}
To do this, we introduce the condition $(*, {\ol c})$ on
a pair of cycles and a perversity $\ol c$ which permits a sensible intersection
of cycles meeting especially nicely;  this intersection product is compatible with
that of Goresky-MacPherson intersection homology.

For the reader's convenience, we briefly outline the contents of each section of 
this paper.

In Section \ref{sec:one}, we revisit various sheaves and presheaves of
relative cycles as investigated by Suslin and Voevodsky.  We discuss to 
what extent and how these sheaves are represented by Chow varieties. These (pre)sheaves
are defined on $(Sch/k)$ so that we may apply results of Voevodsky on sheaves
for the cdh-topology;
the cycle sheaves are evaluated on the standard cosimplicial scheme
whose constituents are affine spaces $\Delta^n$.   In fact, one is naturally led to 
another of Voevodsky's Grothendieck topologies, the h-topology, when the 
characteristic of the ground field is positive.

We begin our study of cycles on a stratified (possibly singular) variety $X$ in Section \ref{sec:two}.
Following Goresky-MacPherson, we fix a ``perversity" $\ol p$ and consider $U$-relative
cycles on $U\times X$ whose specializations at points $u \in U$ meet the strata of $X_u$
in codimension controlled by the perversity $\ol p$.  Applying our sheaves to
$\Deltadot$, we obtain our perversity motivic homology groups 
$H^{\ol p}_n(X,\bZ(r))$ as the homotopy groups of the associated simplicial abelian
group (or, equivalently, as the homology of the associated chain complex).   There is
a natural map to motivic Borel-Moore homology
$H^{\ol p}_n(X,\bZ(r)) \to H^{BM}_n(X,\bZ(r))$
induced by an inclusion of simplicial abelian groups.
Furthermore, when our ground field
$k$ is the complex field $\bC$,  we verify in Proposition \ref{prop:GM} that
there is a natural map from the bidegree
perversity homology group corresponding to $\pi_0$ to 
 the Goresky-MacPherson intersection homology group.
  In Theorem \ref{cdh ses}, we use techniques of Voevodsky  to prove a form of localization
for our perversity motivic homology groups.  

A central theme of our work is the interplay between the sheaf-theoretic 
foundations of Suslin-Voevodsky and constructions using Chow varieties as 
first considered by Lawson in \cite{Lawson}.  In particular, in Section \ref{sec:three},
we employ the constructions introduced by Lawson to prove suspension
theorems for our homology groups.  These theorems are first proved in Theorem \ref{thm:proj-susp} for 
projective varieties (for Chow varieties are defined for projective varieties) and 
then extended to quasi-projective varieties using the localization theorem of 
the previous section.  The proofs require verification that ``Lawson moving constructions"
preserve perversity of cycles.

In Section \ref{sec:four}, we relate our groups to the problem of intersecting cycles 
on a stratified singular variety.  We introduce 
the condition $(*, {\ol c})$ on a pair of cycles which allows (static) intersection with
good properties, especially suitable behavior with respect to specialization.
{For example, Corollary \ref{specialization} verifies that this intersection
commutes with specialization, the formal analogue of being continuous.}
We analyze in detail the resulting intersection pairing for the  standard 
example due to Zobel of the cone on $\bP^1 \times \bP^1$.  

The ``generalized cocycles" introduced in  Definition \ref{def:cocycle} pair 
well with perversity cycles.  Perversity 
cycles satisfy an incidence condition
with the strata of a given stratified variety $X$, whereas 
generalized cocycles on $X$ with values in some $Y$ are cycles on 
$X\times Y$ more general than cocycles (i.e., not necessarily equidimensional
over $X$) whose fiber dimensions over points of $X$ are controlled by 
the perversity $\ol p$.  Algebraic cocycles
first appeared in work of the first author and Lawson \cite{FLcocycle} as an algebraic
model for cocycles in algebraic topology; our groups are a stratified variant 
of groups briefly considered by the first author and Gabber in \cite{FG} 
(a more sophisticated form of which is presented in the paper of the first author
and Voevodsky \cite{FV}.)  These bivariant perversity motivic cohomology
groups satisfy a suspension theorem (Theorem \ref{thm:suspgen})
which leads to perversity motivic cohomology
by setting the covariant variable equal to a projective space.  As we describe,
generalized cocycles arise from resolutions and 
from coherent sheaves (with stratification determined by the resolution or the sheaf).
{The second author has investigated the possibility of using resolutions
to define an intersection theory for perversity cycle classes on singular varieties \cite{JoeNew}.
The geometric approach of \cite{JoeNew} produces pairings in several interesting cases.}

In the final section of this paper, we lay the foundations for applications by
establishing a cup product on perversity motivic cohomology and a cap
product pairing relating perversity motivic cohomology and perversity motivic homology.
{For example, in Theorem $\ref{thm:splitting}$
we establish the (motivic) perversity version of the 
splitting theorems established for semi-topological cohomology by the first
author and Lawson.}
We conclude by verifying in Proposition \ref{prop:new compat} that our
cup product pairing is compatible with the intersection product in intersection homology.

Throughout, we work over an infinite field $k$ of characteristic $p \geq 0$.
When we apply Voevodsky's acyclicity theorem, we must additionally impose that $k$ is perfect.
In positive characteristic, we try to avoid inverting $p$ wherever possible.
For us, a $k$-scheme is a separated scheme of finite type over $k$, and a 
variety is an integral $k$-scheme.


\section{Roadmap for cycle presheaves}
\label{sec:one}

We employ a  plethora of presheaves and sheaves of algebraic cycles.  Our invariants are homotopy groups of
simplicial abelian groups { (equivalently, homology groups of associated normalized chain complexes)
obtained by evaluating an abelian (pre)sheaf on a cosimplicial scheme.}
Our geometric constructions are correspondences among Chow varieties of $r$-dimensional 
cycles on a projective variety $X$.  { The presheaves represented by Chow varieties are
closely related to the Suslin-Voevodsky presheaves $z_{equi}(X,r)$ and $z(X,r)$.  
To extend our results to quasi-projective $X$, we employ the technology developed by Suslin and Voevodsky for sheaves for the
cdh-topology. }

{For a scheme $X$,} 
the Suslin-Voevodsky cdh-sheaf $z(X,r)$ on $(Sch/k)_{cdh}$ sends a $k$-scheme $U$
to  the abelian group of $U$-relative cycles on $U \times X$ (of relative dimension $r$) with well-defined 
specializations and universally integral coefficients \cite[Lemma 3.3.9]{SV}.   {If $k$ admits resolution of singularities,}
this sheaf has the 
important {\it localization property} (see \cite[Thm.~4.3.1]{SV}, \cite[Remark 5.10]{FV}): 
if $Y \into X$ is closed with Zariski open complement $U$, then the triple of simplicial abelian groups
$$z(Y,r)(\bu) \ \to \ z(X,r)(\bu) \ \to \ z(U,r)(\bu)$$
yields a long exact sequence of homotopy groups, where $\cal{F}(\bu) \equiv \cal{F}(\Deltadot)$ is the simplicial abelian
group whose associated chain complex is (by definition) the Suslin complex of $\cal{F}$.

Assume now that $X$ is projective and consider the  subsheaf 
$z^{eff}(X,r) \ \subset \ z(X,r)$ whose value on $U$ is the monoid of those $U$-relative cycles which are effective.
When $\charct(k) = 0$, cycles in $z^{eff}(X,r)(U)$ can be identified with the 
graphs of homomorphisms from the semi-normalization of $U$ into the Chow monoid $\cC_r(X)$,
\begin{equation}
\label{eq:eff}
z^{eff}(X,r)(U) \ \simeq \Hom(U^{sn},\cC_r(X)) \ \simeq \Hom(U^{sn}, {\cC_r(X)}^{sn}), \quad \charct(k) = 0;
\end{equation}
this is the h-representability of the sheaf $z^{eff}(X,r)$ \cite[Cor.~4.4.13]{SV} and the fact that $z^{eff}(X,r) \to z^{eff}(X,r)_h$ and $z(X,r) \to z(X,r)_h$ are isomorphisms in characteristic zero \cite[Thm.~4.2.2]{SV}.

In arbitrary characteristic, the h-sheafifications may be computed using continuous algebraic maps (\cite[4.1]{F94}, \cite[Cord.~4.4.13]{SV}), so that
$$z^{eff}(X,r)_h(U) \ \simeq \Hom(U,\cC_r(X))_h \ \simeq  \Hom_{\contalg}(U, \cC_r(X)).$$
(This h-sheafification admits a description as a limit of morphisms of schemes even though in positive characteristic the object h-representing a sheaf is not unique; see \cite[Prop.~3.2.11]{VHomSch}.)  Notice also that $p$ is invertible in $\Hom_{\contalg}(U, \cC_r(X))$ if $U$ is equidimensional, since then the relative Frobenius $F_{U/k} : U \to U^{(1)}$ is generically flat of degree $p^{\dim_k(U)}$, and the continuous algebraic map $U^{(1)} \xleftarrow{F_{U/k}} U \xrightarrow{f} \cC_r(X)$ corresponds to $1/ p^{\dim_k(U)} \cdot f$.  Since $z^{eff}(X,r)[1/p]$ and $z(X,r)[1/p]$ are h-sheaves \cite[Thm.~4.2.2]{SV}, this implies $z^{eff}(X,r)_h(U)[1/p] \ \simeq  \Hom_{\contalg}(U, \cC_r(X))$ for $U \in Sm / k$.  

The presheaf $z^{eff}(X,r)$ admits a reasonable description in terms of Chow varieties before inverting $p$.
If $X$ is projective and $\charct (k) = p > 0$, then for $U$ smooth and quasi-projective, the subgroup
$z^{eff}(X, r) (U) \subset \Hom(U, \cC_r(X))$  consists of those morphisms $f : U \to \cC_r(X)$ 
such that, for every generic point $\eta \in U$, the cycle 
classified by $f(\eta)$ is defined over $k(f(\eta))$; in general, the cycle classified
 by $f(u)$ is defined over a finite radicial extension of $k(f(u))$ \cite[Prop.~2.3]{FV}.  For details on the concept of field of definition of a cycle, we refer the reader to \cite[Defn.~I.3.1.7]{Kol}; this reference also contains examples of what can go wrong in positive characteristic (see for example \cite[Ex.~I.4.1.1]{Kol}).

\medskip
Now we consider possibly ineffective cycles, retaining the hypothesis that $X$ be projective.
A subtlety arises in comparing the presheaf $z^{eff}(X,r)^+$ to the sheaf $z(X,r)$, one that arises
because not every element of $z(X,r)(U)$ is a difference of elements of $z^{eff}(X,r)(U)$.  
In general, there is an intermediate presheaf 
$$ z^{eff}(X,r)^+ \ \subset \ z_{equi}(X,r) \ \subset z(X,r)$$ 
consisting of $U$-relative cycles on $X$ each component of which has relative dimension $r$.
Examples show that $z_{equi}(X,r)(U)$ can strictly contain  $z^{eff}(X,r)^+(U)$ and be strictly
contained in $z(X,r)(U)$.  Nevertheless, by \cite[Cor.~3.4.4]{SV} we have 
$$ z^{eff}(X,r)^+(U) \ = z_{equi}(X,r)(U), \quad U \ \text{geometrically unibranch}.$$ 
Consequently, 
$$z_{equi}(X,r)_{|_{(Sm/k)}} \ \subseteq \ (\Hom(-, \cC_r(X))^+)_{|_{(Sm/k)}}$$
with equality if $\charct(k) = 0$ and with image consisting of morphisms satisfying the field of definition condition described above
if $\charct(k)= p > 0$.

The cycle sheaves $z(X,r)$ and $z(X,r)_h$ for $X$ projective can be described in terms of continuous algebraic maps 
to the group completion $\cZ_r (X) := \cC_r(X)^2/R$ of the Chow monoid $\cC_r(X)$; here, $R$ is the usual relation
\ $(V,W) \sim (V^\prime,W^\prime)$ if and only if $V + W^\prime = W + V^\prime$ (\cite[4.1]{F94}, \cite[Prop.~4.4.15]{SV}).  
We remind the reader that a continuous algebraic map to the group completion is (up to a bicontinuous algebraic map) 
a pair of \textit{rational} maps to the Chow monoids which induces a well-defined set-theoretic map (on $\ol k$-points) 
to $\cZ_r(X_{\ol k})$.  This permits fibers of dimension $>r$ (which may occur outside the domains of definition of the rational maps) 
to cancel.  Then as sheaves on $(Sch/k)$, we have (\cite[4.1]{F94}, \cite[Prop.~4.4.15]{SV}):
$$z(X,r) \ = \ z(X,r)_h \ = \  \Hom_{\contalg}(-,\cZ_r (X)), \quad \charct(k) = 0$$
\vspace{-.4cm}
$$z(X,r)[1/p] \ = \ z(X,r)_h \ = \ \Hom_{\contalg}(-, \cZ_r (X))[1/p], \quad \charct(k) = p.$$

{By the above, we have $\Hom_{\contalg}(U, \cZ_r (X))[1/p] =  \Hom_{\contalg}(U, \cZ_r (X))$.}
Furthermore, for $ \charct(k) = p$ and $U \in Sm / k$, the image $z(X,r)(U) \ \subset \ \Hom_{\contalg}(U,\cZ_r(X))$ consists of those 
continuous algebraic maps $U \to \cZ_r(X)$ which
are induced by a pair of morphisms from an open dense subset of $U$ (i.e., the bicontinuous algebraic map is an isomorphism), 
both of which satisfy the field of definition condition.

\medskip 
What ties all this together is a fundamental acyclicity result of Voevodsky \cite[Thm.~5.5(2)]{FV} which asserts 
that the map of presheaves $z_{equi}(X,r) \to z(X,r)$ induces a quasi-isomorphism on associated Suslin 
complexes provided $k$ admits resolution of singularities.  Thus, the localization property for $z(X,r)$
can be ``transported" to presheaves ``represented" by Chow varieties.  In positive characteristic, the same is true after
inverting the characteristic.  Using O.~Gabber's theorem on the existence of smooth alterations of degree prime to $\ell$ \cite[1.3]{Gabber1}, \cite[3.2.1]{Gabber2}, recent work of S.~Kelly establishes that if $k$ is a perfect field of exponential characteristic $p$, and $\cal{F}$ is a presheaf with transfers on $Sch / k$
such that $\cal{F}_{cdh}[1 / p] = 0$, then the Suslin complex $C_* (\cal F) [ 1 / p]$ is acyclic \cite[Thm.~5.3.1]{Kelly}.
Thus our methods apply to an arbitrary infinite perfect field $k$ once perversity homology and cohomology groups are tensored with $\bZ[1/p]$.

 In Section \ref{sec:perverse}, we consider a ``bivariant" version of these constructions.  Namely, we
 consider  quasi-projective varieties $X, \ Y$ of pure dimension $d, n$.  We have subpresheaves and subsheaves 
 $$z^{t, eff}(X,Y) \ \subset \ z^{eff}(X\times Y,d+n-t), \quad z^{t}(X,Y) \ \subset z(X\times Y,d+n-t)$$
 which guide us to various ``cohomological" theories on $X$ (taking $Y$ to be projective space).
 
 This paper is concerned with versions of these presheaves and sheaves for stratified varieties 
 and a given perversity.  Thus, the presheaves and sheaves we consider will be elaborations
 of the ones mentioned above, taking into account the stratification and perversity.

\section{Motivic theories with perversity}
\label{sec:two}

\textbf{Stratifications and perversities.}
We assume $X$ and $Y$ are equidimensional $k$-schemes of dimension $d$ and $n$ respectively.

A \textit{stratified variety} is a variety $X$ equipped with a filtration by 
closed subsets $X^d \into X^{d-1} \into \cdots \into X^2 \into X^1 \into X$ such that $X^i$ has 
codimension at least $i$ in $X$.
If $X$ and $Y$ are stratified, we say $f: Y \to X$ is a \textit{stratified morphism}
if $f$ is a morphism of schemes such that $f(Y^i) \subseteq X^i$ for all $i$.  
A \textit{perversity} is a non-decreasing sequence of integers 
$p_1, p_2, \ldots, p_d$ such that $p_1=0$ and, for all $i$, $p_{i+1}$ equals either $p_i$ or 
$p_i+1$.  Perversities are denoted $\ol{p}, \ol{q}$, etc.  The perversities we consider range 
from the zero perversity $\ol{0}$ with $p_i=0$ for all $i$, to the top perversity $\ol{t}$, 
with $p_i=i-1$ for all $i$.  Our convention differs from that of Goresky-MacPherson \cite[1.3]{GM1} since over the complex numbers
our strata always have even real dimension; our $p_i$ corresponds to their $p_{2i}$.

Let $Z_r(X)$ denote the group of $r$-dimensional algebraic cycles on $X$.  Suppose $X$ is stratified.
We say an $r$-cycle $\alpha$ is \textit{of perversity $\ol{p}$} (or satisfies the perversity condition 
$\ol{p}$) if for all $i$, the dimension of the intersection $|\alpha| \cap X^i$ is no larger than 
$r-i +p_i$.  {When the codimension of $X^i$ in $X$ is exactly $i$,}
the perversity of a cycle measures its failure to meet properly the closed sets 
occurring in the stratification of $X$.  Let $Z_{r, \ol{p}}(X) \subset Z_r(X)$ denote the 
group of $r$-dimensional cycles of perversity $\ol{p}$ on the stratified variety $X$.  
Often, $X^1$ is taken to be the  singular locus of $X$, and then the condition $p_1=0$ means 
that no component of the cycle is contained in the singular locus.

Since elements of $z (X,r)(U)$ are required to have well-defined specializations for $u \in U$, 
we may define subpresheaves by imposing incidence conditions on the fibers over all $u \in U$.  
Let $T$ be an irreducible locally closed subset in $X$ and $p$ an integer.
 For $U \in Sch /k $, we define $z(X,r)_{T,p}(U) \subseteq z(X,r)(U)$ to be the subgroup of $U$-relative 
cycles $\alpha \into  U \times X$ satisfying the additional condition that, for all $u \in U$, 
the intersection of the support of $\alpha_u$ with $T_{u}$ in $X_{u}$ has excess at most $p$.  
This condition is topological, hence insensitive to the field of definition of the various $\alpha_u$'s. 

If $f: U' \to U$ is a morphism in $Sch / k$ and $\alpha \in z(X,r)(U)$ is a cycle, then for 
all $u' \in U'$, by functoriality the cycle ${(f^* \alpha)}_{u'}$ coincides with the 
cycle ${(\alpha_u)}_{u'}$, where $f(u') = u$.  Since the morphism $f_{u'}: \spec (k(u')) \to \spec (k(u))$ 
is universally open, by \cite[Lemma 3.3.8(1)]{SV} the support of ${(\alpha_u)}_{u'}$ 
is the base change via $f_{u'}$ of the support of $\alpha_u$.  
Therefore the assignment $U \mapsto z(X,r)_{T,p}(U)$ defines a presheaf
$z(X,r)_{T,p}(-) \subseteq z(X,r)(-)$.
The behavior of supports under base change also implies that if $f : U' \to U$ is an h-cover and $\alpha \in z(X,r)(U)$ satisfies $f^* \alpha \in z(X,r)_{T, p}(U')$, then $\alpha \in z(X,r)_{T, p}(U)$.  Therefore $z(X,r)_{T, p} \subseteq z(X,r)$ is a cdh-subsheaf.

Similarly we may define a sheaf 
\begin{equation}
\label{tp}
z(X,r)_{ \mathcal T, p}(-) \ \subseteq  \ z(X,r)(-)
\end{equation}
where $\cal T$ is a collection of irreducible locally closed subsets  of $X$
and $p$ is a $\bb{Z}_{\geq 0}$-valued function on $\cal T$: we require
 the excess with $T \in \cal T$ to be bounded by $p(T)$.
We refer to such a pair $(\cal T, p)$ as an \textit{incidence datum} on $X$.
The equidimensional version is denoted $z_{equi}(X,r)_{\mathcal T, p}$.
 If $(\cal T, p)$ and $(\cal T', p')$ are incidence data with $\cal T \subseteq \cal T'$ and $p' |_{\cal T} \leq p$,
 there is a canonical presheaf inclusion $z(X,r)_{ \mathcal T', p'} \subseteq z(X,r)_{ \mathcal T, p}$.
We say an incidence datum $(\cal T, p)$ is \textit{finite} if $\cal{T}$ consists of finitely many elements.

If $X$ is stratified and $\ol{p}$ is a perversity, we denote by $z_{equi}(X,r)_{\ol p}$ and 
$z(X,r)_{\ol p}$ the subpresheaves of $z(X, r)$ consisting of cycles whose excess intersection 
with (each component of) $X^i$ is bounded by $p_i$ (for all $i$).  
In other words, if $\cal T$ is the set of irreducible components of strata of the stratified variety, then $z(X,r)_{\ol p} = z(X, r)_{\cal T, p}$ for the function $p$ taking value $p_i$ on every component of $X^i$ (for every $i$).  
Put differently, $\alpha \in z(X, r)(U)$ 
belongs to $z(X,r)_{\ol p}(U)$ if for all $u \in U$, the specialization $\alpha_u$ belongs 
to $Z_{r, \ol p}(X_{u})$.

\begin{lemma} The cdh-sheafification of $z_{equi}(X, r)_{\mathcal T, p}$ is $z(X,r)_{\mathcal T, p}$.  
Therefore,  
$$   z(X,r)_{\ol p} \ \cong \ (z_{equi}(X,r)_{\ol p})_{cdh}.$$
\end{lemma}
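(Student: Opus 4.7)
The strategy is to deduce the lemma from the corresponding unstratified statement. The key input, which will be invoked from the Suslin--Voevodsky theory of relative cycles (essentially Raynaud--Gruson platification applied to $U$-relative cycles component by component), is that the cdh-sheafification of the presheaf $z_{equi}(X,r)$ is $z(X,r)$: given any $\alpha \in z(X,r)(U)$, after a suitable blow-up $U' \to U$ the strict transforms of the components of $\alpha$ become equidimensional of relative dimension $r$ over $U'$, and abstract blow-ups are cdh-covers.

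The first step is to verify that $z(X,r)_{\mathcal T, p}$ is itself a cdh-sheaf. Since $z(X,r)$ is a cdh-sheaf by Suslin--Voevodsky, it suffices to check that the presheaf inclusion $z(X,r)_{\mathcal T, p} \subseteq z(X,r)$ is closed under cdh-descent, i.e., if $\{U_i \to U\}$ is a cdh-cover and $\alpha \in z(X,r)(U)$ has each restriction $\alpha|_{U_i}$ lying in $z(X,r)_{\mathcal T, p}(U_i)$, then $\alpha$ itself satisfies the incidence condition. This was essentially established in the excerpt for h-covers via the base-change behaviour of supports \cite[Lemma 3.3.8(1)]{SV}, and cdh-covers are h-covers; alternatively, the condition is checked pointwise on $U$ and any $u \in U$ lifts (modulo a residue field extension) to some $u_i$ in the cover.

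The second step is to show that the inclusion $z_{equi}(X,r)_{\mathcal T, p} \hookrightarrow z(X,r)_{\mathcal T, p}$ exhibits the target as the cdh-sheafification of the source. Injectivity of the induced map $(z_{equi}(X,r)_{\mathcal T, p})_{cdh} \to z(X,r)_{\mathcal T, p}$ is automatic since cdh-sheafification is exact and so preserves monomorphisms of presheaves. For local surjectivity, fix $\alpha \in z(X,r)_{\mathcal T, p}(U)$. By the key input $(z_{equi}(X,r))_{cdh} = z(X,r)$, there is a cdh-cover $\pi : U' \to U$ such that $\pi^*\alpha \in z_{equi}(X,r)(U')$. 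Because $z(X,r)_{\mathcal T, p}$ is a cdh-sheaf (Step 1), $\pi^*\alpha$ still satisfies the incidence condition, hence belongs to the intersection $z_{equi}(X,r)(U') \cap z(X,r)_{\mathcal T, p}(U') = z_{equi}(X,r)_{\mathcal T, p}(U')$, giving the required cdh-local lift.

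The second assertion then follows immediately by specialising the incidence datum: take $\mathcal T = \{X^1, \ldots, X^d\}$ and $p(X^i) = p_i$, so that $z(X,r)_{\mathcal T, p} = z(X,r)_{\ol p}$ and $z_{equi}(X,r)_{\mathcal T, p} = z_{equi}(X,r)_{\ol p}$ by the pointwise definition of the perversity condition recalled just before the lemma. The main subtlety is the input $(z_{equi}(X,r))_{cdh} = z(X,r)$, which is where the bulk of the Suslin--Voevodsky machinery is used; everything else is formal manipulation of subsheaves.
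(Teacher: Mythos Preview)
Your proposal is correct and follows essentially the same approach as the paper: invoke the unstratified result $(z_{equi}(X,r))_{cdh} = z(X,r)$ from \cite[Thm.~4.2.9]{SV}, then observe that the incidence condition is preserved under pullback along a cdh-cover because supports of specializations base-change well. The paper's proof is terser (it takes for granted that $z(X,r)_{\mathcal T,p}$ is a cdh-sheaf, this having been established in the paragraph immediately preceding the lemma, and does not spell out injectivity), but the logical content is the same.
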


\begin{proof} The cdh-sheafification of $z_{equi} (X ,r)$ is  $z (X ,r)$ \cite[Thm.~4.2.9]{SV}, so any cycle $\alpha \in  z(X,r)_{\mathcal T, p}(U)$ belongs to $z_{equi}(X,r)(U')$ for some cdh cover $p : U' \to U$.  Since the support of 
$\alpha_{u'}$ coincides with that of $\alpha_{p(u')}$,
in fact the base change of $\alpha$ lies in $z_{equi} (X, r)_{\mathcal T, p}(U')$.  \end{proof}

We prove two elementary functoriality properties for $X \mapsto z(X,r)_{\ol p}$.
We remark that proper push-forward is defined only under restrictive conditions;  
since
disjoint closed sets in the source of a morphism may have images which intersect,
the push-forward of a perversity cycle via a stratified morphism need not satisfy the same perversity condition.
In the following proposition, the perversity $\ol p * \ol c$ captures the interaction between the perversity of the cycle and the perversity describing the behavior of the stratification under the morphism.

\begin{prop}
Let $f: W \to X$ be a flat, stratified morphism of relative dimension $e$.  
Then for any perversity $\ol p$ and any $r \geq 0$, $f$ induces maps of {(pre)sheaves} 
$$f^*: z(X,r)_{\ol p} \to z(W,r+e)_{\ol p}, \quad f^*: z_{equi}(X,r)_{\ol p} \to z_{equi}(W,r+e)_{\ol p}.$$

If $f: W \to X$ is a proper morphism with the property that $W^{i-c_i} = f^{-1}(X^i)$
for some perversity $\ol c$, then for any 
perversity $\ol p$ and any $r \geq 0$, $f$ induces maps of {(pre)sheaves}
$$f_*:  z(W,r)_{\ol p} \to z(X,r)_{\ol p * \ol c}, \quad f_*: z_{equi}(W,r)_{\ol p} \to z_{equi}(X,r)_{\ol p * \ol c},$$
where $\ol p * \ol c$ is the perversity with ${(\ol p * \ol c)}_i = p_{i-c_i} + c_i$.

{In particular, if $i: W \to X$ is a closed immersion and $i(W)$ meets each $X^i$ properly,
 then such proper push-forward maps exist for $i$ if we take
each $c_i $ equal to 0.}

The pull-back and push-forward operations are compatible.
\end{prop}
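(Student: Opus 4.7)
The key observation is that both operations are already well-defined at the level of the ambient Suslin--Voevodsky cycle (pre)sheaves $z(X,r)$ and $z_{equi}(X,r)$: flat pullback $f^*$ (for $f$ flat of constant relative dimension $e$) and proper pushforward $f_*$ exist and, crucially, commute with the formation of specializations,
\[
(f^*\alpha)_u = f_u^*\alpha_u, \qquad (f_*\beta)_u = f_{u*}\beta_u, \qquad u \in U,
\]
where $f_u : W_u \to X_u$ is the induced morphism on fibers. Since the perversity conditions defining $z(X,r)_{\ol p}$ are conditions on the fibers $\alpha_u$, each assertion reduces to a dimension count for ordinary cycles on $W_u$ and $X_u$. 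The task is to verify that these dimension counts go through.

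For flat pullback, $|f_u^*\alpha_u| = f_u^{-1}|\alpha_u|$, and flatness of $f_u$ of relative dimension $e$ gives $\dim f_u^{-1}(S) = \dim S + e$ for any closed $S \subseteq X_u$. The stratified morphism hypothesis $f(W^i) \subseteq X^i$ yields $W^i_u \subseteq f_u^{-1}(X^i_u)$, whence
\[
|f_u^*\alpha_u| \cap W^i_u \ \subseteq \ f_u^{-1}\!\bigl(|\alpha_u| \cap X^i_u\bigr),
\]
a set of dimension at most $(r - i + p_i) + e = (r+e) - i + p_i$, as required.

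For proper pushforward, $|f_{u*}\beta_u|$ is contained in $f_u(|\beta_u|)$ (closed by properness), and $f_u$ does not increase the dimension of a closed subset. Using the hypothesis $f^{-1}(X^i) = W^{i-c_i}$ on fibers, a short set-theoretic argument gives
\[
|f_{u*}\beta_u| \cap X^i_u \ \subseteq \ f_u\!\bigl(|\beta_u| \cap W^{i - c_i}_u\bigr),
\]
which has dimension at most $r - (i - c_i) + p_{i - c_i} = r - i + (\ol p * \ol c)_i$. A brief case analysis, splitting on whether $c_{i+1} = c_i$ or $c_{i+1} = c_i + 1$, shows that $\ol p * \ol c$ is indeed a perversity with $(\ol p * \ol c)_1 = 0$.

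For the closed immersion remark, stratify $W$ by $W^j := W \cap X^j$; proper intersection makes this of codimension $j$, so the hypothesis is satisfied with $c_j = 0$, and $\ol p * \ol c = \ol p$. Compatibility of $f^*$ and $f_*$ on perversity cycles is inherited from the standard flat base change formula on $z(X,r)$ and $z_{equi}(X,r)$, since both perversity subpresheaves sit inside these ambient objects and the compatibility is checked fiberwise. The only place requiring care is the commutation of the Suslin--Voevodsky operations with specialization, recorded in Section~\ref{sec:one}; once this is invoked, the whole proposition is a fiberwise dimension count plus the elementary verification that $\ol p * \ol c$ is a perversity.
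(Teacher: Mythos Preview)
Your argument is correct and follows essentially the same approach as the paper: reduce to the existing flat pullback and proper pushforward on the ambient Suslin--Voevodsky (pre)sheaves (the paper cites \cite[Lemma~3.6.4, Cor.~3.6.3, Prop.~3.6.5]{SV}), then check the perversity condition fiberwise via the set-theoretic identities $|f^*\beta|\cap f^{-1}(T)=f^{-1}(|\beta|\cap T)$ and $|f_*\alpha|\cap X^i = f(|\alpha|\cap W^{i-c_i})$. Your version is slightly more detailed (explicitly verifying that $\ol p*\ol c$ is a perversity and spelling out the closed immersion case), but the content is the same.
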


\begin{proof} The existence statements for the presheaves with no perversity condition are \cite[Lemma 3.6.4]{SV} (flat pull-back) and \cite[Cor.~3.6.3]{SV} (proper push-forward).
Therefore the first assertion follows from the observation that
for any locally closed subset $T \subset X$, any flat map $f: W \to X$, and any $r$-cycle $\beta$ on $X$,
we have that $|f^*(\beta)| \cap f^{-1}(T) \ = \ f^{-1}(|\beta|\cap T).$  
The second assertion follows from the observation that, for any $r$-cycle $\alpha$ on $W$, we have $\dim ( |f_* (\alpha)| \cap X^i ) \leq r -i +c_i + p_{i-c_i}$  
since $f ( |\alpha| \cap W^{i- c_i}) = |f_*(\alpha) | \cap X^i$.

The flat pull-back and proper push-forward transformations are compatible \cite[Prop.~3.6.5]{SV}.
\end{proof}

\vskip .2in

\textbf{Motivic homology theories.}
The algebraic $n$-simplex is the affine variety $\spec (k[x_0,\ldots,x_n]/\sum_i x_i - 1)$ and is denoted by $\Delta^n$.  The schemes $\Delta^n$ fit together into a cosimplicial scheme $\Deltadot$.  
If $\cF$ is an abelian presheaf on $Sm / k$, we denote by $\cF(\bullet)$ the simplicial abelian group obtained by evaluation at $\Deltadot$.  For example, $z(X,r)_{\ol p}(\bullet)$ denotes the simplicial abelian group whose abelian group of $n$-simplices is $z(X,r)_{\ol p}(\Delta^n)$.
We denote by $C_*(\cF)$ (the ``Suslin complex" of $\cF$) the normalized chain complex of $\cF(\bu)$;
thus, $\pi_i(\cF(\bu)) = H_i(C_*(\cF))$.

{For $n \in \bb{Z}, r \in \bb{Z}_{\geq 0}$, the Borel-Moore motivic homology $H_n^{BM}(X, \bb{Z}(r))$ of $X \in Sch / k$ is the homology in degree $n-2r$ of the complex $C_* (z(X,r))$; for $r < 0$, $H_n^{BM}(X, \bb{Z}(r))$ is the homology in degree $n-2r$ of $C_* (z(X \times \bb{A}^{-r}, 0))$ \cite[4.3, 9.1]{FV}.  This motivates the following definition.
\begin{defn} \label{defn:pervhom}
The perversity $\ol p$ (Borel-Moore) motivic homology of a stratified variety $X$, written $H_n^{\ol p}(X, \bb{Z}(r))$, is the homology in degree $n-2r$ of the complex $C_* (z(X,r))_{\ol p}$. Equivalently, $H_n^{\ol p}(X, \bb{Z}(r)) \equiv \pi_{n-2r} (z(X,r)_{\ol p}(\bu))$.  
\end{defn}
The group $H_{2r}^{BM}(X, \bb{Z}(r))$ is the Chow group $A_r(X)$ of $r$-dimensional cycles on $X$. The group $H_{2r}^{\ol p}(X, \bb{Z}(r))$ admits a similar description.}

\begin{prop}
\label{Arp}
Consider $W_0, \ W_1 \in Z_{r, \ol p}(X) = z(X,r)_{\ol p}(k)$.   The following are equivalent:
\begin{enumerate}
\item
$W_0, \ W_1$ determine the same element in $\pi_0(z(X,r)_{\ol p}(\bu)) = H^{\ol p}_{2r}(X, \bZ(r))$.
\item 
$W_0, \ W_1$ determine the same element in $\pi_0(z_{equi}(X,r)_{\ol p}(\bu))$.
\item
There exists an $(r+1)$-dimensional cycle $\cW \into X \times \bb{A}^1$ satisfying the following properties:

(i.) $\cW$ is flat over $\bb A^1$;

(ii.) for all $t \in \bb{A}^1$, $\cW_t \in Z_{r, \ol p}(X_t)$; and

(iii.) $W_0 = \cW \bu (X\times 0)$ and $W_1 = \cW \bu (X \times 1) $.
\item
There exists an \textit{effective} 
$(r+1)$-dimensional cycle $\cW \into X \times \bb{A}^1$ satisfying (i.) and (ii.), 
and a cycle $E \in Z_{r, \ol p}(X)$ such that  $\cW \bu (X\times 0) =W_0 + E$ and $\cW \bu (X \times 1) = W_1+ E$. 
\end{enumerate}
If $W_0, \ W_1$ satisfy these conditions, then we say that they are rationally equivalent as $r$-cycles of 
perversity $\ol p$, written $W_0 \sim_{\ol p} W_1$.  We denote by $A_{r, \ol p}(X)$ the 
quotient of $Z_{r, \ol p}(X)$ by the relation $\sim_{\ol p}$:
\begin{equation}
\label{pervChow}
A_{r, \ol p}(X) \ \equiv \ Z_{r, \ol p}(X)/\sim_{\ol p}.
\end{equation}
\end{prop}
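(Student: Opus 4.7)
The plan is to prove the equivalences by the cycle $(2) \Leftrightarrow (3) \Leftrightarrow (4)$ together with $(2) \Rightarrow (1)$ and finally $(1) \Rightarrow (2)$, which will be the main obstacle.

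For $(2) \Leftrightarrow (3)$, I would identify $\Delta^1 \cong \bA^1$ so that the face maps $d_0^*, d_1^*$ correspond to specializations at $t=1$ and $t=0$, and then unwind $\pi_0$ as $z_{equi}(X,r)_{\ol p}(\Delta^0)/\partial\, z_{equi}(X,r)_{\ol p}(\Delta^1)$. The equivalence then reduces to the observation that any element of $z_{equi}(X,r)_{\ol p}(\bA^1)$ is automatically flat over $\bA^1$: since $\bA^1$ is regular of dimension one, a cycle whose components are equidimensional of relative dimension $r$ is flat by miracle flatness, and conversely. Matching $d_0^*\cW = W_1$, $d_1^*\cW = W_0$ gives exactly (iii.), while (ii.) is the defining condition of the perversity subpresheaf.

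For $(3) \Rightarrow (4)$, I would decompose $\cW = \cW^+ - \cW^-$ into the effective sums of its components with positive and negative coefficients (each remaining flat, since flatness depends only on each component), and use the involution $\sigma : \bA^1 \to \bA^1$, $t \mapsto 1-t$, to symmetrize:
\[
 \cW_{\mathrm{eff}} := \cW^+ + \sigma^*(\cW^-), \qquad E := \cW^-_0 + \cW^-_1 .
\]
The cycle $\cW_{\mathrm{eff}}$ is effective and flat, and using $\cW^+_t = \cW_t + \cW^-_t$ one computes $\cW_{\mathrm{eff},0} = W_0 + E$ and $\cW_{\mathrm{eff},1} = W_1 + E$. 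Perversity of each fiber of $\cW_{\mathrm{eff}}$ (and hence of $E$) is deduced from perversity of $\cW$ at all fibers, using that $\cW^+$ and $\cW^-$ have disjoint support in $X \times \bA^1$ (so that generically $|\cW^{\pm}_t| \subseteq |\cW_t|$) together with a closedness argument at the special fibers where cancellation might occur. For $(4) \Rightarrow (3)$, I would set $\cW := \cW_{\mathrm{eff}} - p^*E$ where $p : X \times \bA^1 \to X$ is the projection; flatness is preserved and each fiber has support contained in $|\cW_{\mathrm{eff},t}| \cup |E|$, so perversity is preserved. The implication $(2) \Rightarrow (1)$ is the inclusion of presheaves $z_{equi}(X,r)_{\ol p} \subseteq z(X,r)_{\ol p}$.

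The main obstacle is $(1) \Rightarrow (2)$. Given $\cW \in z(X,r)_{\ol p}(\bA^1)$ with $\partial\cW = W_1 - W_0$, I would decompose $\cW = \cW_{\mathrm{dom}} + \cW_{\mathrm{fib}}$ by whether the components of $\cW$ dominate $\bA^1$: the dominant part $\cW_{\mathrm{dom}}$ lies in $z_{equi}(X,r)_{\ol p}$ but its boundary is shifted by contributions $A, B \in Z_{r, \ol p}(X)$ from the fiber-only components over $0$ and $1$. To produce an equidimensional cycle with the same boundary as $\cW$, I would invoke the Voevodsky quasi-isomorphism $C_*(z_{equi}(X,r)) \to C_*(z(X,r))$ recalled in Section \ref{sec:one}, and verify that its argument respects the perversity subpresheaves — the key technical point being that perversity is a fiberwise incidence condition preserved under the cycle-theoretic constructions (base change, taking closures of equidimensional parts) entering Voevodsky's proof.
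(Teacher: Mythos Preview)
Your treatment of $(2)\Leftrightarrow(3)$ and $(3)\Leftrightarrow(4)$ is essentially correct and matches the paper (which for $(3)\Leftrightarrow(4)$ simply cites \cite[Ex.~1.6.2]{Ful}; your explicit construction with the involution $\sigma$ is a valid unpacking of that). One small simplification for $(3)\Rightarrow(4)$: you don't need any ``closedness argument at special fibers.'' Since $\cW^+$ and $\cW^-$ share no components in $X\times\bA^1$, we have $|\cW^+|\cup|\cW^-|=|\cW|$ as subsets of $X\times\bA^1$, so $|\cW^\pm|\cap(X^i\times\{t\})\subseteq|\cW|\cap(X^i\times\{t\})$ for every $t$; flatness identifies this with $|\cW_t|\cap X^i$, and the perversity bound follows.

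The real issue is your handling of $(1)\Rightarrow(2)$. You call it the ``main obstacle'' and invoke Voevodsky's quasi-isomorphism $C_*(z_{equi}(X,r))\to C_*(z(X,r))$, which would drag in resolution of singularities for a statement that needs no such hypothesis. The paper's argument is a one-liner: relative cycles over a smooth base of dimension $\leq 1$ are automatically equidimensional, so $z(X,r)_{\ol p}(\Delta^n)=z_{equi}(X,r)_{\ol p}(\Delta^n)$ for $n=0,1$, and $\pi_0$ depends only on these. Your decomposition $\cW=\cW_{\mathrm{dom}}+\cW_{\mathrm{fib}}$ rests on a misconception: by the very definition of a relative cycle \cite[Def.~3.1.3]{SV}, every component lies over a generic point of the base, so over the irreducible base $\bA^1$ there are no ``fiber-only'' components and $\cW_{\mathrm{fib}}=0$. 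You already use the relevant flatness observation in your $(2)\Leftrightarrow(3)$ step; you just need to apply it once more to see that $z(X,r)_{\ol p}(\bA^1)=z_{equi}(X,r)_{\ol p}(\bA^1)$.
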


\begin{proof}
The equivalence of (1) and (2)  follows from the observation
 that relative cycles are automatically flat (hence equidimensional) over a smooth base of dimension $\leq 1$.
 
To show the equivalence of the second and third conditions, observe that elements of $z_{equi}(X,r)_{\ol p}(\Delta^1)$ 
are in bijective correspondence with $(r+1)$-dimensional cycles $\cW \into X \times \bb{A}^1$ satisfying 
 the conditions (i.) and (ii.) of the third condition.  
 
The equivalence of the third and fourth conditions is essentially verified in  \cite[Ex.~1.6.2]{Ful}.
\end{proof}

Forgetting the stratification of $X$ determines a group homomorphism from $A_{r, \ol p}(X)$ to rational equivalence
classes of $r$-cycles on $X$, 
$A_{r, \ol p}(X) \to A_r(X)$,  which need not be injective or surjective.

\vskip .1in

The following proposition establishes a perverse cycle class map from our perversity $\ol p$ Chow group to the Goresky-MacPherson group.  We {ignore} a slight notational conflict; our $p_i$ corresponds to $p_{2i}$ in the Goresky-MacPherson convention.  We use the geometric model for intersection homology as developed in \cite[1.3]{GM1}: instead of considering the usual complex of (locally finite) chains, one considers the subcomplex of chains whose excess intersection with the strata is controlled by $\ol p$, and with boundary satisfying a similar condition.  The homology groups of this complex are the intersection homology groups of perversity $\ol p$; these turn out to be independent of the stratification, as established via the sheaf-theoretic approach in \cite[\S 4, Cor.~1]{GM2}. 

Our original hope was to define purely algebro-geometrically a pairing $A_{r, \ol p} (X) \times A_{s, \ol q}(X) \to A_{r+s-d, \ol p + \ol q}(X)$ which agrees with the Goresky-MacPherson pairing via the perverse cycle class map.  The construction of such a pairing, {and the study of the dependence of our groups on the stratification,} seem to require additional geometric input.

\begin{prop}
\label{prop:GM} Let $X$ be a stratified variety of dimension $d$ over $\bb{C}$, and suppose the stratification is sufficiently fine to compute the intersection homology groups $IH^{\ol p}_*(X)$.  Let $A_{r, \ol p}(X)$ {(\ref{pervChow})} denote the perversity $\ol p$ Chow group with respect to the same stratification.  Then there is a {canonical} perverse cycle class map
$$A_{r, \ol p}(X) \to IH^{\ol p}_{2r}(X, \Z).$$
\end{prop}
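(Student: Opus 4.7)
The plan is to send each algebraic cycle of perversity $\ol p$ to its fundamental Borel-Moore class on the complex analytic space $X(\bb C)$, verify that this class lies in the Goresky-MacPherson intersection chain complex of perversity $\ol p$, and use the flat-family description of $\sim_{\ol p}$ from Proposition \ref{Arp}(3) to produce an intersection-chain homology between representatives of $\sim_{\ol p}$-equivalent cycles.

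\textbf{Step 1: Map on cycles.} For $W = \sum n_\alpha W_\alpha \in Z_{r, \ol p}(X)$, let $[W] \in C^{lf}_{2r}(X(\bb C); \Z)$ be the weighted sum of the canonical fundamental classes of the closed analytic subvarieties $|W_\alpha| \subseteq X(\bb C)$. As each $|W_\alpha|$ has pure complex dimension $r$, the class $[W]$ is a Borel-Moore cycle ($\partial[W] = 0$). The perversity hypothesis on $W$, read in real dimensions, gives
\[
\dim_{\bb R}(|W(\bb C)| \cap X^i(\bb C)) \ \leq \ 2r - 2i + 2p_i,
\]
which is the Goresky-MacPherson allowability condition on a real $2r$-chain, under the translation of perversity conventions indicated in the paragraph preceding this proposition. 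Since $\partial[W] = 0$, boundary allowability is automatic. Thus $[W]$ represents a class in $IH^{\ol p}_{2r}(X; \Z)$, and $W \mapsto [W]$ is additive.

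\textbf{Step 2: Descent through $\sim_{\ol p}$.} Given $W_0 \sim_{\ol p} W_1$, Proposition \ref{Arp}(3) supplies a flat family $\cW \in z_{equi}(X, r)_{\ol p}(\bb{A}^1)$ with $\cW|_{t=0} = W_0$, $\cW|_{t=1} = W_1$, and $\cW|_t \in Z_{r, \ol p}(X)$ for all $t$. Choose a smooth embedded path $\gamma : [0,1] \hookrightarrow \bb{A}^1(\bb C)$ from $0$ to $1$. Let $\Xi_\gamma$ be the pullback of $|\cW|$ under $\id_X \times \gamma$, regarded as a locally finite subanalytic $(2r+1)$-chain on $X(\bb C) \times [0,1]$ with multiplicities inherited fiberwise from $\cW$; push forward along the proper projection $X(\bb C) \times [0,1] \to X(\bb C)$ to obtain $\xi_\gamma \in C^{lf}_{2r+1}(X(\bb C); \Z)$. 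A fiberwise computation using flatness of $\cW$ gives $\partial \xi_\gamma = [W_1] - [W_0]$, and the uniform fiberwise perversity bound yields
\[
\dim_{\bb R}(|\xi_\gamma| \cap X^i(\bb C)) \ \leq \ 2r + 1 - 2i + 2p_i,
\]
the allowability condition for a $(2r+1)$-chain; boundary allowability is supplied by Step 1. Thus $\xi_\gamma$ witnesses $[W_0] = [W_1]$ in $IH^{\ol p}_{2r}(X; \Z)$, so the assignment $W \mapsto [W]$ descends to a well-defined homomorphism $A_{r, \ol p}(X) \to IH^{\ol p}_{2r}(X; \Z)$.

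\textbf{Main obstacle.} The technical crux is Step 2: one must handle the possible non-properness of $\cW \to \bb{A}^1$ by restricting to the compact arc $\gamma$, verify that the resulting subanalytic set admits a canonical locally finite $(2r+1)$-chain structure with the expected boundary, and check that the fiberwise perversity bound assembles into the chain-level allowability bound in degree $2r+1$. Translating between our complex-dimension perversity and the real-dimension Goresky-MacPherson perversity is secondary bookkeeping, but must be stated consistently before applying the allowability criterion.
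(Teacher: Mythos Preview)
Your proof is correct, but the route to well-definedness differs from the paper's. Both arguments begin identically: an algebraic perversity-$\ol p$ cycle yields an allowable intersection chain via its fundamental class. For the descent through $\sim_{\ol p}$, you stay over $\bb A^1$, restrict the flat family $\cW$ to a real arc $\gamma\subset\bb A^1(\bb C)$, and build an explicit $(2r{+}1)$-dimensional subanalytic chain whose boundary is $[W_1]-[W_0]$; the fiberwise perversity bound then gives allowability in degree $2r{+}1$. The paper instead compactifies to $X\times\bb P^1$, observes that $\cW$ represents a class in $IH^{\ol p}_{2r+2}(X\times\bb P^1)$, and applies the already-established intersection pairing $H^2\otimes IH^{\ol p}_{2r+2}\to IH^{\ol p}_{2r}$ together with $[X\times 0]=[X\times\infty]$ in $H^2$ and the push-forward/projection functoriality of $IH$. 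Your approach is more elementary and self-contained, requiring no pairing or functoriality from the $IH$ side, at the cost of the chain-level technicalities you correctly flag (subanalytic triangulation of the swept family, the boundary computation using flatness). The paper's approach trades that geometric work for an appeal to structural properties of intersection homology, which keeps the argument short but less direct.
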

\begin{proof} If $\alpha$ is an algebraic cycle in $Z_{r, \ol p}(X)$, then a triangulation of $\alpha$ determines a cycle in the intersection chain complex.  It suffices to show that if $\alpha \sim_{\ol p} \alpha'$, then the difference $\alpha - \alpha'$ goes to zero in $IH^{\ol p}_{2r}(X, \Z)$.
If $\alpha \sim_{\ol p} \alpha'$, then there exists a cycle $\mathcal{W}$ on $X \times \PP^1$ such that $\mathcal{W}_0 = \alpha + E$ and $\mathcal{W}_1 = \alpha' + E$, with $\alpha, \alpha', E \in Z_{r, \ol p}(X)$.

We equip $X \times \PP^1$ with the  stratification given by pulling back the stratification of $X$.  
We claim $\mathcal{W}$ determines a class in $IH^{\ol p }_{2r+2}(X \times \PP^1, \Z)$.  
This follows from the observation that if $Y \into X$ is a Cartier divisor, $\beta$ is an $(r+1)$-dimensional 
cycle on $X$ not contained in $Y$, $T \into X$ is closed, and the $r$-cycle $\beta \cap Y$ has 
excess $\leq e$ with $T \cap Y \into Y$ in $Y$, then $\beta$ itself has excess $\leq e$ with $T \into X$.

We utilize the intersection pairing 
$$H^2(X \times \PP^1, \Z) \times IH^{\ol p }_{2r+2}(X \times \PP^1, \Z) \to IH^{\ol p}_{2r}(X \times \PP^1, \Z) .$$
The pair $(X \times 0, \cal{W})$ intersects properly in each stratum $X^i \times \PP^1$ since $X^i \times 0$ does not contain $\cal{W} \cap (X^i \times \PP^1)$, and the same holds for the pair $(X \times \infty, \cal{W})$.  Therefore the product $[X \times 0 ] \cdot [\cal W]$ is represented by the class of $\cal{W}_0$, and similarly $[X \times \infty ] \cdot [\cal W] = [\cal{W}_\infty]$.  The divisors $X \times 0, X \times \infty \into X \times \PP^1$ determine the same class in $H^2(X \times \PP^1, \Z)$, so $[\cal{W}_0] = [\alpha + E] = [\alpha' + E] = [\cal{W}_\infty] \in IH^{\ol p}_{2r}(X \times \PP^1, \Z)$, hence $[\alpha] - [\alpha'] = 0 \in IH^{\ol p}_{2r}(X \times \PP^1, \Z)$.

There are push-forward morphisms $0_*, \infty_* : IH^{ \ol p}_{2r}(X, \Z) \to IH^{\ol p}_{2r}(X \times \PP^1, \Z)$ and a projection morphism $p_* : IH^{\ol p}_{2r}(X \times \PP^1, \Z) \to IH^{\ol p}_{2r}(X, \Z)$ \cite[Proof of Prop.~2.1]{GFried}.  Both $0$ and $\infty$ are sections to $p$, so  $p_* \circ 0_*$ and $p_* \circ \infty_*$ are both the identity, and this completes the proof. \end{proof}

{\begin{remark} For $X$ projective, C.~Flannery constructed a morphism from the homotopy groups of the space of algebraic cycles of some perversity (i.e., semi-topological intersection homology groups), to the Goresky-MacPherson groups \cite{Flannery}. \end{remark}}

\vskip .2in

\textbf{Applications of Voevodsky acyclicity.}
We now use Voevodsky's results {on the cohomology of }pretheories to relate $z(X,r)_{\ol p}(\bu)$ and $z_{equi}(X,r)_{\ol p}(\bu)$.  {A pretheory is a presheaf equipped with push-forward maps along relative divisors in relative smooth curves (over smooth bases).  }Here we show the subpresheaves defined by incidence data are in fact subpretheories.

\begin{lemma} \label{pretheory} Let $X$ be an equidimensional $k$-scheme, and let $(\cal T, p)$ be an incidence datum on $X$.
The subpresheaves $z_{equi}(X,r)_{\mathcal T, p}$ 
and $z(X,r)_{ \mathcal T, p}$ are subpretheories inside $z_{equi}(X,r)$ and $z(X,r)$.
\end{lemma}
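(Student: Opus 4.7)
The plan is to inherit the pretheory transfer maps on $z_{equi}(X,r)$ and $z(X,r)$ from Suslin-Voevodsky and verify directly that they preserve the incidence condition $(\mathcal T, p)$. The equidimensional case will then follow from the nonequidimensional one plus the fact that the Suslin-Voevodsky transfer preserves equidimensionality on $z_{equi}(X,r)$ by construction.

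Recall the SV construction of the transfer. Given $S \in Sm/k$, a smooth relative curve $\pi : C \to S$, and an elementary correspondence $D \in c(C/S, 0)$ (an integral closed subscheme of $C$, finite and surjective over a component of $S$), the pretheory transfer on $z(X,r)$ sends $\alpha \in z(X,r)(C)$ to $j_* i^* \alpha \in z(X,r)(S)$, where $i : D \times X \into C \times X$ is the closed immersion and $j : D \times X \to S \times X$ is the finite map induced by $D \to S$. The key input for what follows is that, by compatibility of flat pull-back and proper push-forward of relative cycles with specialization (as in \cite[\S 3.6]{SV}), the specialization of $j_* i^* \alpha$ at $s \in S$ takes the form
$$ (j_* i^* \alpha)_s \ = \ \sum_{d \in D_s} (\pi_d)_* (\alpha_d),$$
where $D_s$ is the (finite) fiber of $D \to S$ over $s$ and $\pi_d : X_{k(d)} \to X_{k(s)}$ is the finite morphism induced by the residue field extension $k(s) \into k(d)$.

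Now suppose $\alpha \in z(X,r)_{\mathcal T, p}(C)$ and fix $T \in \mathcal T$. Since $T$ is defined over $k$, we have $\pi_d^{-1}(T_s) = T_d$, and since $\pi_d$ is finite, it preserves dimensions of closed subsets and sends supports to supports. Hence
$$\dim(|(j_* i^* \alpha)_s| \cap T_s) \ \leq \ \max_{d \in D_s} \dim \pi_d(|\alpha_d| \cap T_d) \ = \ \max_{d \in D_s} \dim(|\alpha_d| \cap T_d) \ \leq \ p(T),$$
the last inequality being the hypothesis that $\alpha$ satisfies $(\mathcal T, p)$ at each $d \in D_s \subseteq C$. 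This shows $j_* i^* \alpha \in z(X,r)_{\mathcal T, p}(S)$, so the pretheory transfer restricts to the subpresheaf $z(X,r)_{\mathcal T, p}$; the same argument applies verbatim to $z_{equi}(X,r)_{\mathcal T, p}$.

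The main obstacle is the identification of the fiber $(j_* i^* \alpha)_s$ as the sum $\sum_{d \in D_s} (\pi_d)_*(\alpha_d)$ at possibly non-generic $s \in S$. This is not a formal consequence of set-level push-pull but requires the precise base change properties of relative cycles in the sense of \cite{SV}; the finiteness of $D \to S$ is essential, as it guarantees the fiber over $s$ is a finite disjoint union of spectra of finite extensions of $k(s)$ so that the push-forward along $\pi_d$ is cycle-theoretically well-behaved.
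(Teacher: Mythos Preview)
Your proof is correct and follows essentially the same approach as the paper: recall the Suslin--Voevodsky transfer as intersection followed by finite push-forward, then observe that the support of the transferred cycle at $s$ is contained in the union of (finite images of) the supports $|\alpha_d|$ for $d$ in the fiber $D_s$, so the incidence bound is preserved. The paper phrases this support containment directly rather than via your explicit specialization formula $(j_* i^* \alpha)_s = \sum_{d} (\pi_d)_* \alpha_d$, and one small notational slip: your final inequality should bound the \emph{excess} (not the raw dimension) by $p(T)$, but the argument is unaffected since whatever dimension bound holds for each $|\alpha_d| \cap T_d$ transfers unchanged.
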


\begin{proof} We recall that both $z_{equi}(X,r)$ and $z_{equi}(X,r)_{cdh} = z(X,r)$ admit canonical 
structures of pretheories 
(in the sense of Voevodsky) in such a way that the canonical morphism $z_{equi}(X,r) \to z_{equi}(X,r)_{cdh}$ 
is a morphism of pretheories \cite[Remark 5.10]{FV}.  
The pretheory structure is defined using intersection followed by push-forward along a finite morphism \cite[Prop.~5.7]{FV}.
For notational simplicity we treat here only the case $z(X,r)$.
So suppose $U$ is a smooth $k$-scheme, $C \to U$ is a smooth curve, and $Z \in c_{equi}(C/U,0)$ with 
morphisms $f: Z \to C$ and $p :Z \to U$.  (We use $c$ instead of $z$ to indicate the support of $Z$ is proper over $U$; see \cite[Lemma 3.3.9]{SV}.)
For $W \in z(X,r)(C)$, we first form the intersection $W_Z$ of $W \into C \times X$ with the Cartier divisor $Z \times X \into C \times X$.
The cycle $\phi_{C/U}(Z)(W) \in z(X,r)(U)$ is then the push-forward ${(p \times \id)}_* (W_Z)$ of $W_Z$ along (the proper morphism)
$p \times \id: Z \times X \to U \times X$.
In particular, the support of $\phi_{C/U}(Z)(W)$ 
at $u \in U$ is contained in 
the union of the supports of $W_c$ for $c \in f( p^{-1} (u))$.  
Therefore $W \in z(X,r)_{\cal T, p}(C)$ implies $\phi_{C/U}(Z)(W) \in z(X,r)_{\cal T,p}(U)$, and the subpresheaves $z_{equi}(X,r)_{\mathcal T, p}$ 
and $z(X,r)_{ \mathcal T, p}$ are subpretheories inside $z_{equi}(X,r)$ and $z(X,r)$. 
\end{proof}

{\begin{remark} The pretheory structure may be phrased as a coherent system of morphisms $c_{equi}(C/U, 0) \to \Hom (z(X,r)(C), z(X,r)(U))$ for all relative curves $C \to U$. 
 A presheaf with transfers $\cal{F}$ has push-forwards along all $Z \in c(U \times Y /U, 0)$ for $U,Y \in Sm /k$, i.e., is equipped with a coherent system of morphisms
$c(U \times Y /U, 0) \to \Hom (\cal{F}(Y), \cal{F} (U))$, hence has more structure than a pretheory \cite[Prop.~3.1.11]{TRICAT}. 
Since the construction in the proof of Lemma $\ref{pretheory}$ works with $Z \in c ( U \times Y / U, 0)$
, the presheaves $z_{equi}(X,r)_{\ol p}$ and $z(X,r)_{\ol p}$ are in fact presheaves with transfers. 
\end{remark}  }

\begin{proposition} \label{vanishing}  Let $X$ be a quasi-projective variety, and let $(\cal T, p)$ be an incidence datum on $X$.
Assume $k$ admits resolution of singularities.  Then the canonical morphism 
$z_{equi}(X,r)_{\mathcal T, p} \to z(X,r)_{ \mathcal T, p}$ induces a quasi-isomorphism of Suslin complexes. 
\end{proposition}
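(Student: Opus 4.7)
The plan is to mimic the strategy Voevodsky uses in the proof of \cite[Thm.~5.5(2)]{FV}. Form the short exact sequence of presheaves
\[
0 \to z_{equi}(X,r)_{\mathcal T, p} \to z(X,r)_{\mathcal T, p} \to Q \to 0,
\]
with $Q$ the cokernel. Because evaluation at each $\Delta^n$ and passage to the associated normalized chain complex are both exact, the induced long exact sequence in homology reduces the proposition to showing that $C_*(Q)$ is acyclic.

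Next, the lemma stated just before Proposition~\ref{Arp} identifies $z(X,r)_{\mathcal T, p}$ with the cdh-sheafification of $z_{equi}(X,r)_{\mathcal T, p}$; since cdh-sheafification is exact, this immediately forces $Q_{cdh} = 0$. By Lemma~\ref{pretheory}, strengthened in the subsequent remark, both $z_{equi}(X,r)_{\mathcal T, p}$ and $z(X,r)_{\mathcal T, p}$ are presheaves with transfers, so the cokernel $Q$ inherits such a structure (the category of presheaves with transfers being abelian, and the two transfer structures being compatible by construction).

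The final step is to invoke Voevodsky's vanishing theorem for presheaves with transfers: over a perfect field admitting resolution of singularities, a presheaf with transfers whose cdh-sheafification vanishes has acyclic Suslin complex when restricted to smooth schemes. The perfectness and resolution hypotheses (flagged at the end of the introduction precisely for this type of invocation) are both in force, so applying the theorem to $Q$ yields the desired acyclicity and hence the proposition. The main obstacle I anticipate is justifying that the transfer structure on $Q$ is of the form required by Voevodsky's vanishing; this reduces, as indicated in the remark after Lemma~\ref{pretheory}, to confirming that the push-forward construction there extends from relative divisors in smooth relative curves to arbitrary finite correspondences $Z \in c(U \times V / U, 0)$ with $V \in Sm/k$, and that this extension is compatible with the inclusions $z_{equi}(X,r)_{\mathcal T, p} \hookrightarrow z_{equi}(X,r)$ and $z(X,r)_{\mathcal T, p} \hookrightarrow z(X,r)$.
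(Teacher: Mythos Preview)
Your proposal is correct and follows essentially the same strategy as the paper: both rely on the fact that $z_{equi}(X,r)_{\mathcal T,p} \to z(X,r)_{\mathcal T,p}$ becomes an isomorphism after cdh-sheafification (the lemma before Proposition~\ref{Arp}) together with Voevodsky's vanishing machinery. The paper's proof is simply the one-line invocation of \cite[Thm.~5.5(2)]{FV} applied to the morphism of \emph{pretheories} furnished by Lemma~\ref{pretheory}, whereas you unpack that black box by forming the cokernel $Q$, upgrading to the presheaf-with-transfers structure (from the remark after Lemma~\ref{pretheory}) so that $Q$ inherits transfers, and then applying the vanishing theorem to $Q$. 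Your route is slightly more laborious---the transfers upgrade is not strictly necessary, since \cite[Thm.~5.5(2)]{FV} is already stated for morphisms of pretheories and does not require the cokernel to carry any structure---but it is a perfectly valid and arguably more transparent way to arrive at the same conclusion.
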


\begin{proof} Since the canonical morphism of pretheories $z_{equi}(X,r)_{\mathcal T, p} \to z(X,r)_{ \mathcal T, p}$ 
becomes an isomorphism after cdh-sheafification, Voevodsky's results on the cohomology of pretheories
imply $C_* (z_{equi}(X,r)_{\mathcal T, p}) \to C_*(z(X,r)_{ \mathcal T, p})$ is a quasi-isomorphism 
\cite[Thm.~5.5(2)]{FV}.  \end{proof}

The key additional property satisfied by $z(X,r)$ and not $z_{equi}(X,r)$ is the following localization property.
By \cite[Thm.~4.3.1]{SV},  if $i : X_\infty \into X$ is a closed immersion with open complement $j: U \subset X$, 
there is an exact sequence of cdh-sheaves:
\begin{equation}
\label{exact-cdh}
0 \to z(X_\infty, r) \xrightarrow{i_*} z(X, r) \xrightarrow{j^*} z(U, r) \to 0.
\end{equation}
 There is {\it not} such a short exact sequence with $z(-)$ replaced by $z_{equi}(-)$.

\begin{thm} \label{cdh ses} Let $X$ be a quasi-projective variety, and let $(\cal T, p)$ be an incidence datum on $X$.
Suppose $j : X \subset \ol{X}$ is an open immersion with $\ol{X}$ projective, and 
let $i: X_\infty \into \ol{X}$ denote the closed complement.  {Assume $k$ admits resolution of singularities.}
There is an exact sequence of cdh-sheaves:
\begin{equation}
\label{short}
0 \to z(X_\infty, r) \xrightarrow{i_*} z(\ol{X}, r)_{\cal{T}, p} \xrightarrow{j^*} z(X, r)_{\cal{T}, p} \to 0
\end{equation} 
which determines a distinguished triangle of Suslin complexes 
$$C_* ( z(X_\infty, r)) \xrightarrow{i_*} C_* (z(\ol{X}, r)_{\cal{T}, p}) \xrightarrow{j^*} C_*(z(X, r)_{\cal{T}, p}) \to C_* ( z(X_\infty, r))[1]$$
and hence a long exact sequence of the corresponding homology groups. 
\end{thm}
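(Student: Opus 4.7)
The strategy is to bootstrap from the localization sequence (\ref{exact-cdh}) for the full cycle sheaves, verify that the inclusions and projections preserve the incidence condition imposed by $(\mathcal T, p)$, and then invoke the pretheory machinery of Section \ref{sec:one} to pass from cdh-exactness to a distinguished triangle of Suslin complexes.

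First I would set up the maps. Since each $T \in \mathcal T$ is a locally closed subset of $X$, hence disjoint from $X_\infty$, any cycle $\alpha \in z(X_\infty,r)(U)$ has fiberwise support meeting every $T$ in the empty set, so $i_* \alpha$ automatically lies in $z(\overline X, r)_{\mathcal T, p}(U)$. Dually, for $\alpha \in z(\overline X, r)_{\mathcal T, p}(U)$, the support of $(j^* \alpha)_u$ meets $T \subset X$ in exactly the same locus as that of $\alpha_u$, so $j^*$ restricts. Injectivity of $i_*$ and $j^* \circ i_* = 0$ then follow immediately from (\ref{exact-cdh}). Exactness at the middle term is analogous: if $j^* \alpha = 0$, then (\ref{exact-cdh}) supplies a unique $\beta \in z(X_\infty, r)(U)$ with $i_* \beta = \alpha$. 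For surjectivity of $j^*$ as a map of cdh-sheaves, given $\gamma \in z(X,r)_{\mathcal T, p}(U)$, the sequence (\ref{exact-cdh}) yields a cdh-cover $U' \to U$ and a lift $\alpha \in z(\overline X, r)(U')$; the incidence condition is inherited because the fiberwise intersection of $\alpha$ with any $T \subset X$ coincides with that of $\gamma|_{U'}$, which is bounded by $p(T)$.

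The main obstacle will be the passage from the cdh-exactness of (\ref{short}) to the distinguished triangle of Suslin complexes, since after evaluation on $\Delta^\bullet$ the sequence is exact only up to cdh-refinement. Here I would exploit the fact that, by Lemma \ref{pretheory} (and the subsequent remark), all three terms are pretheories (indeed presheaves with transfers), and invoke Voevodsky's vanishing theorem \cite[Thm.~5.5(2)]{FV}: the kernel and cokernel of the presheaf sequence $z(X_\infty,r) \to z(\overline X, r)_{\mathcal T, p} \to z(X,r)_{\mathcal T, p}$ are pretheories whose cdh-sheafifications vanish, so their Suslin complexes are acyclic under the standing resolution-of-singularities hypothesis. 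Equivalently, one may first replace each term by the corresponding equidimensional subpretheory via Proposition \ref{vanishing} (which is a quasi-isomorphism on Suslin complexes), verify cdh-short-exactness there, and again apply \cite[Thm.~5.5(2)]{FV} to conclude. This produces the required short exact sequence of Suslin complexes up to quasi-isomorphism, hence the distinguished triangle and the long exact sequence of perversity homology groups.
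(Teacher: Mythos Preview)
Your proof is correct and follows essentially the same route as the paper: verify that (\ref{short}) is exact as a sequence of cdh-sheaves by reducing to (\ref{exact-cdh}) (with surjectivity the only nontrivial point, handled exactly as you describe), then appeal to the pretheory structure and Voevodsky's vanishing \cite[Thm.~5.5(2)]{FV} for the distinguished triangle. The paper's proof is simply a terser version of yours---it declares the first two exactness statements ``clear'' and compresses the distinguished-triangle step into a single citation---so you have supplied precisely the details the paper elides.
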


\begin{proof} The exactness of the asserted exact sequence
 is clear except at the final term.  Given $\alpha \in z(X, r)_{\cal{T}, p}(U) \subset z(X, r)(U)$, 
by (\ref{exact-cdh}) there exists a cdh-cover $p : U' \to U$ and an element $\ol{\alpha'} \in z(\ol{X}, r)(U')$ such that 
$j^* (\ol{\alpha'}) = p^* (\alpha)$.  But  $p^* (\alpha) \in z(X, r)_{\cal{T}, p}(U')$, so by definition $\ol{\alpha'} \in z(\ol{X}, r)_{\cal{T}, p}(U')$.  
The distinguished triangle now follows from \cite[Thm.~5.5(2)]{FV}.
\end{proof}

\begin{remark} As alluded to towards the end of Section \ref{sec:one}, Proposition \ref{vanishing} and Theorem \ref{cdh ses} hold unconditionally with $\bZ[1/p]$ coefficients over a perfect field of positive characteristic $p$ by the result of Kelly \cite[Thm.~5.3.1]{Kelly}. \end{remark}


\section{Suspension  theorems}
\label{sec:three}

In this section, we adapt the proof of Lawson \cite{Lawson} (formulated in more algebraic terms in
\cite{F} and adapted further in \cite{FV}) to establish ``Lawson suspension theorems"
({$\bb{A}^1$-invariance})
for perversity cycles.  P.~Gajer \cite{Gajer} used similar ideas in the semi-topological setting.

Let $X$ be a projective variety of dimension $d$ equipped with an embedding $X \into \bb{P}^N$.  
There is an induced embedding $\Sigma(X) \into \Sigma(\bb{P}^N) = \bb{P}^{N+1}$, where 
$\Sigma(-)$ denotes the algebraic suspension.  If $\bb{P}^N \into \Sigma(\bb{P}^N)$ is 
defined by the vanishing of the suspension coordinate, then the identification 
$X = \Sigma(X) \cap \bb{P}^N$ allows us to view subvarieties of $X$ as subvarieties of $\bb{P}^{N+1}$.
If $X' \subset X$ is an open subscheme of a projective variety $X$ with complement $X_\infty$, then we define $\Sigma(X') \equiv \Sigma(X) - \Sigma(X_\infty)$; this is an open subscheme of $\Sigma(X)$.

If $(\cal T, p)$ is an incidence datum on $X$, then we define $\Sigma(\cal{T}) := \{ \Sigma(T) \}_{T \in \cal T}$
and we consider both $(\cal{T}, p)$ and $(\Sigma( \cal{T}), p)$ as incidence data on $\Sigma(X)$.

Our arguments in this section use geometric constructions on the Chow monoids and therefore 
our results concern presheaves of equidimensional cycles and their cdh-sheafifications.
To obtain the results for $z_{equi}(X, r)_{\cal{T}, p}$, we apply the functor
$\Hom {( - , \cC_{r+1}(\Sigma(X)))}^+$
(or the subfunctor of morphisms satisfying the field of definition condition)
to our constructions.
For $z(X,r)_{\cal T, p}$, we apply the functor $\Hom_{\contalg} ( - , \cZ_{r+1}(\Sigma(X)))$ or its field of definition subfunctor.
For both cases we observe our constructions respect the field of definition condition.

\begin{thm} \label{thm:proj-susp} Let $X$ be a projective variety, and let $(\cal T, p)$ 
be a finite incidence datum on $X$.  The fiberwise suspension morphism of presheaves
$$\Sigma_X: z_{equi}(X, r)_{\cal{T}, p} \to z_{equi}(\Sigma(X), r+1)_{\Sigma(\cal{T}), p}$$
sending an effective cycle $W \subset U \times X$ to the effective cycle $\Sigma_X(W) \subset U \times \Sigma (X)$
induces a homotopy equivalence
\begin{equation}
\label{basic}
z_{equi}(X, r)_{\cal{T}, p}(\bullet) \ \stackrel{\sim}{\to} \  z_{equi}(\Sigma(X), r+1)_{\Sigma(\cal{T}), p}(\bullet). 
\end{equation}
{The fiberwise suspension also induces a homotopy equivalence
$$z(X, r)_{\cal{T}, p}(\bullet) \ \stackrel{\sim}{\to} \  z(\Sigma(X), r+1)_{\Sigma(\cal{T}), p}(\bullet) .$$}
\end{thm}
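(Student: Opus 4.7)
The plan is to first prove the suspension theorem at the level of the equidimensional presheaf $z_{equi}(-)_{\cal T, p}$ by adapting the classical Lawson suspension theorem \cite{Lawson, F, FV}, and then to transfer to $z(-)_{\cal T, p}$ via Proposition~\ref{vanishing}.

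For the forward direction, I note that $\Sigma(\alpha)$ and $\Sigma(T)$ are cones from the suspension vertex $v$ over $\alpha$ and $T$ respectively, so that
$$\Sigma(\alpha) \cap \Sigma(T) \;=\; v \ast (\alpha \cap T) \;=\; \Sigma(\alpha \cap T),$$
whence $\dim \bigl(\Sigma(\alpha) \cap \Sigma(T)\bigr) = \dim(\alpha \cap T) + 1$, and the excess of $\Sigma(\alpha)$ with $\Sigma(T)$ in $\Sigma(X)$ coincides with the excess of $\alpha$ with $T$ in $X$. Extending fiberwise over $U$-bases, $\Sigma_X$ is a well-defined morphism between the sub-presheaves cut out by the incidence data. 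For a homotopy inverse I would invoke the Lawson moving construction: an algebraic $\bb{A}^1$-family of self-maps of $\cC_{r+1}(\Sigma(X))$ (or of continuous algebraic self-maps of its group completion $\cZ_{r+1}(\Sigma(X))$) deforming an arbitrary $(r+1)$-cycle $W$ on $\Sigma(X)$ to a cycle $W_1$ each of whose components contains $v$, and which is therefore canonically of the form $\Sigma(W')$. Setting $\Phi(W) = W'$, one verifies $\Phi \circ \Sigma_X = \mathrm{id}$ and that the same family provides the $\bb{A}^1$-homotopy $\Sigma_X \circ \Phi \sim \mathrm{id}$. After evaluation on $\Deltadot$ these algebraic families become simplicial homotopies on the corresponding simplicial abelian groups.

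The main obstacle, and the reason the theorem is not a direct quotation from \cite{Lawson, FV}, is to verify that every stage of the Lawson family respects the incidence condition $(\Sigma(\cal T), p)$. The building blocks of the family are linear projections toward $v$ and joins with generic linear subspaces through $v$; each preserves the cone-with-vertex-$v$ structure, and therefore sends each stratum $\Sigma(T) = v \ast T$ to itself. Applied fiberwise, the cone identity of the first paragraph yields excess bounds on the intermediate and terminal cycles matching those of the original. Executing this is a careful dimension count, requiring that the auxiliary linear data in the moving construction be chosen generically relative to the finite collection $\cal T$, so that the intersection dimensions behave as in the classical (non-stratified) Lawson argument.

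Finally, I would apply Proposition~\ref{vanishing} to both $(X, r, \cal T, p)$ and $(\Sigma(X), r+1, \Sigma(\cal T), p)$: the inclusions $z_{equi}(-)_{\cal S, q} \hookrightarrow z(-)_{\cal S, q}$ induce quasi-isomorphisms of Suslin complexes, and by Dold--Kan this is equivalent to homotopy equivalence of the associated simplicial abelian groups. Combined with the statement at the equidimensional level, the suspension map at the $z$-level is also a homotopy equivalence.
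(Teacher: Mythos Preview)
Your outline for the $z_{equi}$-level is essentially the paper's approach: adapt the Lawson suspension argument of \cite{Lawson,F,FV} and verify at each stage that the moving constructions preserve the incidence datum $(\Sigma(\cal T),p)$. The paper makes this explicit by factoring $\Sigma_X$ through an intermediate presheaf $z_{equi}(\Sigma(X),r+1)_{X,\cal T,p}$ (cycles meeting $X$ properly and with controlled excess against each $X^i$, not $\Sigma(X^i)$), handling the first arrow with the deformation to the normal cone (``holomorphic taffy'') and the second with the projecting-cones construction (``magic fans''). Your single-step sketch glosses over this factorization and over the degree filtration needed for the magic fans (one must work with $\cC_{r+1,\leq d}$ and pass to the limit via \cite[Lemma~6.6]{FV}), but the substance is the same.

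The genuine divergence is in how you treat the $z$-level. You propose to deduce it from the $z_{equi}$-level via Proposition~\ref{vanishing}; but that proposition requires resolution of singularities, and Theorem~\ref{thm:proj-susp} as stated carries no such hypothesis. The paper avoids this by running the same geometric constructions directly on continuous algebraic maps into the group completion $\cZ_{r+1}(\Sigma(X))$ (see the final sentences of the proofs of Propositions~\ref{prop:iso1} and~\ref{prop:iso2}). This buys the projective suspension theorem unconditionally, which is then used as input for the quasi-projective case (Theorem~\ref{thm:quasi-susp}), where resolution is genuinely needed for localization. Your route would collapse that distinction and impose resolution already in the projective case.
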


We establish this homotopy equivalence (\ref{basic})  by factoring 
$\Sigma_X$ as a composition
\begin{equation} 
\label{factor} 
z_{equi}(X, r)_{\cal{T}, p} \to z_{equi}(\Sigma(X), r+1)_{X, \cal{T}, p}
 \to z_{equi}(\Sigma(X), r+1)_{\Sigma(\cal{T}), p} ,
 \end{equation}
 showing in Proposition \ref{prop:iso1} (respectively, in Proposition \ref{prop:iso2}) that the first (resp., second) 
morphism induces a homotopy equivalence upon evaluation at $\Deltadot$. 
 The presheaf $z_{equi}(\Sigma(X), r+1)_{X, \cal{T}, p}$
  consists of cycles meeting $X$ properly, and having excess intersection 
 with $T$ no larger than $p(T)$.  
 
 Before giving the proof of Theorem \ref{thm:proj-susp}, we state explicitly the special case of primary interest,
 the suspension isomorphism for perversity cycles on a stratified projective variety.

\begin{cor} 
\label{cor:susp} 
Let $X$ be a stratified projective variety, and let $\ol p$ be a perversity.  
Equip $\Sigma(X)$ with the stratification $\{ \Sigma(X^i) \}$, where $\{ X^i \}$ is the given stratification of $X$.
Fiberwise  suspension  induces homotopy equivalences

$$\Sigma_X: z_{equi}(X, r)_{\ol p}(\bullet) \ \stackrel{\sim}{\to} \ z_{equi}(\Sigma(X), r+1)_{\ol p}(\bullet) \  \text{and}$$

{$$\Sigma_X: z_{}(X, r)_{\ol p}(\bullet)  \ \stackrel{\sim}{\to} \  z_{}(\Sigma(X), r+1)_{\ol p}(\bullet).$$}
\end{cor}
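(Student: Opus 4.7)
The plan is to deduce this corollary directly from Theorem \ref{thm:proj-susp} by recognizing the perversity condition as a particular incidence datum.

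First, I would encode the stratification $\{X^i\}$ together with the perversity $\ol p$ as the incidence datum $(\cal T, p)$ on $X$ with $\cal T = \{X^1, \ldots, X^d\}$ and $p(X^i) = p_i$. By the very definition of the perversity subpresheaves given in Section \ref{sec:two}, one has equalities of subpresheaves
$$z_{equi}(X,r)_{\ol p} \ = \ z_{equi}(X,r)_{\cal T, p}, \qquad z(X,r)_{\ol p} \ = \ z(X,r)_{\cal T, p}.$$

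Next, I would verify that the suspended incidence datum $(\Sigma(\cal T), p)$ on $\Sigma(X)$ corresponds exactly to the perversity $\ol p$ condition relative to the stratification $\{\Sigma(X^i)\}$. Since $\dim \Sigma(X^i) = \dim X^i + 1 = (d-i) + 1$ while $\dim \Sigma(X) = d+1$, each $\Sigma(X^i)$ has codimension $i$ in $\Sigma(X)$, so $\{\Sigma(X^i)\}$ is indeed a valid stratification. For an $(r+1)$-dimensional cycle $\beta$ on $\Sigma(X)$, the perversity $\ol p$ condition reads
$$\dim |\beta| \cap \Sigma(X^i) \ \leq \ (r+1) - i + p_i,$$
which is precisely the excess-$p_i$ condition with respect to $\Sigma(X^i)$. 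Consequently, as subpresheaves,
$$z_{equi}(\Sigma(X), r+1)_{\ol p} \ = \ z_{equi}(\Sigma(X), r+1)_{\Sigma(\cal T), p}, \qquad z(\Sigma(X), r+1)_{\ol p} \ = \ z(\Sigma(X), r+1)_{\Sigma(\cal T), p}.$$

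With these identifications in place, both asserted homotopy equivalences follow immediately by applying the two parts of Theorem \ref{thm:proj-susp} to the incidence datum $(\cal T, p)$. Since the argument is a routine translation from perversity data to incidence data, I do not expect any real obstacle; the only point requiring attention is the codimension computation above, which works out precisely because algebraic suspension uniformly raises the dimension of each stratum (and of the ambient variety) by one.
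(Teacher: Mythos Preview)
Your proposal is correct and follows exactly the paper's approach: the paper explicitly presents Corollary~\ref{cor:susp} as ``the special case of primary interest'' of Theorem~\ref{thm:proj-susp} and gives no separate proof, while you spell out the routine identification of the perversity datum $\ol p$ with the incidence datum $(\cal T, p) = (\{X^i\}, p_i)$ and verify that suspension carries it to the corresponding datum on $\Sigma(X)$. One small inaccuracy: you write $\dim X^i = d-i$, but the paper only requires $\codim X^i \geq i$; this does not affect the argument since suspension raises all dimensions by one and hence preserves the inequality needed for $\{\Sigma(X^i)\}$ to be a stratification.
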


The proof of our first homotopy equivalence uses the technique of deformation to the normal 
cone (see \cite[Ch.~5]{Ful}), called ``holomorphic taffy" by Lawson in \cite{Lawson}.

\begin{prop} 
\label{prop:iso1}
Retain the notation and hypotheses of Theorem \ref{thm:proj-susp}. The morphism 
$\Sigma_X: z_{equi}(X, r)_{\cal{T}, p} \to z_{equi}(\Sigma(X), r+1)_{X, \cal{T}, p}$ 
induces a homotopy equivalence $z_{equi}(X, r)_{\cal{T}, p}(\bullet)  \ \stackrel{\sim}{\to} \ 
z_{equi}(\Sigma(X), r+1)_{X, \cal{T}, p}(\bullet)$.  The same result holds for the cdh-sheafification.
 \end{prop}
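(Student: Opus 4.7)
The plan is to construct an explicit chain-level homotopy inverse $\rho$ to $\Sigma_X$ via intersection with $X \subset \Sigma(X)$, and then to exhibit an $\bA^1$-homotopy between $\Sigma_X \circ \rho$ and the identity using the scaling $\bG_m$-action on $\Sigma(X)$ (Lawson's ``holomorphic taffy'' \cite{Lawson}, an instance of deformation to the normal cone of $X \hookrightarrow \Sigma(X)$). Given $W \in z_{equi}(\Sigma(X), r+1)_{X, \cal T, p}(U)$, proper intersection with $U \times X$ yields a well-defined $r$-equidimensional cycle $\rho(W) \in z_{equi}(X, r)(U)$; since $W \cap (U \times X^i)$ is automatically contained in $U \times X$, the excess bound $p_i$ on $W$ inside $\Sigma(X)$ translates to the bound $p_i$ on $\rho(W)$ inside $X$, so $\rho$ lands in $z_{equi}(X, r)_{\cal T, p}$. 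Directly $\rho \circ \Sigma_X = \mathrm{id}$.

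For the other composition, choose coordinates so that $\Sigma(\bP^N) = \bP^{N+1}$ and $X \subset \{y = 0\}$, and let $\sigma_t \in \Aut(\Sigma(\bP^N))$ be the scaling $[x_0 : \cdots : x_N : y] \mapsto [x_0 : \cdots : x_N : ty]$ for $t \in \bG_m$. Each $\sigma_t$ preserves $\Sigma(X)$ and restricts to the identity on $X$ (hence on every $X^i$). For $W \in z_{equi}(\Sigma(X), r+1)_{X, \cal T, p}(U)$, form the Zariski closure
$$\widetilde W \ := \ \overline{\{(u, t, \sigma_t(y)) : t \in \bG_m,\ (u, y) \in W \setminus (U \times \{v\})\}} \ \subset \ U \times \bA^1 \times \Sigma(X).$$
By the deformation-to-normal-cone picture \cite[Ch.~5]{Ful}, $\widetilde W$ is flat of relative dimension $r+1$ over $U \times \bA^1$, with fiber $\sigma_t(W_u)$ at $t \in \bG_m$ and fiber $\Sigma_X(W_u \cdot X)$ at $t = 0$. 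Identifying $\bA^1 \cong \Delta^1$ via the coordinate $t$, $\widetilde W$ is the $\bA^1$-homotopy between $\mathrm{id}$ and $\Sigma_X \circ \rho$.

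The crux is verifying that $\widetilde W$ lies in the subpresheaf $z_{equi}(\Sigma(X), r+1)_{X, \cal T, p}(U \times \bA^1)$, i.e., that every fiber of $\widetilde W$ meets $X$ properly and has excess at most $p_i$ with each $X^i$. Away from $t = 0$ this is immediate, since $\sigma_t$ fixes $X$ and each $X^i$ pointwise, so $\sigma_t(W_u) \cap X^i = W_u \cap X^i$. At $t = 0$, the fiber $\Sigma_X(W_u \cdot X)$ meets $X$ exactly in $W_u \cdot X$, and $(W_u \cdot X) \cap X^i = W_u \cap X^i$ set-theoretically, so the same bound holds. Combined with $\rho \circ \Sigma_X = \mathrm{id}$, this yields the homotopy equivalence of Suslin complexes for $z_{equi}$. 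The cdh-sheafified version follows via Proposition \ref{vanishing}, together with the observation that intersection with $X$ and the formation of $\widetilde W$ both commute with h-covers, because supports base-change well as recorded in Section \ref{sec:two}.

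The main obstacle will be justifying that $\widetilde W$ belongs to the \emph{perversity} subpresheaf at \emph{every} scheme-point of $U \times \bA^1$, not merely at closed points; this requires showing that $(\cal T, p)$-incidence is inherited by all specializations in a deformation-to-normal-cone family, and in particular pinning down the precise identification of the flat limit at $t = 0$ as the suspension of the refined intersection $W \cdot X$ with the correct multiplicities. One must also be mindful of the field-of-definition condition recalled in Section \ref{sec:one} when the construction is transported to continuous algebraic maps to the Chow monoid in positive characteristic, but $\sigma_t$ and closure are intrinsic enough that this should go through verbatim.
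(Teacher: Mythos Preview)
Your approach is essentially the same as the paper's: both use Lawson's ``holomorphic taffy'' (the $\bG_m$-scaling action on the suspension coordinate, degenerating to $\Sigma_X(W \cap X)$) to produce an $\bA^1$-homotopy between the identity and $\Sigma_X \circ \rho$. The paper packages this via the Chow-variety representation, invoking \cite[Prop.~3.2]{F} for the existence and properties of the continuous algebraic map $\varphi: \cC_{r+1}(\Sigma(X))_X \times \bA^1 \to \cC_{r+1}(\Sigma(X))_X$, and then applies \cite[Lemma 6.6]{FV} to conclude; this lets the paper dispose of exactly the technical points you flag in your last paragraph (identification of the flat limit, preservation of the field-of-definition condition, checking incidence at all scheme-points) by citation rather than by hand.

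One genuine discrepancy: for the cdh-sheafified statement you invoke Proposition~\ref{vanishing}, which requires resolution of singularities. The paper avoids this hypothesis here (it is only needed later for quasi-projective $X$) by running the same taffy construction directly on $\Hom_{\contalg}(-, \cZ_{r+1}(\Sigma(X)))$, using the identification of $z(\Sigma(X), r+1)$ with continuous algebraic maps recalled in Section~\ref{sec:one}. Your route proves a weaker statement than claimed; to match the paper you should either carry your construction through for $z(-,-)$ directly (as you half-suggest with ``commute with h-covers'') or drop the appeal to Proposition~\ref{vanishing}.
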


\begin{proof}
Let $\cC_{r+1,d}(\Sigma(X))_X$ denote the open subset of the Chow variety consisting of cycles 
$\alpha$ such that $\alpha \cap X$ has dimension $r$, i.e., $\alpha$ is not contained in $X$.  
The suspension morphism $\Sigma_X : \cC_{r,d}(X) \to \cC_{r+1,d}(\Sigma(X))$ factors through 
$\cC_{r+1,d}(\Sigma(X))_X$.  As shown in \cite[Prop.~3.2]{F}, there is a continuous algebraic map (i.e., a morphism on
semi-normalizations)
$$\varphi: \cC_{r+1,d}(\Sigma(X))_X \times \bb{A}^1 \to \cC_{r+1, d}(\Sigma(X))_X$$
with the following properties \cite[Prop.~3.2]{F}.  (Here $\varphi_t$ denotes the restriction of
$\varphi$ to $\cC_{r+1,d}(\Sigma(X))_X \times \{ t \}$.)

\begin{enumerate}
\item $\varphi_0$ is the identity on $\cC_{r+1, d}(\Sigma(X))_{X}$;
\item $\varphi_1$ has image contained in $\Sigma_X (\cC_{r,d}(X))$, in fact 
$\varphi_1(\alpha) = \Sigma_X (\alpha \cap X)$; and
\item $\varphi_t$ acts as the identity on $\Sigma_X (\cC_{r, d}(X))$ for all $t \in \bb{A}^1$, 
in fact $\varphi_t$ (for $t \neq 1$) is induced by an automorphism of $\bb{P}^{N+1}$ 
fixing the suspension hyperplane $\bb{P}^N$.
\item $\varphi$ does not depend on the degree $d$.
\end{enumerate}

From properties (2) and (3) it follows that $\varphi$ preserves the field of definition of a cycle.  For $t  \neq 1$, we use that the automorphism is defined over the ground field $k$.  For $t=1$, the operation $\alpha \mapsto \Sigma_X (\alpha \cap X)$ may be described as eliminating all instances of the suspension coordinate in the equations defining $\alpha$.  

We adapt this construction as follows.
For any $U {\in Sm /k}$, let $ z_{equi}(\Sigma(X), r+1)_{X, \cal{T}, p}(U) \ \subset \ z_{equi}(\Sigma(X), r+1)(U)$ 
consist of those $U$-relative
cycles with the property that each specialization meets $X$ properly 
and meets $T \in \cal{T}$ with excess at most $p(T)$.
We proceed to show that $\varphi$ induces a morphism of presheaves (on $Sm / k$): 
\begin{equation}
\label{eq:phi}
\varphi_{\cal T, p} : z_{equi}(\Sigma(X), r+1)_{X, \cal{T}, p} (-) \to z_{equi}(\Sigma(X), r+1)_{X, \cal{T}, p} (- \times \bb{A}^1)
\end{equation}
with the following properties:
\begin{enumerate}
\item $(\varphi_{\cal T, p})_0$ is the identity on $ z_{equi}(\Sigma(X), r+1)_{X, \cal{T}, p}$;
\item $(\varphi_{\cal T, p})_1$ has image contained in $\Sigma_X( z_{equi}(X, r)_{\cal{T}, p})$,
 in fact $(\varphi_{\cal T, p})_1(\alpha) = \Sigma_X (\alpha \cap X)$ for any 
 $\alpha \in  z_{equi}(\Sigma(X), r+1)_{X, \cal{T}, p}(U)$  ; and
\item $(\varphi_{\cal T, p})_t$ acts as the identity on $\Sigma_X( z_{equi}(X, r)_{\cal{T}, p})$ 
for all $t \in \bb{A}^1$, in fact $(\varphi_{\cal T, p})_t$ (for $t \neq 1$) is induced 
by an automorphism of $\bb{P}^{N+1}$ fixing the suspension hyperplane $\bb{P}^N$.
\end{enumerate}
\noindent (Here $(\varphi_{\cal T, p})_t$ denotes $\varphi_{\cal T, p}$ followed by restriction to $( - \times \{ t \})$.)
There is a canonical inclusion of presheaves of 
abelian monoids on $Sm/k$:
$$z^{eff}_{equi}(\Sigma(X), r+1)_{X, \cal{T}, p} (-) \to \Hom(-, \cC_{r+1}(\Sigma(X))_X )$$
which induces
$$z_{equi}(\Sigma(X), r+1)_{X, \cal{T}, p} (-) \to \Hom(-, \cC_{r+1}(\Sigma(X))_X )^+.$$

Now $\varphi$ induces a natural transformation
\begin{equation}
\label{eq:Phi}
\Hom(-, \cC_{r+1}(\Sigma(X))_X )^+   \to \Hom(-  \times \bb{A}^1, \cC_{r+1}(\Sigma(X))_X )^+
\end{equation}
{sending a morphism $f: U \to \cC_{r+1}(\Sigma(X))_X$ to the composition $\varphi \circ (f \times \id_{\bb{A}^1}) : U \times \bb{A}^1 \to \cC_{r+1}(\Sigma(X))_X$.}
We claim this restricts to our desired morphism $\varphi_{\cal T, p}$.
Properties (2) and (3) of $\varphi$ 
imply that for all $\alpha \in \cC_{r+1}(\Sigma(X))_X$ and all $t \in \bb{A}^1$, we 
have $\varphi_t (\alpha) \cap T = \alpha \cap T,$ so the incidence 
conditions with the sets appearing in $\cal{T}$ are preserved.
We have already observed that $\varphi$ 
preserves the field of definition of a cycle.

We are now in a position to apply \cite[Lemma 6.6]{FV}, and this completes the proof for the $equi$-theory.
If we work with {continuous algebraic maps into $\cZ_{r+1}(\Sigma(X))$} instead of
$\Hom(-, \cC_{r+1}(\Sigma(X))_X )^+$, we obtain the result for the (non-equidimensional) cdh theory $z(-,-)$.
\end{proof}

The proof of our second homotopy equivalence uses the technique first
introduced by Lawson in \cite{Lawson}, which he calls ``magic fans."

\begin{prop} 
\label{prop:iso2}
Retain the notation and hypotheses of Theorem \ref{thm:proj-susp}.   The canonical inclusion
$z_{equi}(\Sigma(X), r+1)_{X, \cal{T}, p} \to z_{equi}(\Sigma(X), r+1)_{\Sigma(\cal{T}), p}$ 
induces a homotopy equivalence 
$$z_{equi}(\Sigma(X), r+1)_{X, \cal{T}, p}(\bullet)  \ \stackrel{\sim}{\to} \  z_{equi}(\Sigma(X), r+1)_{\Sigma(\cal{T}), p}(\bullet).$$
The same result holds for the cdh-sheafification.
\end{prop}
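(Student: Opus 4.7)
The plan is to parallel the proof of Proposition $\ref{prop:iso1}$ and again reduce to \cite[Lemma 6.6]{FV}. I will construct a natural transformation
\[
\Psi_{\cal T, p}: z_{equi}(\Sigma(X), r+1)_{\Sigma(\cal T), p}(-) \to z_{equi}(\Sigma(X), r+1)_{\Sigma(\cal T), p}(- \times \bA^1)
\]
with $(\Psi_{\cal T, p})_0 = \id$, with $(\Psi_{\cal T, p})_1$ factoring through the subpresheaf $z_{equi}(\Sigma(X), r+1)_{X, \cal T, p}$, and with $(\Psi_{\cal T, p})_t$ restricting to the identity on this subpresheaf for every $t \in \bA^1$. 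The cdh-sheafification version will follow by the parallel argument, working with continuous algebraic maps into $\cZ_{r+1}(\Sigma(X))$ in place of $\Hom(-, \cC_{r+1}(\Sigma(X)))^+$.

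The essential geometric input is Lawson's ``magic fan'' construction from \cite{Lawson}: a morphism $F: \cC_{r+1, d}(\Sigma(X)) \times \bA^1 \to \cC_{r+1, d'}(\Sigma(X))$ assembled from a $1$-parameter family of projective transformations of the ambient $\bP^{N+1}$ preserving $\Sigma(X)$, with $F_0 = \id$, with $F_1$ taking values in the open subvariety $\cC_{r+1}(\Sigma(X))_X$ of cycles meeting $X$ properly, and with $F_t$ acting as the identity on $\Sigma_X(\cC_r(X))$ for every $t$. From $F$ I would induce a natural transformation on $\Hom(-, \cC_{r+1}(\Sigma(X)))^+$ exactly as in the construction of $\varphi_{\cal T, p}$ in (\ref{eq:phi}), and then show it restricts to the claimed $\Psi_{\cal T, p}$.

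The hardest step---and what I expect to be the principal obstacle beyond Lawson's original theorem---is verifying that $F$ respects the $(\Sigma(\cal T), p)$-incidence datum. The guiding observation is that each $\Sigma(X^i)$ is itself an algebraic suspension and hence is preserved as a subvariety of $\bP^{N+1}$ by the projective transformations constituting the magic fan. This invariance should imply $F_\lambda(\alpha) \cap \Sigma(X^i)$ has the same dimension as $\alpha \cap \Sigma(X^i)$ for generic $\lambda$, so the bound $\dim|\alpha \cap \Sigma(X^i)| \leq r+1-i+p_i$ is preserved. At the degenerate value $\lambda = 1$, where $F_1(\alpha)$ involves the fiberwise suspension of $\alpha \cap X$, I would combine the identity $\Sigma_X(\alpha \cap X) \cap \Sigma(X^i) = \Sigma_X(\alpha \cap X^i)$ with the fact that algebraic suspension raises dimension by exactly one to recover the $p_i$-bound. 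The field-of-definition condition in positive characteristic is preserved for the same reason as in Proposition $\ref{prop:iso1}$: the transformations comprising the magic fan are defined over $k$.
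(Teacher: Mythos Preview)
Your overall strategy---use Lawson's ``magic fan'' construction to build an $\bA^1$-homotopy and then invoke \cite[Lemma 6.6]{FV}---matches the paper's. But your description of the magic fan is wrong in ways that break the argument.

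The magic fan is \emph{not} a $1$-parameter family of projective transformations of $\bP^{N+1}$. Rather (see \cite[Prop.~3.5]{F}), one suspends the cycle $\alpha$ into $\bP^{N+2}$, intersects with a $\bP^1$-family of degree-$e$ hypersurfaces through $e\cdot \bP^{N+1}$, and projects back. This has three consequences that contradict your setup. First, the degree of the cycle jumps from $d$ to $de$, so the map goes $\cC_{r+1,\leq d}(\Sigma(X))\times\bA^1 \to \cC_{r+1,\leq de}(\Sigma(X))$ and depends on a degree bound; one must work with the filtration by degree and cannot write down a single map on all of $\cC_{r+1}(\Sigma(X))$. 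Second, at $t=0$ one obtains $\psi_e(\alpha,0)=e\cdot\alpha$, \emph{not} $\alpha$; there is no way to have $F_0=\id$. Third, $\psi_e$ does \emph{not} act as the identity on $\Sigma_X(\cC_r(X))$ or on the subpresheaf $z_{equi}(\Sigma(X),r+1)_{X,\cal T,p}$---you are importing property (3) from the holomorphic-taffy construction of Proposition~\ref{prop:iso1}, and it simply does not hold here.

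The paper repairs the failure $\psi_e(-,0)=e\cdot\id$ by the standard trick: use both $\psi_e$ and $\psi_{e+1}$ and send $(a,b)\in \cal F_{\leq d}\times\cal F_{\leq d}$ to $(\psi_{e+1}(a)-\psi_e(a))-(\psi_{e+1}(b)-\psi_e(b))$. At $t=0$ this is $(e{+}1)a-ea-(e{+}1)b+eb=a-b$, i.e.\ the identity on the group completion, while for $t\neq 0$ each $\psi$ already lands in the $(X,\cal T,p)$-locus. Your proposal lacks this step entirely. Your intuition that the construction respects the stratum $\Sigma(X^i)$ is correct---because $\psi_e$ affects only the suspension coordinate, one has $(\psi_e)_t(\alpha)\cap\Sigma(X^i)=(\psi_e)_t(\alpha\cap\Sigma(X^i))$---but the improved incidence with $X^i\subset\Sigma(X^i)$ (not just $\Sigma(X^i)$) requires choosing $e$ large relative to the bounded families $\{|\alpha\cap\Sigma(X^i)|\}$, which is another reason the degree filtration is unavoidable.
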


\begin{proof} 
Let $\alpha \in Z_{r, \ol{p}}(X)$, 
and suppose $\alpha = \alpha^+ - \alpha^-$, where $\alpha^+$ and $\alpha^-$ are effective cycles 
with no components in common.  Then
 $Z_{r,\leq d, \ol{p}}(X) \subset Z_{r, \ol{p}}(X)$ 
consists of those cycles $\alpha$ such that $ \deg (\alpha^+) \leq d$ and $\deg (\alpha^-) \leq d$
(with respect to the given closed embedding $X \subset \bP^N$).  {Since the degree is
invariant under field extensions, this pointwise condition defines
a subpresheaf in our cycle presheaves.}

As shown in \cite[Prop.~3.5]{F}, for every $d \geq 0$, there exists an integer $e_d$ such that for 
every $e \geq e_d$ there exists a morphism of semi-normal schemes
\begin{equation}
\label{eq:psi}
\psi_e : \cC_{r+1, \leq d} (\Sigma(X)) \times \bb{A}^1 \to \cC_{r+1, \leq de} (\Sigma(X))
\end{equation}
with the following properties:
\begin{enumerate}
\item $\psi_e (\alpha, 0) = e \cdot \alpha$ for all $\alpha \in \cC_{r+1, \leq d} (\Sigma(X))$; and
\item $\psi_e (\alpha, t) \in \cC_{r+1, \leq de}(\Sigma(X))_X$, for all $\alpha \in \cC_{r+1, \leq d} (\Sigma(X))$ and all $t \neq 0  \in \bb{A}^1$.
\end{enumerate}

{Since the $\bb{A}^1$ corresponds a family of divisors defined over the ground field, and the suspension and projection operations preserve the field of definition of a cycle, the morphism $\psi_e$ preserves the field of definition condition.}

For ease of exposition we introduce some notation.  
Let $\cal{F}'_{\leq d}$ denote the presheaf $z^{eff}_{equi}(\Sigma(X), r+1, \leq d)_{X, \cal{T}, p}$, 
and let $\cal{F}_{\leq d}$ denote the presheaf $z^{eff}_{equi}(\Sigma(X), r+1, \leq d)_{\Sigma(\cal{T}), p}$.  
We have the following commutative diagram
of canonical inclusions of presheaves on $Sm/k$:
$$\xymatrix{\cal{F}'_{\leq d} \ar[r] \ar[d] & \Hom(-, C_{r+1, \leq d}(\Sigma(X))_X) \ar[d] \\
\cal{F}_{\leq d} \ar[r] &  \Hom(-, C_{r+1, \leq d}(\Sigma(X)))} $$
We let $z_{equi}(\Sigma(X), r+1, \leq d)_{\Sigma(\cal{T}), p}$ denote the quotient of 
$\cal{F}_{\leq d} \times \cal{F}_{\leq d}$ by the evident relation: $(a,b) \sim (a', b')$ if $a+b' = a' +b$ 
as cycles.  Note that $z_{equi}(\Sigma(X), r+1)_{\Sigma(\cal{T}), p} = 
\cup_d z_{equi}(\Sigma(X), r+1, \leq d)_{\Sigma(\cal{T}), p}$.  We employ the analogous notation with the subscript $X, \cal{T}, p$.

We claim that $\psi_e$ of (\ref{eq:psi}) restricts to  a morphism of presheaves 
$(\psi_e)_{\cal T,p}: \cal{F}_{\leq d} (-)  \to  \cal{F}_{\leq de} (- \times \bb{A}^1)$ with the following properties:
\begin{enumerate}
\item ${((\psi_e)_{\cal T,p})}_0( \alpha) = e \cdot \alpha$ for all $\alpha \in \cal{F}_{\leq d}(U)$; and
\item ${((\psi_e)_{\cal T,p})}_t (\alpha) \in \cal{F}'_{de}(U)$ for all $\alpha \in \cal{F}_{\leq d}(U)$ and all $t \neq 0 \in \bb{A}^1$.
\end{enumerate}

{Since the operation $\psi_e$ affects only the suspension coordinate, it follows that }
$${( \psi_e )}_t (\alpha) \cap \Sigma(T) =  {(\psi_e)}_t  ( \alpha \cap \Sigma(T) )$$
{for all $\alpha \in \cC_{r+1}(\Sigma(X)), \ t \in \bA^1$. The right hand side is controlled by hypothesis, and a bound on the dimension of the left hand side defines $\cal{F}_{\leq de}$.  Therefore $\psi_e$ restricts to a morphism on the subpresheaf $\cal{F}_{\leq d}.$}

{The first property is immediate from the corresponding condition of $\psi_e$.  The second property means that $\psi_e$ improves the incidence with $T \into \Sigma(T)$ and with $X \into \Sigma(X)$.  The improvement with $X \into \Sigma(X)$ is due to \cite[Prop.~3.5]{F}, and the incidence with $T$ is handled similarly.  Namely, given a bounded family of cycles $\{ \alpha \}$ on $\Sigma(X)$ satisfying the $(\Sigma(\cal{T}), p)$ condition, we consider the bounded families of $(r+1- \codim_X(T) +p(T))$-dimensional cycles $\{ |\alpha \cap \Sigma(T) | \}$ for all $T \in \cal T$. Following \cite[Prop.~3.5]{F} we find a $\PP^1$-family of hypersurfaces (of large degree $e$ depending on these bounded families) through $e \cdot \PP^{N+1}$ such that no member (besides $e \cdot \PP^{N+1}$) contains any of the cycles in the bounded families. (The finiteness of $\cal T$ guarantees we can find a family which works uniformly.) This guarantees the moved cycle satisfies the (stronger) $(X, \cal T, p)$ condition.}

{The rest is formal.}  The morphism of presheaves
$$(\cal{F}_{\leq d} \times \cal{F}_{\leq d}) (-) \to z_{equi}(\Sigma(X), r+1)_{\Sigma(\cal{T}), p}(- \times \bb{A}^1)$$
defined by
$$(a,b) \mapsto ({(\psi_{e+1})}_{\cal T, p}(a) - {(\psi_{e})}_{\cal T, p}(a)) \ -  \ ( {(\psi_{e+1})}_{\cal T, p}(b) - {(\psi_{e})}_{\cal T, p}(b) )$$
determines a natural transformation
\begin{equation}
\label{psi_e}
z_{equi}(\Sigma(X), r+1, \leq d)_{\Sigma(\cal{T}), p}(-) \to z_{equi}(\Sigma(X), r+1)_{\Sigma(\cal{T}), p} (- \times \bb{A}^1)
\end{equation}
which relates the identity (at $t=0$) to a morphism factoring (for all $t \neq 0$) through 
$z_{equi}(\Sigma(X), r+1)_{X, \cal{T}, p}$.  
Now \cite[Lemma 6.6]{FV} completes the proof, as in the conclusion of the proof of Proposition \ref{prop:iso1}.
\end{proof}

We next extend Theorem \ref{thm:proj-susp} and Corollary \ref{cor:susp} to quasi-projective varieties.  The proof employs the 
localization theorem for  $z(X,r)_{\cal T,p}$ and the comparison of $z_{equi}(X,r)_{\cal T,p}$ with $z(X,r)_{\cal T,p}$,
and thus requires that $k$ admits 
resolution of singularities, or inverting $\charct(k) = p > 0$ in the coefficients.
Localization provides us with the distinguished triangles of Proposition $\ref{cdh ses}$
which we use to reduce the case of $X$ quasi-projective to the consideration of the projective closure $\overline X$
of $X$ and the projective complement $X_\infty = \overline X - X$.

\begin{thm}
\label{thm:quasi-susp} 
Let $X$ be a quasi-projective variety, and let $(\cal T, p)$ be a finite incidence datum on $X$.
Assume $k$ admits resolution of singularities.  Then the morphism of presheaves
$$\Sigma_X: z_{equi}(X, r)_{\cal{T}, p} \to z_{equi}(\Sigma(X), r+1)_{\Sigma(\cal{T}), p}$$
induces a homotopy equivalence
$$z_{equi}(X, r)_{\cal{T}, p}(\bullet) \stackrel{\sim}{\to} z_{equi}(\Sigma(X), r+1)_{\Sigma(\cal{T}), p}(\bullet).$$
Consequently, if $X$ is a stratified quasi-projective variety and  $\ol p$ is a perversity, then the
suspension morphism of presheaves induces homotopy equivalences
$$\Sigma_X: z_{equi}(X, r)_{\ol p}(\bullet)\ \stackrel{\sim}{\to} \  z_{equi}(\Sigma(X), r+1)_{\ol p}(\bullet)  \ \text{ and} $$
{$$\Sigma_X: z_{}(X, r)_{\ol p}(\bullet)\ \stackrel{\sim}{\to} \  z_{}(\Sigma(X), r+1)_{\ol p}(\bullet).$$}
\end{thm}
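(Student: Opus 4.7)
The plan is to reduce from the quasi-projective case to the projective case using the localization distinguished triangle of Theorem \ref{cdh ses}, and then transfer the result back from $z(-)_{\cal{T}, p}$ to $z_{equi}(-)_{\cal{T}, p}$ via the comparison quasi-isomorphism of Proposition \ref{vanishing}. Concretely, fix an open immersion $j: X \into \ol{X}$ into a projective variety with closed complement $i: X_\infty \into \ol{X}$. Applying $\Sigma(-)$ gives an open immersion $\Sigma(X) \into \Sigma(\ol X)$ with closed complement $\Sigma(X_\infty) \into \Sigma(\ol X)$. Viewing each $X^i \in \cal T$ as a locally closed subset of $\ol X$, the pair $(\cal{T}, p)$ is an incidence datum on $\ol X$ and $(\Sigma(\cal{T}), p)$ is an incidence datum on $\Sigma(\ol X)$.

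I would then write down the two long exact sequences provided by Theorem \ref{cdh ses}, the first applied to $X \subset \ol X$ with incidence datum $(\cal{T}, p)$ and the second applied to $\Sigma(X) \subset \Sigma(\ol X)$ with incidence datum $(\Sigma(\cal{T}), p)$, and connect them by the fiberwise suspension. The vertical maps on the two left terms are quasi-isomorphisms: for the middle term this is Theorem \ref{thm:proj-susp} applied to $\ol X$ with datum $(\cal{T}, p)$, and for the leftmost term it is Theorem \ref{thm:proj-susp} applied to $X_\infty$ with the empty incidence datum (cycles supported in $X_\infty$ are disjoint from the $X^i \in \cal{T}$, so no incidence condition on the left term is needed). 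A diagram chase (the five-lemma applied to the long exact sequences of homotopy groups coming from the associated distinguished triangles of Suslin complexes) then yields the desired equivalence
$$
z(X,r)_{\cal{T}, p}(\bullet) \ \stackrel{\sim}{\to} \ z(\Sigma(X), r+1)_{\Sigma(\cal{T}), p}(\bullet).
$$

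To pass to the equidimensional version, I would invoke Proposition \ref{vanishing}: under resolution of singularities, the canonical morphisms $z_{equi}(X,r)_{\cal{T},p} \to z(X,r)_{\cal{T},p}$ and $z_{equi}(\Sigma(X),r+1)_{\Sigma(\cal{T}),p} \to z(\Sigma(X),r+1)_{\Sigma(\cal{T}),p}$ induce quasi-isomorphisms on Suslin complexes. Combining this with the commutative square relating $\Sigma_X$ for the $equi$- and $cdh$-theories gives the equidimensional statement. The corollary for a stratified perversity then follows by specializing $(\cal{T}, p) = (\{X^i\}, \ol p)$, since by definition this incidence datum recovers the perversity subpresheaves.

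The main obstacle is ensuring the square of distinguished triangles really does commute as written, i.e., that fiberwise suspension is compatible both with the closed push-forward $i_*$ and with the open restriction $j^*$; this is essentially formal once one observes that $\Sigma(X_\infty)$ is the scheme-theoretic closed complement of $\Sigma(X)$ in $\Sigma(\ol X)$, and that extending or restricting a $U$-relative cycle commutes with the suspension construction on the second factor. A secondary issue is the verification that cycles supported in $\Sigma(X_\infty)$ automatically satisfy the $(\Sigma(\cal{T}), p)$ condition on $\Sigma(\ol X)$, so that the leftmost column of the morphism of distinguished triangles really is modeled by $z(X_\infty,r) \to z(\Sigma(X_\infty), r+1)$ without any additional incidence constraint; this follows from the fact that the expected dimension of $\Sigma(X_\infty) \cap \Sigma(X^i)$ is already of the form dictated by the perversity.
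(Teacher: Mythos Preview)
Your proposal is correct and follows essentially the same route as the paper: reduce to the $z(-)_{\cal{T},p}$ statement via Proposition~\ref{vanishing}, compactify, compare the two localization triangles of Theorem~\ref{cdh ses} via fiberwise suspension, and conclude by the five-lemma using Theorem~\ref{thm:proj-susp} on $\ol X$. The only cosmetic difference is that for the $X_\infty$ column the paper cites the classical $\bb{A}^1$-homotopy invariance \cite[Thm.~8.3(1)]{FV} together with the identification $z(\Sigma(X_\infty),r+1)\cong z(X_\infty\times\bb{A}^1,r+1)$, rather than reusing Theorem~\ref{thm:proj-susp} with the empty incidence datum as you do; also, your final remark about ``expected dimension'' is unnecessary, since the $\Sigma(X^i)$, viewed as locally closed subsets of $\Sigma(X)\subset\Sigma(\ol X)$, are literally disjoint from $\Sigma(X_\infty)$.
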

  
  \begin{proof} By Proposition $\ref{vanishing}$, it suffices to prove the statements for the morphism 
$\Sigma_X: z(X, r)_{\cal{T}, p} \to z(\Sigma(X), r+1)_{\Sigma(\cal{T}), p}$.  

Choose a projective compactification $\ol{X}$ of $X$, and regard $(\cal{T}, p)$ as an incidence datum on $\ol{X}$.
The morphism $\Sigma_{X_\infty} : z(X_\infty,r) \to z(\Sigma( X_\infty), r+1)$ induces a quasi-isomorphism of 
Suslin complexes by the usual $\bb{A}^1$-homotopy invariance \cite[Thm.~8.3(1)]{FV} and the isomorphism of 
sheaves $z(\Sigma(X_\infty), r+1) \cong z(X \times \bb{A}^1, r+1)$. 
The morphism $\Sigma_{\ol X} : z(\ol X, r)_{\cal{T}, p} \to z(\Sigma(\ol X), r+1)_{\Sigma(\cal{T}), p}$ 
induces a homotopy equivalence after evaluation at $\Deltadot$ by Theorem $\ref{thm:proj-susp}$.  

The suspension map determines a map of distinguished triangles of Suslin complexes 
as in Proposition \ref{cdh ses} which determines a map of long exact sequences of homology groups.
We view these homology groups as the homotopy groups of the simplicial abelian groups obtained
by applying  to $\Deltadot$ the short exact sequences of sheaves of the form (\ref{short}).  
Thus, the 5-Lemma enables us to
conclude the asserted isomorphisms.
 \end{proof}

\begin{remark} Theorem \ref{thm:quasi-susp} holds unconditionally over a perfect field of positive characteristic $p$ with $\bZ[1/p]$ coefficients. \end{remark}


\vskip .2in
\section{Some intersection products}
\label{sec:four}

Let $X$ be a possibly singular variety of pure dimension $d$ with smooth locus $X^{sm}$ open in $X$ 
and singular locus $X_{sing} = X-X^{sm}$.  
{For the remainder of the paper we assume that $X$ is equipped with a stratification such that the singular locus of 
$X$ is contained in $X^1$.  }

Let $V, \ W$ be closed irreducible subvarieties of $X$
of dimension $r, \ s$ respectively and assume that the dimension of the intersection of the supports
$|V| \cap |W|$ is $\leq r+s-d$ (i.e., $V, \ W$ intersect properly).  Assume that no component of 
$|V| \cap |W| \cap X_{sing}$ has dimension $\geq r+s-d$.  Then we justify in Theorem \ref{thm:star}
our view that a good candidate for $V \bullet W$ on $X$ is the closure in $X$ of  the usual intersection
product of $V \cap X^{sm}, \ W \cap X^{sm}$ on the smooth variety $X^{sm}$.

With this in mind, we first formalize a stratified version of ``proper" intersection of cycles on a 
possibly singular variety $X$.

\begin{defn}
\label{def:star}
Let $X$ be a stratified variety of pure dimension $d$, let $\alpha, \beta$ be algebraic cycles on $X$ of dimension
$r,s$, and let $\ol c$ be a perversity.  Then $(\alpha, \beta)$ is said to satisfy condition $(*,\ol c)$ 
provided that 
        $$\dim(|\alpha| \cap |\beta| \cap X^i) \leq r+s-d-(i-c_i), \quad \text{for all }  i =1, \ldots, d$$
and $\dim(|\alpha| \cap |\beta| ) \leq r+s-d$.
\end{defn}

As we shall see in Section \ref{sec:cup-cap} (Propositions \ref{smooth pairing} and \ref{cap}, and Corollary \ref{sing pairing}), such pairs are provided by cycles of perversity $\ol p$ and generalized cocycles of perversity $\ol q$, if $\ol p + \ol q \leq \ol t$, where $\ol t$ denotes the top perversity.

\begin{thm}
\label{thm:star}
Let $X$ be a stratified variety of pure dimension $d$.
Let $z_{r*s,\ol c}(X) \ \subset \ z(X,r) \times z(X,s)$ denote the subsheaf on $(Sch/k)$
consisting of pairs
satisfying condition $(*,\ol c)$.  Then the {closure of the} intersection pairing on the smooth
locus of $X$ {defines} 
a morphism of functors on $(Sch / k)$: 
$$\bu: z_{r*s, \ol c}(X) \ \to \ z(X, r+s-d)_{\ol c}.$$
\end{thm}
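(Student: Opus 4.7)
The plan is to define $\alpha \bullet \beta$ as indicated in the opening paragraph of the section: restrict to the smooth locus, intersect there using classical intersection theory on a smooth variety, and then take the closure in $X$. Condition $(*, \ol c)$ plays two distinct roles: applied with $i = 0$ (taking $X^0 = X$, $c_0 = 0$), it guarantees fiberwise proper intersection on all of $X$, hence in particular on $X^{sm}$, so that classical intersection theory applies; applied with $i \geq 1$, it bounds excess intersection with each $X^i$ tightly enough that the closure step produces an $(r+s-d)$-dimensional cycle of perversity $\ol c$ and no new top-dimensional components appear over the singular locus.

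The first step is the intersection on the smooth locus. Let $j : X^{sm} \hookrightarrow X$ denote the open immersion; flat pullback produces $j^*\alpha \in z(X^{sm},r)(U)$ and $j^*\beta \in z(X^{sm},s)(U)$. Since $X^{sm}$ is smooth, the diagonal $\Delta : X^{sm} \hookrightarrow X^{sm}\times X^{sm}$ is a regular closed embedding of codimension $d$ and a Gysin pullback is available. Forming the external product $j^*\alpha \times_U j^*\beta \in z(X^{sm}\times X^{sm}, r+s)(U)$ and applying $\Delta^!$ (legitimate because the $(*, \ol c)$-hypothesis yields fiberwise proper intersection on $X^{sm}$) produces
$$\gamma := \Delta^!(j^*\alpha \times_U j^*\beta) \in z(X^{sm}, r+s-d)(U).$$

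The second step is to close up $\gamma$ in $U \times X$. The support of $\gamma$ lies in $|\alpha|\cap|\beta|\cap (U \times X^{sm})$, whose closure lies in $|\alpha|\cap|\beta|$. The bound $\dim(|\alpha_u|\cap|\beta_u|\cap X_u^i) \leq r+s-d-i+c_i$ (using $c_1 = 0$ for $i = 1$) guarantees that the closure acquires no new $(r+s-d)$-dimensional fiberwise components along the singular locus. Hence closing up $\gamma$ component-by-component, with the multiplicities inherited from $X^{sm}$, defines a cycle $\alpha \bullet \beta$ on $U \times X$ of relative dimension $r+s-d$. The perversity $\ol c$ condition on $\alpha \bullet \beta$ then follows immediately from $|\alpha \bullet \beta| \subseteq |\alpha|\cap|\beta|$ combined with the dimension bounds supplied by $(*, \ol c)$.

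The final step is to verify that $\alpha \bullet \beta$ is genuinely a Suslin--Voevodsky $U$-relative cycle (with well-defined specializations and universally integral coefficients), and that the assignment $(\alpha,\beta)\mapsto \alpha\bullet\beta$ is functorial in $U$. Both amount to checking that the closure operation commutes with base change in the presence of the fiberwise dimension bounds furnished by $(*, \ol c)$; this is where I expect the main technical obstacle to lie. The key point is to reconcile the multiplicities assembled from the smooth-locus intersection with the multiplicities that would be obtained fiberwise on $X_u$ via Serre's Tor formula followed by closure, so that specialization commutes with the construction. This should follow from \cite[Lemma 3.3.8]{SV} on the behavior of supports under universally open base change, together with the observation that any component of $|\alpha|\cap|\beta|$ contained in $U \times X^i$ has fiberwise dimension strictly less than $r+s-d$ and so contributes trivially to the $(r+s-d)$-cycle $\alpha \bullet \beta$; if needed, one can reduce to an equidimensional and flat setting via platification before comparing specializations.
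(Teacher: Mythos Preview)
Your approach is essentially the same as the paper's: intersect on the smooth locus, take closures, and use the $(*,\ol c)$ bounds to control what happens over the singular strata. The difference is organizational. The paper works at the generic point $\eta$ of $U$ (assuming $U$ integral), forms $(\alpha_\eta)^{sm}\bullet_{X^{sm}_\eta}(\beta_\eta)^{sm}$, closes up in $U\times X$, and then verifies well-defined specializations directly via fat points: since the intersection product on smooth varieties commutes with specialization, the specialization of $\alpha\bullet\beta$ along any fat point over $u$ equals the closure in $X_u$ of the intersection of the specialized cycles on $X^{sm}_u$; independence of the fat point then follows from the corresponding property of $\alpha$ and $\beta$. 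Functoriality in $U$ is absorbed into this same computation.

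Your version instead asserts $\gamma=\Delta^!(j^*\alpha\times_U j^*\beta)\in z(X^{sm},r+s-d)(U)$ up front and postpones the specialization check. But membership in $z(X^{sm},r+s-d)(U)$ is exactly the statement that the fiberwise intersection has well-defined, fat-point-independent specializations with universally integral coefficients; this is not something one can invoke as a pre-existing operation $\Delta^!$ on the Suslin--Voevodsky functors without essentially proving it. So the content of your final paragraph is really the heart of the matter, and there it stays at the level of ``should follow from'' and ``if needed, platification.'' The paper's fat-point argument is short and does the job cleanly; you would do well to replace the last paragraph with that argument rather than gesturing at \cite[Lemma 3.3.8]{SV} and platification.

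One small point: your reading of $(*,\ol c)$ as including $i=0$ (hence fiberwise proper intersection on all of $X$) matches the paper's intent and is needed for the construction to make sense, even though Definition~\ref{def:star} is written with $i\geq 1$.
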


\begin{proof} A pair $(\alpha, \beta) \in z(X,r)(U) \times z(X,s)(U)$ belongs to $z_{r*s,\ol c}(X)(U)$ provided every specialization $(\alpha_u, \beta_u)$ satisfies $(*, \ol c)$ on $X_{u}$.  If $U' \to U$ is a morphism in $Sch / k$, then the specialization of $(\alpha, \beta)_{U'}$ at $u' \in U'$ has support equal to the base change via $u' \to u$ of the support of $|\alpha_u| \cap |\beta_u|$, hence satisfies $(*, \ol c)$.  Therefore the condition $(*, \ol c)$ defines a presheaf.

The morphism of functors is determined by the intersection product on the smooth locus of $X$.  For the moment assume $U$ is integral with generic point $\eta$.  We send $(\alpha, \beta)$ to $\alpha \bu \beta$, defined to be the closure in $X \times U$ of the $r+s-d$-dimensional cycle $(\alpha_\eta )^{sm} \bullet_{X^{sm}_\eta} (\beta_\eta)^{sm}$ in $X^{sm}_\eta$.  This is a cycle on $X \times U$ whose generic points lie over $\eta$, so we need to show it has well-defined specializations.
 
Every pair $(\alpha_u, \beta_u)$ satisfies $(*, \ol c)$, therefore $|\alpha_u| \cap |\beta_u|$ has its generic points in $X^{sm}_{u}$ for every $u \in U$.  The intersection product on smooth varieties is compatible with specialization, so the specialization of $\alpha \bu \beta$ along a fat point $(x_0, x_1)$ over $u \in U$ is the closure in $X_{u}$ of the intersection product of $((x_0, x_1)^* (\alpha))^{sm}$ and $((x_0,x_1)^* (\beta))^{sm}$ in ${X^{sm}_{u}}$.  By hypothesis the specializations of $\alpha$ and $\beta$ are independent of the choice of fat point, so the same is true of $\alpha \bu \beta$.  Since the intersection product preserves integral coefficients, if $\alpha$ and $\beta$ have universally integral coefficients then so must $\alpha \bu \beta$.

If $U$ has several irreducible components, we define $\alpha \bu \beta$ by the procedure above on each component.  
Where the components of $U$  meet, the specializations agree since they may be described in terms of 
specializations of $\alpha$ and $\beta$, which agree by hypothesis.  \end{proof}

\begin{remark} \label{homotopy pairing}
{We remind the reader that a $\bZ$-bilinear pairing $A_\bu \times B_\bu \to C_\bu$ of simplicial abelian groups 
factors as a map of simplicial sets through the smash product of $A_\bu$ and $B_\bu$, 
$$A_\bu \times B_\bu \ \to \ A_\bu \wedge  B_\bu \ \to \ C_\bu$$
and thus determines a pairing on homotopy groups}
\begin{equation}
\label{smash}
\pi_i(A_\bu) \otimes \pi_j(B_\bu) \ \to \pi_{i+j}(C_\bu).
\end{equation}

{Theorem \ref{thm:star} identifies a subsheaf of $z(X,r) \times z(X,s)$ on which intersections can be formed.  The maps $z(X,r) \times z(X,s) \leftarrow z_{r*s, \ol c}(X) \to z(X, r+s-d)_{\ol c}$ then determine a partially defined pairing on homotopy groups.}
\end{remark}

We make explicit the following special case of the functoriality of Theorem \ref{thm:star}.  In fact, much of the above proof of 
Theorem \ref{thm:star} can be interpreted as confirming the commutativity of the diagram in the following corollary.

\begin{cor}
\label{specialization}
Retain the notation and hypotheses of Theorem \ref{thm:star}.  Let $C$ be a smooth and connected curve,
let $\eta \in C$ be the generic point of $C$,
and let $\gamma \in C$ be a closed point of $C$.  
Then the following diagram commutes
$$\xymatrix{
z_{r*s, \ol c}(X)(\eta)  \ar[r]^-\bu & z(X, r+s-d)_{\ol c}(\eta) \\
z_{r*s, \ol c}(X)(C) \ar[d] \ar[u] \ar[r]^-\bu & z(X, r+s-d)_{\ol c}(C) \ar[u] \ar[d]\\
z_{r*s, \ol c}(X)(\gamma)  \ar[r]^-\bu & z(X, r+s-d)_{\ol c}(\gamma)
}$$
\end{cor}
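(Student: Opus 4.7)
The plan is to derive both squares from the fact, established in Theorem \ref{thm:star}, that $\bu$ is a morphism of presheaves on $(Sch/k)$. The vertical arrows in the diagram are simply the restriction maps of the presheaves $z_{r*s,\ol c}(X)$ and $z(X, r+s-d)_{\ol c}$ along the inclusion $\gamma \into C$ (and, passing to the colimit over open neighborhoods of $\eta$, along $\eta \into C$), so the commutativity asserts nothing more than the naturality of $\bu$ with respect to these two restrictions.

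For the upper square, I would simply unwind the definition given in the proof of Theorem \ref{thm:star}: when $C$ is integral with generic point $\eta$, the cycle $\alpha \bu \beta$ is constructed as the closure in $X \times C$ of the smooth intersection product $(\alpha_\eta)^{sm} \bullet_{X^{sm}_\eta} (\beta_\eta)^{sm}$ formed generically. Restricting to $\eta$ returns precisely this generic cycle, which is by definition $\alpha|_\eta \bu \beta|_\eta$. Hence the top square commutes tautologically.

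For the lower square, I would invoke the specialization argument already supplied in the proof of Theorem \ref{thm:star}. Since $C$ is smooth of dimension one and $\gamma$ a closed point, one may choose a fat point $(x_0, x_1)$ supported at $\gamma$. Condition $(*, \ol c)$ ensures that for every $u \in C$ the generic points of $|\alpha_u| \cap |\beta_u|$ lie in $X^{sm}_u$, so the smooth intersection product can be formed fiberwise. The compatibility of that product with flat base change then identifies the specialization of $\alpha \bu \beta$ along $(x_0, x_1)$ with the closure in $X_\gamma$ of $(\alpha_\gamma)^{sm} \bullet_{X^{sm}_\gamma} (\beta_\gamma)^{sm}$, i.e.\ with $\alpha|_\gamma \bu \beta|_\gamma$; independence of the chosen fat point follows since $\alpha$ and $\beta$ are themselves relative cycles with well-defined specializations.

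The principal technical ingredient is the compatibility of the smooth intersection product with specialization along a fat point; this is a classical fact (as cited in the proof of Theorem \ref{thm:star}) and is also the main obstacle in the sense that everything hinges on applying it in a bounded open neighborhood of $\gamma$ where condition $(*,\ol c)$ keeps the intersection inside the smooth locus fiberwise. Once this is granted, the corollary is a direct reorganization of the content of the proof of Theorem \ref{thm:star}.
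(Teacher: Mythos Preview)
Your proposal is correct and matches the paper's approach: the paper does not give a separate proof but simply remarks that the corollary is a special case of the functoriality established in Theorem \ref{thm:star}, and that the proof of that theorem ``can be interpreted as confirming the commutativity of the diagram.'' Your write-up makes this explicit in exactly the intended way, deriving both squares from the naturality of $\bu$ and pointing to the specialization argument already in the proof of Theorem \ref{thm:star} for the lower square.
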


\vskip .1in

\begin{prop}
\label{isolated sings}
Let $X$ be a projective variety of dimension $d$ with only isolated singularities and stratified by $X_{sing} = X^d = \cdots = X^1$.  Let $\ol p$ and $\ol q$ be perversities such that $\ol p + \ol q = \ol t$, and let $r$ and $s$ be positive integers such that $r+s-d \geq 0$.  Then the canonical inclusion $z_{r*s, \ol p + \ol q}(X) \to z(X,r)_{\ol p} \times z(X,s)_{\ol q}$ induces a homotopy equivalence

$$z_{r*s, \ol t}(X)(\bu) \stackrel{\sim}{\to}  (z(X,r)_{\ol p} \times z(X,s)_{\ol q} )(\bu)$$

and hence there is an intersection pairing

$$H^{\ol p}_{n}(X, \bb{Z}(r)) \otimes H^{\ol q}_{m}(X, \bb{Z}(s)) \to H^{}_{n+m-2d}(X, \bb{Z}(r+s-d)).$$
\end{prop}
\begin{proof}
First we consider the case that the perversities both permit intersection with the singular locus.  This means $r-d + p_d$ and $s-d +q_d$ are both non-negative, hence $r+s -d \geq d-(p_d + q_d)$.  Since $p_d + q_d \leq d-1$ this implies $r+s -d \geq 1$.  In this situation we use the functor isomorphisms $z(X - X_{sing}, r) \cong z(X,r) = z(X,r)_{\ol p}$ (and similarly with $r$ replaced by $s$ or $r+s-d$) and we are reduced to the case where $X$ is smooth.  Then the equivalence and pairing are consequences of the Friedlander-Lawson moving lemma for families (\cite[Thm.~3.1]{FLmoving}, as in \cite[Prop.~8.6]{FV}).

The hypothesis $\ol p + \ol q = \ol t$ implies at least one of the perversity conditions allows incidence with the singular locus; without loss of generality we suppose this is $\ol p$, and $\ol q$ disallows incidence with the singular locus.  Let $\alpha \in z(X, r)_{\ol p}(T)$ and $\beta \in z(X,s)_{\ol q}(T)$ be bounded families.  

Now \cite[Thm.~1.7]{FLmoving} implies we can find a sequence of projections $X \to \PP^d$ such that the iterated residual cycle of every $\alpha_t$ meets properly every $\beta_t$ except in the singular locus.  But every $\beta_t$ is disjoint from the singular locus, so every $\alpha_t$ meets properly every $\beta_t$.  Using moves in projective space one obtains the desired move parameterized by a non-empty open subset of $\bb{A}^1$.  To obtain an honest $\bb{A}^1$-family, we use the trick of Voevodsky (as in the proof \cite[Thm.~6.1]{FV}).  The existence of this $\bb{A}^1$-family implies the equivalence upon evaluation at $\Deltadot$ by the usual argument (\cite[Lemma 6.6]{FV}).
\end{proof}

\begin{remark} The iterated residual cycle construction does not seem adequate to move a bounded family of cycles $\alpha$ all of whose elements are disjoint from $X_{sing}$ into general position (with respect to another bounded family $\beta$) while preserving disjointness from $X_{sing}$.  Let $s \in X_{sing}$ be a singular point of $X$, and let $p_1(\alpha) \subset X$ denote the ``sweep" of $\alpha$, i.e., the image of the morphism $\alpha \into X \times T \to X$.  Let $Y \into \bb{G}(1,N)$ denote the set of lines connecting $s$ to a point in the sweep, i.e., $\{ \ell(s,z) | z \in p_1(\alpha) \}$. 
Now $\alpha \to X$ cannot be surjective since every $\alpha_t$ misses $s$, but it can be dense, and if it is dense then $\dim Y = d := \dim X$.

Any projection $p : X \to \PP^d$ arises from an embedding $X \into \PP^N$ and a choice of linear space $V \cong \PP^{N-d-1} \into \PP^n$, i.e., a point $[V] \in \bb{G}(N-d-1, N)$.  The residual cycle of $\alpha$ with respect to $p$ is disjoint from $s$ exactly when $p(s) \notin p(\alpha)$, which means the corresponding linear space $V$ must be disjoint from all lines in $Y$.

Now consider the incidence correspondence $\cal{I} \into \bb{G}(1,N) \times \bb{G}(N-d-1,N)$.  The codimension of $\cal{I}$ is $d$.  If $\dim Y = d$ the morphism $\cal{I} \cap (Y \times \bb{G}(N-d-1,N)) \to \bb{G}(N-d-1,N)$ may be surjective, i.e., the union of the codimension $d$ sets $\cal{I}_y \into \bb{G}(N-d-1,N)$ (as $y$ ranges over $Y$) could comprise all of $\bb{G}(N-d-1,N)$.  In this situation the projection required for the residual cycle construction does not exist.
\end{remark}

\begin{ex}
We consider a simple example due to Zobel \cite{Zobel} of a singular variety $X$ on which
there is no decent intersection product on usual rational equivalence classes of cycles.
Namely,  $X$ is the cone on a quadric surface $Q \into \PP^3$, i.e., on $\PP^1 \times \PP^1 \cong Q$.  
We refer to the unique singular point of $X$ as its vertex $v$.

We use the ``obvious" stratification, namely, $v = X^3  = X^2 = X^1 \into X$.  Since $p_3 \leq p_2 +1$ and $p_3 \leq p_1+2$, the condition on the incidence with $X^3$ determines the perversity.  Therefore we abuse notation and write $\ol p$ for any perversity with $p_3 = p$, where $p \in \{ 0,1,2 \}$.

By the $\bb{A}^1$-invariance of Chow groups, we have:
\begin{itemize}
\item $A_2(X) = A_{2,\ol 1}(X) = A_{2, \ol 2}(X) \cong \bb{Z} \oplus \bb{Z}$, 
with generators corresponding to cones on the two rulings of $\PP^1 \times \PP^1$; and
\item $A_1(X) = A_{1,\ol 2}(X) \cong \bb{Z}$, with generator corresponding to the cone on a point in $\PP^1 \times \PP^1$.
\end{itemize}

The classes of the lines $L = \PP^1 \times q, \ M= p \times \PP^1 \into Q \into X$ are equal in $A_1(X)$, and each generates.  
Note that each is rationally equivalent to $N = C(p \times q)$.  The lines $L$ and $M$ are contained in $X^{sm}$ but $N$ is not.
Consider the divisor $D = C(\PP^1 \times q')$ in $A_2(X)$ for some $q' \neq q$.  
We have $|D| \cap |L| = \emptyset$ while $|D| \cap |M| = p \times q' \in Q \into X$, and surely the coefficient 
of $p \times q'$ should be $1$.  Therefore, there is no reasonable pairing 
$A_2(X) \times A_1(X) \to A_0(X) \xrightarrow{\deg} \bb{Z}$, even if we consider only intersections 
which occur in the smooth locus of $X$.  Note that Proposition \ref{isolated sings} implies that any rational equivalence between $L$ and $M$ 
passes through the vertex, and that the classes of $L$ and $M$ must be distinct in $A_{1, \ol 0}(X)$.

We proceed to compute the intersection pairing (guaranteed by Proposition \ref{isolated sings}) on the intersection Chow groups. 

To calculate the zero perversity groups, we use that $X$ is birational to $\PP^1 \times Q$, and that geometry away 
from the vertex corresponds to geometry away from $\infty \times Q$.
Taking the birational transform of divisors and rational equivalences (all missing the vertex) identifies 
$A_{2,\ol 0}(X)$ with the relative Picard group $\pic(\PP^1 \times Q, \infty \times Q)$.  
We have $\pic(\PP^1 \times Q, \infty \times Q) \cong \bb{Z}$ (generated by  $\cal{O}(1)$ of the fiber of $\PP^1 \times Q \to Q$) 
since line bundles pulled back from $Q$ have nontrivial intersections with the divisor $\infty \times Q$.  
In essence we use the exact sequence
$$\Gamma(\PP^1 \times Q, \cal{O}^*) \to \Gamma(\infty \times Q, \cal{O}^*) \to 
\pic(\PP^1 \times Q, \infty \times Q)  \to \pic(\PP^1 \times Q) \to \pic(\infty \times Q)$$
in which the first arrow is an isomorphism and the last may be identified with a projection $\bb{Z}^3 \to \bb{Z}^2$.
The map $\bb{Z} \cong A_{2,\ol 0}(X) \to A_2(X) \cong \bb{Z} \oplus \bb{Z}$ sends $1$ to $(1,1)$.  
Proposition \ref{isolated sings} yields a pairing (in the notation of (\ref{pervChow}))
$$A_{2,\ol 0}(X) \times A_{1}(X) \to A_0(X) \cong \bb{Z}, \quad (D, \alpha)\mapsto \deg(\cal{O}(D) |_\alpha). $$

The same assignment determines a pairing $A_{2,\ol 0}(X) \times A_{1, \ol 0}(X) \to A_0(X) \cong \bb{Z}$; we proceed to calculate the group $A_{1, \ol 0}(X)$ by a similar procedure.
The birational transform identifies $A_{1,\ol 0}(X) = A_{1,\ol 1}(X)$ with $1$-cycles on $\PP^1 \times Q$ disjoint from $\infty \times Q$, modulo rational equivalences avoiding $\infty \times Q$.  To calculate this group, note that an integral $1$-cycle $C$ disjoint from $\infty \times Q$ must be contained in $p \times Q$ for some $p \neq \infty \in \PP^1$.  Such $1$-cycles $C,C'$ (contained in $p \times Q, p' \times Q$ respectively) are rationally equivalent on $\PP^1 \times Q$ if and only if they are rationally equivalent avoiding $\infty \times Q$.  Since $C \into p \times Q$ can be moved (avoiding $\infty \times Q$) to $0 \times Q$, say, we find $A_{1,\ol 0}(X) \cong A_1(\PP^1 \times \PP^1) \cong \bb{Z} \oplus \bb{Z}$.

The map $\bb{Z} \oplus \bb{Z} \cong A_{1,\ol 0}(X) = A_{1,\ol 1}(X) \to A_{1,\ol 2}(X) = A_1(X) \cong \bb{Z}$ sends both $(1,0)$ and $(0,1)$ to $1$.  The pairing $A_2(X) \times A_{1,\ol 0}(X) \to A_0(X) \cong \bb{Z}$ may be thought of as sending $(D, C)$ to the degree of $\cal{O}(D \cap {X^{sm}}) |_C$ since the Weil divisor $D$ is Cartier along $C \into X^{sm}$.

There are also pairings between divisors.  Intersection with a Cartier divisor determines pairings $ A_{2,\ol 0}(X) \times A_{2, \ol 0}(X) \to A_{1, \ol 0} (X)$ and $A_{2,\ol 0}(X) \times A_{2, \ol 1}(X) \to A_{1, \ol 1}(X)$.  Finally, there is a pairing $A_{2,\ol 1}(X) \times A_{2, \ol 1}(X) \to A_{1, \ol 2}(X)$ which is the closure of the intersection product formed in the smooth locus, given in coordinates by $(a,b),(c,d) \mapsto ad + bc$.
\end{ex}

\begin{ex} More generally, if $Y$ is the cone on a smooth projective variety $X$ of dimension $d-1$, given the stratification $v = Y^{d} = \cdots = Y^1$, we have the following computation of the intersection Chow groups and product of Theorem \ref{thm:star}.  We write $\ol p$ for any perversity with $p_{d} = p$.
There are two types of groups:
\begin{itemize}
\item $A_{r, \ol p}(Y) = A_r(Y) \cong A_{r-1}(X)$ (for $r>0$ and $r - d + p \geq 0$, so that incidence with the vertex is allowed), and
\item $A_{r, \ol  p}(Y) = A_{r, \ol 0}(Y) \cong A_r(X)$ (for $r \geq 0$ and $r - d + p < 0$, so that incidence with the vertex is disallowed).
\end{itemize}

There are three kinds of pairings:
\begin{itemize}
\item $A_{r, \ol p}(Y) \times A_{s, \ol q}(Y) \to A_{r+s-d, \ol p+\ol q}(Y)$, with $p \geq d-r$ and $q \geq d-s$, provided $r+s-d \geq 1$; via the identification above this product is given by the intersection product on $X$:
$$A_{r-1}(X) \times A_{s-1}(X) \xrightarrow{\bu_X} A_{r-1 + s-1 - (d-1)}(X).$$

\item $A_{r, \ol p}(Y) \times A_{s, \ol q}(Y) \to A_{r+s-d, \ol p+ \ol q}(Y)$, with $p < d-r$ and $q \geq d-s$; this is given by

$$A_r(X) \times A_{s-1}(X) \xrightarrow{\bu_X} A_{r+s-1 - (d-1)}(X) \cong A_{r+s-d, \ol 0}(Y)$$
followed by the canonical morphism $A_{r+s-d, \ol 0}(Y) \to A_{r+s-d, \ol p + \ol q}(Y).$

\item  $A_{r, \ol p}(Y) \times A_{s, \ol q}(Y) \to A_{r+s-d, \ol p+ \ol q}(Y)$, with $p < d-r$ and $q < d-s$; this is given by
$$A_r(X) \times A_s(X) \xrightarrow{\bu_X} A_{r+s-(d-1)}(X) \cong A_{r+s-d+1, \ol 0}(Y)$$
followed by intersecting with the Cartier divisor $X \into Y$, which maps $A_{r+s-d+1, \ol 0}(Y)$ to $A_{r+s-d, \ol 0}(Y)$.  {(Note this pairing is not guaranteed by Proposition \ref{isolated sings}.)}

The last pairing is an instance of the following well-known general principle.  If $i: X \into Y$ is a Cartier divisor, and $a,b \in A_*(X)$, then $i_* (a) \cdot i_*(b) = i_* (a \cdot b) \cdot X$ in $A_*(Y)$ provided both sides are defined.  This identity follows from the projection formula, the associativity of the intersection product, and the self-intersection formula \cite[Cor.~6.3]{Ful}.
\end{itemize}

\end{ex}


\section{Generalized cocycles}
\label{sec:perverse}

In this section, $X$  will denote a quasi-projective variety of pure dimension $d$
and $Y$  will denote a quasi-projective variety of pure dimension $n$.  
In Definition \ref{def:cocycle}, we define the cdh-sheaf on
$Sm/k$ of codimension $t$ cocycles of perversity $\ol p$ on $X$ with values in $Y$, $z^{t,\ol p}(X,Y)$.
Following this definition for a general quasi-projective variety $Y$, we shall
often assume that $Y$ is projective so that we can interpret $z^{s,\ol p}(X,Y)$ in terms of
maps to Chow varieties.

We recall that an effective algebraic $t$-cocycle on $X$ with values in $Y$
is the cycle $Z_f \into X \times Y$ associated with some morphism $f: X \to \cC_{n-t}(Y)$.  
Part of the motivation for considering such cocycles is that the $i$-th homotopy group of some 
formulation of the  ``space" of $t$-cocycles on $X$ with values in $\bP^t$
modulo $(t-1)$-cocycles on $X$ with values in $\bP^{t-1}$ represents $H^{2t-i}(X,\bZ(i))$ as in \cite{FV} (or, in
the semi-topological context, $L^tH^{2t-i}(X)$ as in \cite{FLcocycle}).  An important feature of cocycle groups is that
there are natural cup product pairings on cocycle groups and cap product pairings  relating cocycle
groups and cycle groups.

We proceed to develop a theory of ``generalized cocycles" on a stratified variety $X$ with values
in $Y$.  As the name suggests, an effective generalized cocycle is given by weakening the condition 
that it is the graph of some morphism; instead, in the case $Y$ is projective, we require that it be the graph of some rational
map $f: X \dashrightarrow \cC_{n-t}(Y)$.  

One should view generalized cocycles on $X$ as 
cycles (on $X\times Y$ for some $Y$) which are generically equidimensional over $X$
 (i.e., generically satisfy the cocycle condition) and whose failure to be equidimensional over strata of
 $X$ is governed by a perversity $\ol p$.  Thus, there is an additional constraint on a generalized 
 cocycle of a given perversity $\ol p$ to be a generalized cocycle of some perversity $\ol q < \ol p$, with 
 usual cocycles satisfying the full equidimensionality condition.  The cap product pairing of Section \ref{sec:cup-cap}
will show that
 a generalized cocycle of perversity $\ol p$ taken together with a cycle of perversity $\ol q$
 will essentially satisfy the condition $(*,\ol c)$  with $\ol c = \ol p + \ol q$.  As the perversity  condition
 $\ol p$ of  the generalized cocycle is weakened (i.e., as $\ol p$ increases), such a weakened generalized 
 cocycle pairs with the perversity $\ol q$ cycles satisfying a stronger perversity condition
 (i.e., $\ol q$ decreases).
 
 One formal difference between cycle theories and cocycle theories is that one should not expect
 localization in the contravariant variable $X$.  Thus, the proof of the suspension theorem for generalized
 cocycle spaces does not proceed by first considering $X$ projective and then using localization.  Instead,
 one assumes that the covariant variable $Y$ is projective and observes that the constructions of 
 algebraic homotopies as in Section \ref{sec:three} can be employed on Chow varieties of $Y$.

If $X$ is stratified, then $X \times Y$ inherits a stratification from that of $X$, with 
$(X\times Y)^i \equiv X^i \times Y$.   We define the group of perversity $\ol p$ cocycles on $X$
with values in $Y$, 
$$Z^{t, \ol{p}}(X, Y) \ \subseteq \ Z_{d+n-t, \ol{p}}(X \times Y),$$ 
to be the group of $(d +n -t)$-dimensional cycles $\alpha$ on $X\times Y$ with the 
property that for $x \in X^i - X^{i+1}$, the dimension of $| \alpha | \cap (x \times Y)$ is no larger than $n-t+p_i$
(for $i =1, \ldots, d$), { and for $x \in X - X^1$ the dimension of $|\alpha| \cap (x \times Y)$ is $n-t$.}
Because this condition is a constraint on the support $|\alpha|$ of $\alpha$, this does not permit
``large" fibers to cancel.
Roughly speaking, a cycle lies in $Z_{d+n-t, \ol{p}}(X \times Y)$ if its excess with each stratum 
$X^i \times Y$ is not too large; it lies in the smaller group $Z^{t, \ol{p}}(X, Y)$ if in addition this excess 
is distributed evenly over each stratum $X^i - X^{i+1}$.

\begin{defn}
\label{def:cocycle}
Let $\ol p$ be a perversity, and let $t$ be an integer $0 \leq t \leq n$.  We define 
\vspace{-.3cm}
$$z^{t,\ol p}(X,Y) \  \subseteq \ z(X \times Y, d+n-t)_{\ol p} \ \subseteq \  z(X\times Y,d+n-t)$$
to be the subpresheaf (on $Sch / k$) whose value on $U$ consists of $U$-relative cycles
with $\bZ$-coefficients
$W \into U\times X \times Y$ such that for all $u \in U$, the specialization
$W_u \in Z_{d+n-t}(X_u \times Y)$ belongs to $Z^{t, \ol p}(X_u,Y)$.
By allowing $\bZ[1/p]$-coefficients, we obtain the presheaf $z^{t,\ol p}(X,Y) [1/p]$.
We define the subpresheaves $z^{t, \ol p}_{equi} (X,Y) \ \subseteq \  z_{equi} (X \times Y, d+n-t)_{\ol p}$ similarly.

We define {the} bivariant perversity $\ol p$ motivic cohomology group of bidegree $(i,t)$ to be the group
$$H^{i, t, \ol p}(X, Y) \equiv \pi_{2t-i}(z^{t, \ol p}(X, Y)(\bu)).$$  
These groups are contravariantly functorial with respect to flat, stratified morphisms $f: X' \to X$, and covariantly functorial with respect to proper morphisms $g: Y \to Y'$: we have $f^* : H^{i, t, \ol p}(X, Y) \to H^{i, t, \ol p}(X', Y)$ and $g_* : H^{i, t, \ol p}(X,Y) \to H^{i+2r, r +t, \ol p}(X, Y')$, where $r = \dim(Y') - \dim(Y)$.
\end{defn}

\begin{lemma}
\label{hyperplane}
Let $X$ be a stratified quasi-projective variety and let $\ol  p$ be a perversity.  The homotopy class of the map 
$$i_\ell: z^{t-1,\ol p}(X,\bP^{t-1}) (\bu)  \ \to \ z^{t,\ol p}(X,\bP^t) (\bu) $$
induced by the embedding $\ell: \bP^{t-1} \hookrightarrow \bP^t$ of a hyperplane is independent of the choice
of hyperplane $\ell$ (i.e., independent of the choice of linear embedding).

Similarly, the homotopy class of the quotient map 
$$ p_\ell: z^{t,\ol p}(X,\bP^t) (\bu)  \ \to  z^{t,\ol p}(X,\bP^t) (\bu) /z^{t-1,\ol p}(X,\bP^{t-1}) (\bu) $$
is independent of the choice of hyperplane $\ell$.
\end{lemma}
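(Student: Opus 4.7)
The plan is to construct an explicit $\bb{A}^1$-homotopy between $i_{\ell_0}$ and $i_{\ell_1}$ at the presheaf level, then conclude via the $\bb{A}^1$-homotopy invariance technique of Section \ref{sec:three} (cf.~\cite[Lemma 6.6]{FV}). The geometric ingredient I would use is that any two linear embeddings $\bb{P}^{t-1} \hookrightarrow \bb{P}^t$ can be joined by an algebraic family: the scheme of linear embeddings is a $PGL_{t+1} \times PGL_t$-homogeneous space, and each of these groups is generated by elementary matrices each lying in an explicit $\bb{A}^1$-family containing the identity. Chaining finitely many such families would reduce the problem to the case of a single morphism $\Phi : \bb{A}^1 \times \bb{P}^{t-1} \to \bb{P}^t$ with $\Phi_0 = \ell_0$, $\Phi_1 = \ell_1$, and each $\Phi_s$ a linear embedding.

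Given such $\Phi$, I would set $\tilde \Phi : \bb{A}^1 \times \bb{P}^{t-1} \to \bb{A}^1 \times \bb{P}^t$, $(s, x) \mapsto (s, \Phi_s(x))$, which is a closed embedding over $\bb{A}^1$, and then define the natural transformation
$$H : z^{t-1, \ol p}(X, \bb{P}^{t-1})(-) \longrightarrow z^{t, \ol p}(X, \bb{P}^t)(- \times \bb{A}^1)$$
sending $\alpha \in z^{t-1, \ol p}(X, \bb{P}^{t-1})(U)$ to $(\id_{U \times X} \times \tilde \Phi)_* (\alpha \times_k \bb{A}^1)$, viewed as a $(U \times \bb{A}^1)$-relative cycle on $U \times \bb{A}^1 \times X \times \bb{P}^t$. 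Pushforward along the closed embedding $\id \times \tilde \Phi$ preserves the property of being a relative cycle with well-defined specializations \cite[Cor.~3.6.3]{SV}.

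The crucial step will be verifying that $H(\alpha)$ actually lies in $z^{t, \ol p}(X, \bb{P}^t)(U \times \bb{A}^1)$, which is a fiberwise condition. At $(u, s) \in U \times \bb{A}^1$ the specialization equals $(\id_{X_u} \times \Phi_s)_* (\alpha_u)$, and since $\Phi_s$ is a closed embedding one has
$$|(\id \times \Phi_s)_*(\alpha_u)| \cap (x \times \bb{P}^t) \;=\; \Phi_s\bigl(|\alpha_u| \cap (x \times \bb{P}^{t-1})\bigr)$$
for each $x \in X_u$, an equality of varieties of the same dimension. The perversity bound over $x \in X^i - X^{i+1}$ is numerically $p_i$ both for $Z^{t, \ol p}(X_u, \bb{P}^t)$ (namely $t - t + p_i$) and for $Z^{t-1, \ol p}(X_u, \bb{P}^{t-1})$ (namely $(t-1) - (t-1) + p_i$), so the condition transfers directly.

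By construction $H_0 = i_{\ell_0}$ and $H_1 = i_{\ell_1}$. Applying \cite[Lemma 6.6]{FV} to $H$ will then give the desired simplicial homotopy $i_{\ell_0}(\bu) \simeq i_{\ell_1}(\bu)$. The second claim follows formally: the (homotopy) cofiber of $i_\ell$ depends on $\ell$ only up to canonical homotopy equivalence, and postcomposing $H$ with the quotient projection exhibits $p_{\ell_0}$ and $p_{\ell_1}$ as homotopic maps after this identification. The main technical obstacle is the perversity verification of paragraph three, which ultimately reduces to matching numerical bounds under pushforward along a closed embedding.
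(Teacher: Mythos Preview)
Your proof is correct and follows essentially the same strategy as the paper's: build an $\bA^1$-family of linear maps and invoke \cite[Lemma 6.6]{FV}. The one organizational difference is worth noting. The paper acts on the \emph{target} $\bP^t$: it picks $\theta \in PGL_{t+1}$ with $\theta \circ \ell = \ell'$, connects $\theta$ to the identity by a path $f: \bA^1 \to PGL_{t+1}$, and pulls back along the resulting action map $\bA^1 \times \bP^t \to \bP^t$ to obtain a self-homotopy $\Theta$ of $z^{t,\ol p}(X,\bP^t)(-)$. Precomposing $\Theta$ with $i_\ell$ gives the homotopy $i_\ell \simeq i_{\ell'}$. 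Your version instead pushes forward along a family of embeddings $\bP^{t-1} \hookrightarrow \bP^t$, which is the composition of the paper's $\Theta$ with $i_\ell$ from the outset.

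The payoff of the paper's framing is the second statement. Because $\theta$ is an automorphism of $z^{t,\ol p}(X,\bP^t)(\bu)$ carrying $\im(i_\ell)$ isomorphically onto $\im(i_{\ell'})$, it induces an honest isomorphism $\ol\theta$ between the two quotients, and the square $\ol\theta \circ p_\ell = p_{\ell'} \circ \theta$ commutes on the nose; since $\theta \simeq \id$, one reads off $p_\ell \simeq p_{\ell'}$ directly. Your deduction (``cofibers of homotopic maps are canonically equivalent, compatibly with the projections'') is valid general nonsense about simplicial abelian groups, but the paper's route avoids having to invoke it.
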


\begin{proof}
Let $\ell, \ell^\prime: \bP^{t-1} \to \bP^t$ be two linear embeddings and let $\theta \in PGL_{t+1}$ satisfy the condition
that $\theta\circ \ell = \ell^\prime$.  Choose a map $f: \bA^1 \to PGL_{t+1}$ with $f(0) = \id, \ f(1) = \theta$. 
The action of $PGL_{t+1}$ on $\bP^t$ and the morphism $f$ determine a morphism $\bb{A}^1 \times \PP^t \to \PP^t$.  Pulling back along this morphism determines a morphism of sheaves
$$\Theta: z^{t,\ol p}(X,\bP^t) (-)  \ \to \ z^{t,\ol p}(X,\bP^t) (- \times \bb{A}^1)$$
such that the composition  
$$\Theta \circ i_\ell : z^{t-1,\ol p}(X,\bP^{t-1}) (-)  \ \to \ z^{t,\ol p}(X,\bP^t) (- \times \bb{A}^1)$$  
is  a homotopy relating $i_\ell$ (restriction to $( - \times \{ 0 \} )$) and $i_{\ell^\prime}$  (restriction to $( - \times \{ 1 \} )$).

To prove the second observation, observe that we have a commutative square
\begin{equation}
\label{theta-commute}
\xymatrix{z^{t,\ol p}(X,\bP^t)(\bu)  \ar[r]^-{p_\ell} \ar[d]_\theta & z^{t,\ol p}(X,\bP^t)(\bu)/z^{t-1,\ol p}(X,\bP^{t-1})(\bu)  \ar[d]_{\ol \theta} \\
z^{t,\ol p}(X,\bP^t)(\bu)  \ar[r]_-{p_{\ell^\prime}} &  z^{t,\ol p}(X,\bP^t)(\bu)/z^{t-1,\ol p}(X,\bP^{t-1})(\bu)  . } 
\end{equation}
Here, $\ol \theta$ is the map on quotients induced by $\theta$; both $\theta, \ \ol \theta$ are isomorphisms.  Since $\theta$
is homotopic to the identity, we conclude that $p_\ell, \ p_{\ell^\prime}$ are homotopic.
\end{proof}

\begin{defn} \label{defn:pervcoh}
First we define a simplicial abelian group
$$z^{t,\ol p}(X) (\bu) := z^{t, \ol p}(X, \PP^t) (\bu) /  z^{t-1, \ol p}(X, \PP^{t-1}) (\bu) ;$$
this is canonical by Lemma $\ref{hyperplane}$.  The perversity $\ol p$ motivic cohomology groups are then defined to be its homotopy groups:
 $$H^{i, t, \ol p}(X) \ \equiv \ \pi_{2t-i}  (z^{t,\ol p}(X) (\bu)).$$

If Voevodsky acyclicity is available, there is a canonical homotopy equivalence
$$ ( z^{t, \ol p}(X, \PP^t) /  z^{t-1, \ol p}(X, \PP^{t-1}) )_{cdh} (\bu)  \stackrel{\sim}{\to} z^{t, \ol p}(X, \PP^t) (\bu) /  z^{t-1, \ol p}(X, \PP^{t-1}) (\bu) $$
and so
one is not really forced to choose between the quotient simplicial abelian group and evaluating the quotient sheaf on $\Deltadot$.
\end{defn}

\begin{remark} \label{FV compare}
If $X$ is smooth, $k$ admits resolutions of singularities, and $\ol p$ is the zero perversity, then we recover the motivic cohomology groups of Friedlander-Voevodsky: $H^{i, t , \ol 0}(X) = H^i(X, \bb{Z}(t)).$  This follows from \cite[Prop.~6.4, Thm.~8.1, Thm.~8.2]{FV}.  
One reason this comparison is likely to fail for singular $X$ is that the zero perversity condition on a cycle does not imply it has well-defined specializations (let alone with universally integral coefficients), whereas the groups $H^i(X, \bb{Z}(t))$ are defined using cycles which have well-defined specializations for all $x \in X$.
If $X$ is smooth, then the zero perversity condition on a cycle (i.e., the condition used to define  $H^{i, t , \ol 0}(X)$) implies it has well-defined specializations for all $x \in X$ by \cite[Cor.~3.4.5]{SV}.
\end{remark}

{The following proposition relates generalized cocycles to Chow varieties when the covariant variable is projective.}

\begin{prop} \label{cocycle basic}
Let $X$ be a stratified quasi-projective variety of dimension $d$ and $Y,\ T$ be
projective varieties of dimension $n,\ m$ respectively.
Let $W \into U \times X \times Y$ be an element of $z^{t, \ol p}(X,Y)(U)$.
\begin{enumerate}

\item  For every $u \in U$, every component of the specialization $W_u$ 
is the closure of the cycle associated to a rational map $f_u : X_u \dashrightarrow \cC_{n-t}(Y)$ defined on $(X-X^1)_u$.

\item For any fat point $(x_0, x_1, R)$ over $u \in U$ there is a rational map $\wt{f} : X_R \dashrightarrow \cC_{n-t}(Y)$ defined on $(X-X^1)_R$ such that the compositions (set $K := \FRAC R$):
$$X_k \xrightarrow{\id_X \times x_0} X_R  \stackrel{\wt{f}}{\dashrightarrow} \cC_{n-t}(Y)  \ \ \text{and} \ \  X_K \to X_R  \stackrel{\wt{f}}{\dashrightarrow} \cC_{n-t}(Y)$$
coincide with
$$X_k \to X_u \stackrel{f_u}{\dashrightarrow} \cC_{n-t}(Y)  \ \  \text{and} \ \   X_K \to X_{\eta_U}  \stackrel{f_{\eta_U}}{\dashrightarrow} \cC_{n-t}(Y).$$

\item
For any continuous algebraic map $g: \cC_{n-t}(Y) \to \cC_{m-s}(T)$, the
closure of the cycle associated to $g \circ f_{\eta_U}: X_{\eta_U} \dashrightarrow \cC_{m-s}(T)$, denoted $W_g$,
is an element of $z^{s,\ol p}(X,T)(U)$.

\item
For any continuous algebraic map $h:  \cC_{n-t}(Y) \times  \bb{A}^1\to \cC_{m-s}(T) $, the
closure of the cycle associated to 
$h \circ (f_{\eta_U} \times \id_{\bb{A}^1}) : X_{\eta_U}  \times \bb{A}^1  \dashrightarrow \cC_{m-s}(T)$, denoted ${(W_{\bb{A}^1})}_h$,
is an element of $z^{t,\ol p}(X,T)(U \times \bb{A}^1)$.  The formation of ${(W_{\bb{A}^1})}_h$ is compatible with restriction to $t \in \bb{A}^1$ in the sense that the image of ${(W_{\bb{A}^1})}_h$ in $z^{t,\ol p}(X,T)(U \times \{ t \})$ coincides with $W_{h_t}$.

\end{enumerate}
\end{prop}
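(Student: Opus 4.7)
The plan is to use (1) and (2) to realize $W$ via rational maps to Chow varieties, then derive (3) and (4) by postcomposing with $g$ and $h$ and taking closures. The bookkeeping for the perversity bound in (3) and (4) is the main technical point.

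For (1), the cocycle condition defining $z^{t,\ol p}(X,Y)$ forces $W_u|_{(X - X^1)_u \times Y}$ to be equidimensional of relative dimension $n - t$ over the smooth scheme $(X - X^1)_u$ (using $X_{sing} \subseteq X^1$). By the representability recalled in Section \ref{sec:one} after equation (\ref{eq:eff}), this equidimensional cycle corresponds to a morphism from the semi-normalization of $(X - X^1)_u$ to $\cC_{n-t}(Y)$ (up to the field-of-definition condition in positive characteristic), hence to a rational map $f_u: X_u \dashrightarrow \cC_{n-t}(Y)$ defined on $(X - X^1)_u$. A dimension count using the consequence $p_i \leq i - 1$ (from $p_1 = 0$ and $p_{i+1} \leq p_i + 1$) rules out components of $W_u$ contained in $X^i \times Y$ for $i \geq 1$: such a $(d + n - t)$-dimensional component inside a subvariety of dimension $\leq d - i + n$ would have generic fiber dimension $\geq n - t + i > n - t + p_i$, contradicting the perversity bound. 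Hence every component of $W_u$ meets $(X - X^1)_u \times Y$ and equals the closure of the corresponding cycle of $f_u$.

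For (2), apply the same argument to $x_1^* W$ on $X_R \times Y$, whose fiberwise behavior still satisfies the cocycle condition. Its restriction to $(X - X^1)_R \times Y$ is equidimensional over the smooth $R$-scheme $(X - X^1)_R$, yielding a morphism on semi-normalizations and hence the rational map $\wt{f}$. The two claimed compatibilities are the functoriality of the Suslin-Voevodsky classifying morphism under the base changes $\id_X \times x_0 : X_k \to X_R$ and $X_K \to X_R$.

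For (3), define $W_g$ as the closure in $U \times X \times T$ of the cycle associated to $g \circ f_{\eta_U}$, a cycle of dimension $\dim U + d + m - s$. Well-defined specializations follow from (2): for each fat point $(x_0, x_1, R)$ over $u$, the closure in $X_R \times T$ of the cycle of $g \circ \wt{f}$ specializes at the closed point to the closure of the cycle of $g \circ f_u$, independently of the fat point, yielding $(W_g)_u$ with universally integral coefficients. The cocycle condition on $(X - X^1)_u$ is immediate: the fiber of $(W_g)_u$ at $x \in (X - X^1)_u$ is $g(f_u(x))$, an $(m - s)$-cycle. For the perversity bound at $x \in X^i - X^{i+1}$, resolve the indeterminacy of $f_u$ by a proper birational morphism $\pi : \wt{X}_u \to X_u$ so that $f_u$ extends to a morphism $\wt{f} : \wt{X}_u \to \cC_{n-t}(Y)$, and let $E_x := \pi^{-1}(x)$. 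The fiber of $W_u$ at $x$ then equals $\bigcup_{\wt{x} \in E_x} |\wt{f}(\wt{x})|$, so the perversity bound forces $\dim \wt{f}(E_x) \leq p_i$ (assuming the universal-cycle projection to $Y$ is generically finite over $\wt{f}(E_x)$, which can be arranged by further modifying the resolution). Since $g$ is continuous algebraic, $\dim g(\wt{f}(E_x)) \leq \dim \wt{f}(E_x) \leq p_i$, so the fiber of $(W_g)_u$ at $x$, equal to $\bigcup_{\wt{x} \in E_x} |g(\wt{f}(\wt{x}))|$, has dimension at most $p_i + (m - s)$. For (4), apply the same construction with $U \times \bb{A}^1$ in place of $U$ and $h$ in place of $g$; the perversity bound is verified identically by treating $h$ as a family of maps parametrized by $\bb{A}^1$, and the compatibility $(W_{\bb{A}^1})_h |_{U \times \{t\}} = W_{h_t}$ is immediate from the commutativity of closure with pullback along the closed immersion $U \times \{t\} \hookrightarrow U \times \bb{A}^1$, together with $h \circ (- \times \id_{\bb{A}^1}) |_{- \times \{t\}} = h_t \circ (-)$. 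The main technical obstacle is the dimension bound $\dim \wt{f}(E_x) \leq p_i$ in (3), which requires a careful choice of resolution and an analysis of the universal-cycle projection to ensure the perversity bound on $W$ transfers cleanly to a bound on the indeterminacy image of $f_u$.
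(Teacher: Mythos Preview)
Your approach to (1), (2) and (4) matches the paper's: use smoothness of $X-X^1$ to pass to equidimensional relative cycles and hence to maps into Chow varieties, then invoke functoriality under base change. For (3) the paper does essentially what you do, with one simplification: instead of a resolution $\wt X_u\to X_u$, it uses the graph closure $X'\subset X\times\cC_{n-t}(Y)$ with projections $\pi:X'\to X$ and $c:X'\to\cC_{n-t}(Y)$. Because $X'$ sits inside the product, $c$ is automatically injective on each fiber $\pi^{-1}(x)$, so $\dim\im(c|_{\pi^{-1}(x)})=\dim\pi^{-1}(x)$; this removes your worry about having to ``further modify the resolution'' to control the map \emph{into} the Chow variety. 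The paper then simply asserts the two equalities
\[
\dim W_x=(n-t)+\dim\im\bigl(c|_{\pi^{-1}(x)}\bigr),\qquad
\dim (W_g)_x=(m-s)+\dim\im\bigl(g\circ c|_{\pi^{-1}(x)}\bigr),
\]
and concludes from $\dim\im(g\circ c)\le\dim\im(c)$.

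You are right to flag the perversity transfer as the main obstacle, and your caveat is exactly the content of the paper's asserted equalities: one needs the projection to $Y$ of the universal family over $\im(c|_{\pi^{-1}(x)})\subset\cC_{n-t}(Y)$ to be generically finite. Your proposed fix does not address this, because the issue lives entirely on the Chow-variety side and is insensitive to the choice of birational model of $X$. Indeed the equality can fail outright: if $\pi^{-1}(x)$ happens to be all of $\bP^{2*}=\cC_{1,1}(\bP^2)$, the universal family over it is the $3$-dimensional incidence correspondence, whose image in $Y=\bP^2$ has dimension $2$, not $(n-t)+\dim\pi^{-1}(x)=1+2=3$. So the inequality $\dim\wt f(E_x)\le p_i$ you need cannot in general be extracted from the hypothesis $\dim W_x\le (n-t)+p_i$, and composing with an embedding $g$ (e.g.\ a Veronese $\bP^{2*}\hookrightarrow\bP^5=\cC_{0,1}(\bP^5)$) can then produce $(W_g)_x$ of dimension strictly larger than $(m-s)+p_i$. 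Both your argument and the paper's leave this point open; the paper's graph-closure formulation is cleaner, but the gap is the same.
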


\begin{proof} 
Let $W'_u \into X_u \times Y$ denote a component of the specialization of $W$ at some $u \in U$.
Since $X - X^1$ is smooth, the restriction $W'_u |_{(X - X^1)_{u}}$ is an element of $z_{equi}(Y_{},n-t)((X - X^1)_{u})$ \cite[Cor.~3.4.5]{SV}, and there is a canonical inclusion $z_{equi}(Y_{},n-t)((X - X^1)_{u}) \subseteq \Hom((X - X^1)_{u}, \cC_{n-t}( Y))^+$.  This establishes (1).

The perversity condition implies that for any $u \in U$, all of the generic points of $W_u$ lie in $(X - X^1)_{u} \times Y$, so to verify (2) we may restrict to $X-X^1$, where all of the rational maps are defined.  Since $Y$ is projective the pullbacks on $z_{equi}(Y_{},n-t)$ correspond to composition of morphisms to Chow varieties.

Now we show $W_g$ has well-defined specializations.  
The specializations are determined by the generic points of the cycle ${(W_g)}_{\eta} \into X_\eta \times Y$, where $\eta$ denotes the union $\cup \eta_U$ of the generic points of $U$.  But both $W_{\eta}$ and ${(W_g)}_{\eta}$ have their generic points in $(X - X^1)_\eta \times Y$, so we may restrict to $X - X^1$.  Since specialization corresponds to restriction of morphisms to Chow varieties, the specializations of ${(W_g)}_{\eta}$ are determined by those of $W_\eta$.  Since the latter do not depend on the fat point, the former are independent as well.

To verify (3), it remains to show the perversity condition is preserved.  We may assume $U$ is the spectrum of a field.
Let $X' \into X \times \cC_{n-t}(Y)$ be the graph of the rational map, and let $\pi: X' \to X \ , \ c: X' \to \cC_{n-t}(Y)$ denote the induced morphisms. 
For any $x \in X$ we have the following formulas for the dimensions of the fibers $W_x, \ (W_g)_x$:
$$\dim(W_x) = (n-t) + \dim(\im(c : \pi^{-1}(x) \to \cC_{n-t}(Y))) $$
$$\dim((W_g)_x) = (m-s) + \dim(\im(g \circ c : \pi^{-1}(x) \to \cC_{m-s}(T))) $$
Clearly $\dim(\im(g \circ c : \pi^{-1}(x) \to \cC_{m-s}(T)))  \leq \dim(\im(c : \pi^{-1}(x) \to \cC_{n-t}(Y))) $, so the perversity of $W_g$ is no worse than that of $W$.  The verification of (4) is similar and we omit the details.
\end{proof}

We denote by $z^{t, \ol p}(X, \Sigma(Y))_Y \subset z^{t, \ol p}(X, \Sigma(Y))$ the subpresheaf consisting of $U$-relative cycles $W$ 
none of whose specializations $W_u \into X_u \times \Sigma(Y)$ have components
contained in the Cartier divisor $X_u \times Y \into X_u \times \Sigma(Y)$, and
satisfy the property that $W_u \cap (X_u \times Y)$ belongs to $Z^{t, \ol p}(X_u,Y)$.

In the proof of the following theorem, we employ the same moving constructions which we 
used  in the proof of Theorem \ref{thm:proj-susp}.

\begin{thm} \label{thm:suspgen} Let $X$ be a stratified quasi-projective variety, let $Y$ be a projective variety, 
and let $\ol p$ be a perversity.  
Equip $\Sigma(X)$ with the stratification $\{ \Sigma(X^i) \}$, where $\{ X^i \}$ is the given stratification of $X$.
Fiberwise  suspension  induces homotopy equivalences
$$\Sigma_Y: z_{equi}^{t, \ol p}(X, Y)(\bullet) \  \stackrel{\sim}{\to} \  z_{equi}^{t, \ol p}(X, \Sigma(Y))(\bullet),  $$
$$\Sigma_Y: z_{}^{t, \ol p}(X, Y)(\bullet)  \  \stackrel{\sim}{\to} \   z_{}^{t, \ol p}(X, \Sigma(Y))(\bullet).$$
Therefore we have an induced isomorphism $H^{i, t, \ol p}(X,Y) \cong H^{i, t, \ol p}(X, \Sigma(Y))$.
\end{thm}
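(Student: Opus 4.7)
The plan is to adapt the proof of Theorem \ref{thm:proj-susp}, applying the Lawson-style moving constructions to the projective covariant variable $Y$ in place of the base variety. Let $z^{t, \ol p}(X, \Sigma(Y))_Y$ denote the intermediate subpresheaf defined immediately before the theorem statement, consisting of those $U$-relative cycles on $X \times \Sigma(Y)$ whose specializations meet the Cartier divisor $X \times Y \hookrightarrow X \times \Sigma(Y)$ properly, with the resulting intersection belonging to $Z^{t, \ol p}(X_u, Y)$. Factor the fiberwise suspension as
$$z^{t, \ol p}(X, Y) \xrightarrow{\Sigma_Y} z^{t, \ol p}(X, \Sigma(Y))_Y \hookrightarrow z^{t, \ol p}(X, \Sigma(Y)),$$
and similarly for the $equi$-theory. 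I will show each map induces a homotopy equivalence after evaluation at $\Deltadot$; the theorem then follows, and the claimed isomorphism on $H^{i,t,\ol p}$ is obtained by passing to homotopy groups.

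For the first arrow, I would import the deformation-to-the-normal-cone morphism $\varphi \colon \cC_{n-t+1}(\Sigma(Y))_Y \times \bb{A}^1 \to \cC_{n-t+1}(\Sigma(Y))_Y$ from \cite[Prop.~3.2]{F} used in Proposition \ref{prop:iso1}. By Proposition \ref{cocycle basic}(1), a cocycle $W \in z^{t, \ol p}(X, \Sigma(Y))_Y(U)$ corresponds on each generic point $\eta \in U$ to a rational map $f_\eta \colon X_\eta \dashrightarrow \cC_{n-t+1}(\Sigma(Y))_Y$ defined on $(X - X^1)_\eta$. Composition with $\varphi$ yields a rational map $X_\eta \times \bb{A}^1 \dashrightarrow \cC_{n-t+1}(\Sigma(Y))_Y$, whose closure, by Proposition \ref{cocycle basic}(4), defines a homotopy relating the identity ($t=0$) to the map $\alpha \mapsto \Sigma_Y(\alpha \cap (X \times Y))$ ($t=1$), acting trivially on the image of $\Sigma_Y$. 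Then \cite[Lemma 6.6]{FV} delivers the homotopy equivalence.

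For the second arrow, I would use the ``magic fans'' morphism $\psi_e \colon \cC_{n-t+1, \leq d}(\Sigma(Y)) \times \bb{A}^1 \to \cC_{n-t+1, \leq de}(\Sigma(Y))$ of \cite[Prop.~3.5]{F}, together with the difference construction (\ref{psi_e}) from the proof of Proposition \ref{prop:iso2}. Composition of the associated rational maps as in Proposition \ref{cocycle basic}(4) produces a homotopy from the identity on $z^{t, \ol p}(X, \Sigma(Y))$ to a morphism factoring through $z^{t, \ol p}(X, \Sigma(Y))_Y$; once again \cite[Lemma 6.6]{FV} concludes. Passing between the $equi$ and cdh versions is handled by Proposition \ref{vanishing}, exactly as in the proofs of Propositions \ref{prop:iso1} and \ref{prop:iso2}.

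The main obstacle will be verifying that the generalized cocycle perversity condition is preserved under these fiberwise operations on $\Sigma(Y)$. The issue is that the perversity controls the fiber dimensions $\dim(|W_u| \cap (x \times \Sigma(Y)))$ for $x$ in each stratum of $X$, and one must check that replacing $W$ by the closure of the cycle determined by $\varphi \circ f_\eta$ (or by $\psi_e \circ f_\eta$) cannot worsen these fiber dimensions. This is exactly the content of Proposition \ref{cocycle basic}(3,4): composition with a continuous algebraic map on Chow varieties can only shrink the image $\im(g \circ c\colon \pi^{-1}(x) \to \cC_{\ast}(T))$ and hence only decreases the excess fiber dimension, so the perversity condition $\ol p$ is preserved by both $\varphi$ and $\psi_e$. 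In particular, the field-of-definition condition (needed in positive characteristic) is preserved for the same reasons recorded in the proofs of Propositions \ref{prop:iso1} and \ref{prop:iso2}, since both moving operations act by automorphisms defined over $k$ (for $t \neq 1$) or by elimination of the suspension coordinate (at $t = 1$).
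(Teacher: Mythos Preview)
Your overall strategy matches the paper's: factor $\Sigma_Y$ through the intermediate presheaf $z^{t,\ol p}(X,\Sigma(Y))_Y$ and use $\varphi$ and $\psi_e$ respectively for the two steps, invoking Proposition~\ref{cocycle basic}(3,4) to transport the Chow-variety constructions to generalized cocycles. The treatment of the first arrow is essentially what the paper does (the paper adds the one-line observation that ``the closure of a suspension is a suspension'' to justify that $W_{\varphi_1}$ really lies in $\Sigma_Y(z^{t,\ol p}(X,Y))$, which you should make explicit).

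There is, however, a genuine gap in your handling of the second arrow. Proposition~\ref{cocycle basic}(3,4) guarantees only that $W_{\psi_e}$ lies in $z^{t,\ol p}(X,\Sigma(Y))$; it says nothing about the stronger ``$_Y$'' condition, namely that $W_u\cap(X_u\times Y)\in Z^{t,\ol p}(X_u,Y)$. The difficulty is that $\cC_{n-t+1}(\Sigma(Y))_Y\subset\cC_{n-t+1}(\Sigma(Y))$ is \emph{open}, so knowing that the rational map $X\dashrightarrow\cC_{n-t+1}(\Sigma(Y))$ lands in this open on the domain of definition $X-X^1$ does not control what happens over $X^1$: a fiber $(W_{\psi_e})_x$ for $x\in X^1$ could a priori have a top-dimensional component contained in $x\times Y$, violating the required bound on $\dim\big((W_{\psi_e})_x\cap(x\times Y)\big)$. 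The paper isolates this as the delicate point and supplies an extra argument: for generic $t$ (say $t=1$) the map $\psi_1$ is a \emph{closed immersion} of Chow varieties (by semicontinuity, since $\psi_0$ is), so the graph of $\psi_1\circ f$ is isomorphic to the graph $X'$ of $f$ itself. One can then cover the fiber of $W_{\psi_1}$ over any $x\in X$ by the cycles $\psi_1(c(x'))$ for $x'\in\pi^{-1}(x)\subset X'$, each of which lies in $\cC_{n-t+1}(\Sigma(Y))_Y$ and hence meets $Y$ properly. This yields the needed bound over every stratum, not just over $X-X^1$.

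A smaller point: you invoke Proposition~\ref{vanishing} to pass between the $equi$ and cdh versions, but that proposition is stated for the incidence-datum presheaves $z_{equi}(X,r)_{\cal T,p}\to z(X,r)_{\cal T,p}$, not for the cocycle presheaves $z^{t,\ol p}_{equi}(X,Y)\to z^{t,\ol p}(X,Y)$, and it requires resolution of singularities. The paper instead runs the identical Chow-variety argument for both theories directly, avoiding that hypothesis.
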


\begin{proof}
The overall strategy is similar to that employed in the proof of Theorem $\ref{thm:proj-susp}$: deformation to the normal cone and the projecting cones construction provide $\bb{A}^1$-homotopies and allow us to conclude that each of the morphisms:

\begin{equation} \label{gen cocycle factor} z^{t, \ol p}(X,Y) (\bu) \xrightarrow{\Sigma_Y}  z^{t, \ol p}(X,\Sigma(Y))_Y (\bu) \to z^{t, \ol p}(X, \Sigma(Y)) (\bu) \end{equation}
is a homotopy equivalence.  We explain why the constructions given in the proofs of Propositions $\ref{prop:iso1}$ and $\ref{prop:iso2}$ suffice, and we do not repeat the arguments which require only modification of notation.  We write the proof for  $z_{}^{t, \ol p}(X, Y)$, but the same argument works for $z_{equi}^{t, \ol p}(X, Y)$.

The deformation to the normal cone of Proposition $\ref{prop:iso1}$ defines a continuous algebraic map $\varphi: \cC_{n-t}(\Sigma(Y))_Y \times \bb{A}^1 \to \cC_{n-t}(\Sigma(Y))_Y$.  By Proposition $\ref{cocycle basic}$(4), this provides a morphism:
$$\varphi: z^{t, \ol p}(X, \Sigma(Y))_Y (-) \to z^{t, \ol p}(X, \Sigma(Y))_Y ( - \times \bb{A}^1).$$
Let $\varphi_t$ denote the composition of $\varphi$ with restriction to $( - \times \{ t \})$.  We must show:
\begin{itemize}
\item $\varphi_0$ is the identity,
\item $\varphi_1$ has image contained in $\Sigma_Y(z^{t, \ol p}(X,Y))$, and
\item $\varphi_t$ acts as the identity on $\Sigma_Y(z^{t, \ol p}(X,Y))$ for all $t \in \bb{A}^1$.
\end{itemize}

The morphism $\varphi_0$ is induced by the identity on the Chow variety, and $W = W_{\id}$, so the first property is clear.  The third property follows for a similar reason.

To see that the second property holds, note that any specialization ${(W_{\varphi_1})}_u$ is associated to the rational map $X_u \dashrightarrow \cC_{n-t+1}(\Sigma(Y)) \xrightarrow{\varphi_1} \Sigma_Y (\cC_{n-t}(Y)) \into \cC_{n-t+1}(\Sigma(Y))$.  Therefore ${(W_{\varphi_1})}_u |_{X - X^1}$ is a suspension, and the closure of a suspension is a suspension (namely, it is the suspension of the closure!).  Alternatively, the fiber of ${(W_{\varphi_1})_u}$ over $x \in X$ is the image of ${(W_{\varphi_1 \circ c})_u} \cap (\pi^{-1}(x) \times Y) \to x \times Y$, and $\varphi_1 \circ c : X' \to \cC_{n-t+1} (\Sigma (Y))$ factors through $\Sigma_Y (\cC_{n-t}(Y))$, so all of the fiber cycles of $W_{\varphi_1 \circ c} \to X'$ are suspensions.  The image is therefore a suspension as well.  This proves the generalized cocycles analogue of Proposition \ref{prop:iso1} and establishes that the first arrow in $\ref{gen cocycle factor}$ is a homotopy equivalence.

We proceed to analyze the second arrow in $\ref{gen cocycle factor}$.  The projecting cones are slightly more delicate for the simple reason that $\cC_{n-t+1}(\Sigma(Y))_Y \subset \cC_{n-t+1}(\Sigma(Y))$ is open rather than closed, so that we cannot conclude that $X$ lands in $\cC_{n-t+1}(\Sigma(Y))_Y$ simply because $X-X^1$ does.  The construction of Proposition $\ref{prop:iso2}$ provides a morphism:
$$\psi := \psi_e : z^{t, \ol p}(X, \Sigma(Y), \leq d) (-) \to z^{t, \ol p}(X, \Sigma(Y), \leq de) (- \times \bb{A}^1)$$
where $d$ bounds the degree of the cycles on $Y$ and $e$ depends on $d$.
We must show:
\begin{itemize}
\item $\psi_0$ is $e$ times the identity,
\item $\psi_t$ carries $z^{t, \ol p}(X, \Sigma(Y), \leq d) $ into $z^{t, \ol p}(X, \Sigma(Y), \leq de)_Y$ for general $t \in \bb{A}^1$.
\end{itemize}

We have a morphism $\psi : \cC_{n-t+1, \leq d}(\Sigma(Y)) \times \bb{A}^1 \to \cC_{n-t+1, \leq de}(\Sigma(Y))$ which restricts to a closed immersion (namely, $e$ times the identity) at $t=0$.  Therefore there is an open subscheme $S \subset \bb{A}^1$ such that $\psi_t$ is a closed immersion for $t \in S$ by \cite[Lemma I.1.10.1]{Kol}.  {We may assume $1 \in S$}, and then given $W \in z^{t, \ol p}(X,\Sigma(Y))(U)$, our task is to show $W_{\psi_1} \in z^{t, \ol p}(X, \Sigma(Y))_Y(U)$.

Since $\psi_1$ is a closed immersion, the graph of $X \dashrightarrow \cC_{n-t+1, \leq d}(\Sigma(Y)) \xrightarrow{\psi_1} \cC_{n-t+1, \leq de}(\Sigma(Y))_Y \subset \cC_{n-t+1, \leq de}(\Sigma(Y))$ is isomorphic to the graph $X' \into X \times \cC_{n-t+1, \leq d}(\Sigma(Y))$.  This implies all of the specializations of the cycle $W_{\psi_1} \into U \times X \times \Sigma(Y)$ are covered by (birational, proper) surjections ${(W_{\psi_1 \circ c})}_u \to {(W_{\psi_1})}_u$.  The support of ${(W_{\psi_1 \circ c})}_u$ over some $x' \in X'$ is the cycle $\psi_1 (c(x'))$, and none of these $(n-t+1)$-dimensional cycles are contained in $Y \into \Sigma(Y)$.  Therefore, the cycle ${(W_{\psi_1 \circ c})}_u\cap ( \pi^{-1}(x)_u \times \Sigma(Y))$ is not contained in $X_u \times Y \into X_u \times \Sigma(Y)$.
\end{proof}

We will need the following particular case of the proper push-forward morphism.  If $X$ is a stratified variety and 
$i: Y \into Y'$ is a closed immersion of pure codimension $c$, then the push-forward along $i$ determines a morphism 
of presheaves $z^{t, \ol{p}}(X,Y) \to z^{t+c, \ol{p}}(X, Y')$.  In particular, the inclusion of a hyperplane $i: \PP^{s-1} \into \PP^s$ 
induces a morphism $i_* : z^{s-1, \ol{p}}(X,\PP^{s-1}) \to z^{s, \ol{p}}(X, \PP^s)$ of presheaves on $Sch / k$.  The existence 
of $i_*$ follows from the existence of proper push-forward functors on the presheaves $z(X,r)$ and $z_{equi}(X,r)$ \cite[Cor.~3.6.3]{SV}.  Alternatively, $i_*$ is the morphism provided by Proposition $\ref{cocycle basic}$(3) for the continuous algebraic map $\cC_0(\PP^{s-1}) \to \cC_0(\PP^s)$.

\begin{lemma}
\label{lem:com}
Let $X$ be a stratified quasi-projective variety, and let $\ol p$ be a perversity. 
The following square is homotopy commutative:
\begin{equation}
\label{sigma-commute}
\xymatrix{z^{t-1,\ol p}(X,\bP^{t-1})(\bu)  \ar[r] \ar[d]_{\Sigma^i} & z^{t,\ol p}(X,\bP^t)(\bu)  \ar[d]_{\Sigma^i} \\
z^{t-1,\ol p}(X,\bP^{t+i-1})(\bu)  \ar[r] &  z^{t,\ol p}(X,\bP^{t+i})(\bu). } 
\end{equation}
\end{lemma}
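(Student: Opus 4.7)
The plan is to reduce the homotopy commutativity of the square to two ingredients: the naturality of fiberwise suspension with respect to closed immersions of the covariant variable, and Lemma \ref{hyperplane}, which absorbs the freedom in the choice of hyperplane.

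First, I would fix an explicit identification $\Sigma^j \bP^a = \bP^{a+j}$ for all $a, j \geq 0$ by choosing coordinates so that each single suspension $\bP^a \hookrightarrow \Sigma \bP^a = \bP^{a+1}$ is the hyperplane ``last coordinate equals zero'', with suspension vertex $[0:\cdots:0:1]$. Under these identifications, $\bP^a$ sits in $\bP^{a+j}$ as the linear subspace where the last $j$ coordinates vanish, and each iterated suspension vertex is a standard basis vector.

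Next, I would establish the naturality of fiberwise suspension with respect to closed immersions. Given $\ell : Y_1 \hookrightarrow Y_2$, the induced $\Sigma \ell : \Sigma Y_1 \hookrightarrow \Sigma Y_2$ sends the suspension vertex of $\Sigma Y_1$ to that of $\Sigma Y_2$, so the presheaf-level identity
$$(\id_X \times \Sigma \ell)_* \circ \Sigma_{Y_1} \ = \ \Sigma_{Y_2} \circ (\id_X \times \ell)_*$$
holds as morphisms of the relevant subpresheaves of generalized cocycles. The verification is a set-theoretic check in the style of Proposition \ref{cocycle basic}: both composites send a cycle $W \subset U \times X \times Y_1$ to the cone on $(\id_X \times \ell)_* W$ with the suspension vertex of $\Sigma Y_2$, computed fiberwise over $U \times X$. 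Since suspension and proper push-forward commute with specialization, respect multiplicities, and preserve the perversity and field-of-definition conditions, this equality is valid on $z^{t, \ol p}$ (and its equidimensional variant).

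Applying this identity iteratively to the top hyperplane $\ell : \bP^{t-1} \hookrightarrow \bP^t$ produces a preferred bottom closed embedding $\ell^\sharp := \Sigma^i \ell$ which, under the coordinate identifications above, is a hyperplane embedding $\bP^{t+i-1} \hookrightarrow \bP^{t+i}$. The square in which the bottom horizontal is $(\ell^\sharp)_*$ then commutes on the nose. By Lemma \ref{hyperplane}, the push-forward along any other hyperplane inclusion $\bP^{t+i-1} \hookrightarrow \bP^{t+i}$ is homotopic to $(\ell^\sharp)_*$, and the same applies to the top row; hence the original square is homotopy commutative for arbitrary choices of horizontal arrows.

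The main obstacle will be to verify the naturality identity $(\id_X \times \Sigma \ell)_* \circ \Sigma_{Y_1} = \Sigma_{Y_2} \circ (\id_X \times \ell)_*$ as an identity of cycles (not merely of supports), and to check that this identification passes correctly through the cdh-sheafification / continuous-algebraic-maps formulation, where suspension and push-forward can interact delicately with multiplicities and fields of definition. This should follow from the explicit description of suspension as a cone with a distinguished vertex, together with the fact that $\Sigma \ell$ carries this vertex correctly, both operations then respecting multiplicities as in Section \ref{sec:three}.
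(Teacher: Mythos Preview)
Your proposal is correct and follows essentially the same idea as the paper's proof: the two paths around the square differ only by the choice of where the ``extra'' hyperplane coordinate sits inside $\bP^{t+i}$, and this discrepancy is absorbed by an $\bA^1$-family of automorphisms of $\bP^{t+i}$. The paper states this directly in one line (``related by an $\bA^1$-family of automorphisms of $\bP^{t+i}$''), whereas you decompose the argument into (i) a strict commutativity for the preferred bottom arrow $\ell^\sharp = \Sigma^i \ell$ via naturality of fiberwise suspension with respect to push-forward, followed by (ii) an invocation of Lemma~\ref{hyperplane} to pass to an arbitrary hyperplane; but Lemma~\ref{hyperplane} is itself proved by exactly such an $\bA^1$-family, so the content is the same.
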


\begin{proof}
The two compositions of the square (\ref{sigma-commute}) are given  by first embedding $\bP^{t-1}$ in $\bP^t$,
then suspending $i$-times; and by first suspending $i$-times, then embedding $\bP^{t+i-1}$ in $\bP^{t+i}$.
These are readily seen to be related by an $\bA^1$-family of automorphisms of $\bP^{t+i}$, and
the required homotopy is obtained by composing with
these automorphisms.
\end{proof}

\begin{thm}
\label{iterated suspension}
Let $X$ be a stratified quasi-projective variety, and let $\ol p$ be a perversity.  The fiberwise suspension map 
(with respect to $\bP^t$) induces a homotopy equivalence

$$z^{t,\ol p}(X)(\bullet) \ \stackrel{\sim}{\to} \ z^{t,\ol p}(X,\bP^{t+i})(\bullet) / z^{t-1,\ol p}(X,\bP^{t+i-1})(\bullet).$$
\end{thm}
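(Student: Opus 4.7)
The plan is to combine the suspension isomorphism for generalized cocycles (Theorem \ref{thm:suspgen}) with the square of Lemma \ref{lem:com} in a five-lemma argument on the long exact sequences of homotopy groups arising from the quotient constructions.

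First, I would iterate Theorem \ref{thm:suspgen} $i$ times to conclude that for each $s \geq 0$ the iterated fiberwise suspension
$$\Sigma^i\colon z^{s,\ol p}(X,\bP^{s})(\bu) \to z^{s,\ol p}(X,\bP^{s+i})(\bu)$$
is a homotopy equivalence. Next, I would observe that the horizontal arrows in the square of Lemma \ref{lem:com}, being pushforward along a hyperplane inclusion, are termwise injective maps of simplicial abelian groups. They therefore fit into short exact sequences
$$0 \to z^{t-1,\ol p}(X,\bP^{t-1})(\bu) \to z^{t,\ol p}(X,\bP^{t})(\bu) \to z^{t,\ol p}(X)(\bu) \to 0$$
and analogously in degree $t+i$, both producing long exact sequences of homotopy groups.

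To assemble these into a ladder I would sidestep the fact that Lemma \ref{lem:com} produces only homotopy commutativity by choosing the hyperplane $\bP^{t+i-1} \hookrightarrow \bP^{t+i}$ used to define the bottom quotient to be $\Sigma^i$ applied to the hyperplane $\bP^{t-1} \hookrightarrow \bP^{t}$ used in the top quotient. With this choice the square of Lemma \ref{lem:com} commutes strictly on the nose, since fiberwise suspension commutes literally with pushforward along the suspended hyperplane; and by Lemma \ref{hyperplane} the homotopy type of the bottom quotient is independent of the hyperplane choice. This yields a strictly commutative ladder of long exact sequences in which the columns corresponding to the first two terms are isomorphisms by the first step. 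The five-lemma then forces the induced map on the quotient column
$$\pi_*(z^{t,\ol p}(X)(\bu)) \to \pi_*\bigl(z^{t,\ol p}(X,\bP^{t+i})(\bu)/z^{t-1,\ol p}(X,\bP^{t+i-1})(\bu)\bigr)$$
to be an isomorphism in every degree.

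I expect the main technical care to lie in upgrading the homotopy commutativity of Lemma \ref{lem:com} to something strict enough for the five-lemma. Homotopy commutativity of the two squares alone would in general produce only a map of cofibers determined up to an undetermined correction, but the flexibility afforded by Lemma \ref{hyperplane} lets us select hyperplanes compatibly so that the ladder commutes on the nose. Once this is done the remainder is a standard diagram chase.
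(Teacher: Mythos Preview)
Your proposal is correct and follows essentially the same approach as the paper's proof: Theorem~\ref{thm:suspgen} plus the square of Lemma~\ref{lem:com} assembled into a map of short exact sequences, then the 5-Lemma. Your explicit choice of the hyperplane $\bP^{t+i-1}\hookrightarrow\bP^{t+i}$ as $\Sigma^i$ of the original hyperplane, making the square strictly commutative, is a detail the paper leaves implicit but which is exactly the right way to justify the ``map of short exact sequences'' invoked there.
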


\begin{proof}
This follows from Theorem \ref{thm:suspgen}, by applying the 5-Lemma to  the map of short exact sequences 
(arising from Definition \ref{defn:pervcoh}) of the form 
$$0 \to z^{t-1,\ol p}(X,\bP^{t-1})(\bullet) \to  z^{t,\ol p}(X,\bP^t)(\bullet)  \to z^{t,\ol p}(X)(\bullet) \to 0$$
determined by Lemma \ref{lem:com}.
\end{proof}

Two natural sources of cocycles are flat morphisms and vector bundles.  Here we explain how arbitrary morphisms and coherent sheaves give rise to generalized cocycles (for a stratification and perversity determined by the morphism and sheaf respectively).

{\bf Morphisms.} Let $X$ and $Y$ be quasi-projective $k$-varieties.  If 
$f: Y \to X$ is a dominant flat morphism, then taking the cycle associated to the 
scheme-theoretic fiber $f^{-1}(x)$ determines an effective $d$-cocycle on $X$ with values in $Y$.
As we see in the following example, general morphisms provide examples of generalized cocycles.

\begin{ex}
With the notation as above, we define
$$\bZ \Hom^{\ol p}(-\times Y,X) \ \subset \ z^{d,\ol p}(X,Y)(-)$$
to be the subsheaf whose value on $U$ is the free abelian group on
the morphisms $f: U \times Y \to X$ with the property that the 
induced map $f_u: Y_u \to X_u$ is dominant
and the transpose of the graph
$\Gamma_{f_u}^{t} \subset X_u \times Y_u$ lies in $Z^{d,\ol p}(X_u, Y_u)$  (for  all $u \in U$).
\end{ex}

Our next proposition shows how $\bZ\Hom^{\ol p}(Y,X)$ acts on generalized cocycles.

\begin{prop} \label{morphisms act}
Let $X,\ Y$  be projective varieties, let $W$ be a quasi-projective variety, suppose $X$ is stratified, and
let $\ol p$ be a perversity.
Then there is a natural pairing given by proper push-forward
$$\bZ \Hom^{\ol p}(Y,X) \times z^{t,\ol 0}(Y,W) \to z^{d+t-n,\ol p}(X,W).$$
\end{prop}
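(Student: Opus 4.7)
The plan is to define the pairing via proper push-forward of cycles. Given a morphism $f : U \times Y \to X$ which is a summand of $\bZ \Hom^{\ol p}(Y,X)(U)$ and a cocycle $\alpha \in z^{t,\ol 0}(Y,W)(U)$, set $F := (\id_U, f) \times \id_W : U \times Y \times W \to U \times X \times W$ and define $f \cdot \alpha := F_* \alpha$. Since $Y$ is projective, $F$ is proper, so by \cite[Cor.~3.6.3]{SV} the push-forward defines an element of $z(X \times W, n+m-t)(U)$; the codimension check $\dim(X \times W) - (d+t-n) = m + n - t$ confirms this is the right ambient presheaf. We extend by linearity over $\bZ \Hom^{\ol p}(Y,X)(U)$, and naturality in $U$ follows from the compatibility of proper push-forward with base change \cite[Prop.~3.6.5]{SV}.

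To confirm that $F_* \alpha$ lies in $z^{d+t-n, \ol p}(X, W)$, we verify the perversity $\ol p$ bound and the cocycle condition. For the perversity bound, fix $u \in U$ and $x \in X_u^i - X_u^{i+1}$. Since push-forward does not increase supports, $|F_* \alpha_u|_x \subseteq F(|\alpha_u| \cap (f_u^{-1}(x) \times W))$, whence
\[\dim |F_* \alpha_u|_x \leq \dim f_u^{-1}(x) + (m - t) \leq (n - d + p_i) + (m - t) = (m + n - d - t) + p_i,\]
using the cocycle condition on $\alpha_u$ (fibers over $Y_u$ have dimension $m - t$) and the perversity $\ol p$ condition on $f_u$ (fibers over $X_u^i - X_u^{i+1}$ have dimension at most $n - d + p_i$).

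The cocycle condition on the open stratum is the main subtlety, since it demands an equality of fiber dimensions rather than just an upper bound. Each component $V$ of $\alpha_u$ has dimension $n + m - t$ and equidimensional fibers of dimension $m - t$ over $Y_u$, so $V$ dominates $Y_u$; since $f_u$ is dominant by hypothesis, $V$ also dominates $X_u$. If $F|_V$ is generically finite, the image $V' = F(V)$ has dimension $n + m - t$ and dominates $X_u$ with generic fiber of dimension $m + n - d - t$; components on which $F|_V$ is not generically finite contribute zero to $F_* \alpha_u$. Upper semi-continuity of fiber dimension combined with the perversity bound above (using $p_1 = 0$) then gives $\dim V'_x = m + n - d - t$ wherever the fiber is non-empty on $X_u - X_u^1$, completing the verification.
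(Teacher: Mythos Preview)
Your proof is correct and follows essentially the same approach as the paper: define the pairing by proper push-forward along the map $U \times Y \times W \to U \times X \times W$ induced by $f$, then bound the fiber dimension over $x \in X^i - X^{i+1}$ by $\dim f_u^{-1}(x) + (m-t) \le (n-d+p_i) + (m-t)$.

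Your third paragraph, verifying the genuine cocycle condition over the open stratum $X - X^1$, is a point the paper leaves implicit; the paper only records the upper bound on fiber dimensions and declares the claim proved. Your argument (components of $\alpha_u$ dominate $Y_u$, hence $X_u$ via $f_u$, so surviving components of the push-forward dominate $X_u$ and have generic fiber dimension $m+n-d-t$, which the upper bound then forces everywhere on $X_u - X_u^1$) is the right way to fill this in. It is not a different route, just a more complete execution of the same one.
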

\begin{proof}
It suffices to define the pairing for a pair $(f, \beta) \in \Hom^{\ol p}(Y,X)(U) \times z^{t,\ol 0}(Y,W)(U)$ consisting of a morphism $f :U \times Y \to X$ and a cycle $\beta \into U \times Y \times W$ with specializations $\beta_u$ equidimensional over $Y_u$.  Now $f$ induces a proper morphism $f : U \times Y \times W \to U \times X \times W$, and we claim $f_* (\beta)$ belongs to $z^{t,\ol p}(X,W)(U)$.  Set $w = \dim (W)$.  By hypothesis, for any $(u,y) \in U \times Y$, we have $\dim( |\beta_u|_y ) =w-t$.  Therefore, for any $(u, x) \in U \times X$, we have $\dim ( |f_*(\beta)_u|_x ) \leq \dim (f^{-1}(x)) + w-t$.  By assumption, $x \in X^i - X^{i+1}$ implies $\dim (f^{-1}(x)) \leq (n-d) + p_i$, and the claim follows.
The formation of $f_* (\beta)$ is functorial in $U$, so the pairing defines a natural transformation.
\end{proof}

 {As mentioned in the introduction, cycle classes on a resolution determine generalized cocycles on the variety being resolved.  We say a morphism $f : Y \to X$ determines a stratification $S$ and perversity $\ol p$ if $f$ does not belong to $\Hom^{\ol {q}}(Y,X)$ for any stricter incidence datum $(T, \ol q)$, with $T$ a stratification.
 \begin{prop}
 If $f: Y \to X$ is a resolution of singularities, push-forward along $f$ defines a morphism $H_{2 n - i}^{BM}(Y, \bb{Z}(n- t)) \to H^{i, t, \ol p}(X)$ {for the stratification and perversity determined by the resolution (and hence for any less strict incidence datum).}  
  \end{prop}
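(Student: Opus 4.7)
The plan is to factor the asserted push-forward through the motivic cohomology of the smooth source,
$$H_{2n-i}^{BM}(Y, \bb{Z}(n-t)) \ \xrightarrow{\mathrm{PD}} \ H^i(Y, \bb{Z}(t)) \ = \ H^{i,t,\ol 0}(Y) \ \xrightarrow{f_*} \ H^{i,t,\ol p}(X),$$
where the first arrow would be motivic Poincar\'e duality for the smooth variety $Y$ of dimension $n$, the middle equality is Remark \ref{FV compare}, and the last arrow is the push-forward which this proof constructs.

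For the push-forward I would argue as follows. Since $f$ is a resolution, $\dim Y = \dim X = n$, so the target cocycle dimension $d+t-n$ in Proposition \ref{morphisms act} collapses to $t$. By hypothesis $f \in \Hom^{\ol p}(Y, X)(k)$, and pairing this class through Proposition \ref{morphisms act} yields, for every projective $W$, a morphism of presheaves
$$f_* : z^{t, \ol 0}(Y, W) \ \to \ z^{t, \ol p}(X, W).$$
Applying this to $W = \bb{P}^t$ and $W = \bb{P}^{t-1}$ and invoking the naturality of proper push-forward in the target along the hyperplane inclusion $\ell : \bb{P}^{t-1} \hookrightarrow \bb{P}^t$, I obtain a commutative square with vertical arrows $f_*$ and horizontal arrows $\ell_*$. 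Passing to the quotient simplicial abelian groups produces a map $z^{t, \ol 0}(Y)(\bu) \to z^{t, \ol p}(X)(\bu)$, well-defined up to homotopy by Lemma \ref{hyperplane}, and applying $\pi_{2t-i}$ yields $f_* : H^{i,t,\ol 0}(Y) \to H^{i,t,\ol p}(X)$. The extension to any less strict incidence datum $\ol q \geq \ol p$ is then immediate by post-composing with the canonical map $H^{i,t,\ol p}(X) \to H^{i,t,\ol q}(X)$ induced by the presheaf inclusion $z^{t, \ol p}(X, W) \subseteq z^{t, \ol q}(X, W)$.

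The main obstacle is the first arrow: motivic Poincar\'e duality for a smooth variety $Y$ is not established within this paper and must be imported from the Friedlander-Voevodsky framework (see \cite{FV}), using the smoothness of $Y$ and the resolution-of-singularities hypothesis already in force. Once this duality is granted, everything else reduces to the formal application of Proposition \ref{morphisms act} together with functoriality of proper push-forward in the target variable.
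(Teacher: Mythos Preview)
Your proposal is correct and follows exactly the same route as the paper: the paper's proof cites Proposition~\ref{morphisms act} for the push-forward $f_*: H^{i,t,\ol 0}(Y) \to H^{i,t,\ol p}(X)$, Remark~\ref{FV compare} for the identification $H^{i,t,\ol 0}(Y) \cong H^i(Y,\bb{Z}(t))$, and Friedlander--Voevodsky duality \cite[Thms.~8.2, 8.3(1)]{FV} for $H^i(Y,\bb{Z}(t)) \cong H^{BM}_{2n-i}(Y,\bb{Z}(n-t))$. Your write-up is actually more explicit than the paper's one-line proof in spelling out how Proposition~\ref{morphisms act} induces a map on the quotient $z^{t,\ol p}(X)(\bu)$ via Lemma~\ref{hyperplane}, and in handling the passage to less strict incidence data.
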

\begin{proof} We have a push-forward $f_*: H^{i, t, \ol 0}(Y) \to H^{i, t, \ol p}(X)$ by Proposition $\ref{morphisms act}$, an identification $H^{i, t, \ol 0}(Y) \cong H^i (Y, \bb{Z}(t))$ by Remark $\ref{FV compare}$, and Friedlander-Voevodsky duality $H^i(Y, \bb{Z}(t)) \cong H^{BM}_{2 n - i }(Y, \bb{Z}(n -t))$ \cite[Thms.~8.2, 8.3(1)]{FV}. \end{proof}}

\textbf{Coherent sheaves.}  Suppose $\cF$ is a globally generated coherent sheaf on $X$ with generic rank $r$.  There is an exact sequence of sheaves on $X$:
$$0 \to \cK \to H^0(X, \cF) \otimes_k \cO_X \to \cF \to 0.$$
If $U \subset X$ is the locus over which $\cF$ is locally free, then the projectivization of the locally free sheaf $\cK |_U$ may be viewed as an element of $Z^r(U, \PP^n)$ with $n = h^0(X, \cF)-1$. 

We shall show in Proposition $\ref{coh1}$ below that the closure in $X \times \PP^n$ of this $\PP^{r-1}$-bundle over $U$, denoted $\PP(\cK)$, is an element of $Z^{r, \ol{p}}(X, \PP^n)$ for a stratification and perversity which may be expressed in terms of $\cF$ itself.
Namely, stratify $X$ according to the rank-jumping behavior of $\cF$.  Then there exists a sequence of integers $p_1, \ldots, p_d$ such that $x \in X^i$ if and only if $\rk (\cF |_x ) \geq r + p_i$ and $x \in X^i - X^{i+1}$ if and only if $\rk (\cF |_x ) \leq r + p_i$.  We say this stratification and perversity are determined by $\cF$.

\begin{prop} \label{coh1} Let $\cF$ be a globally generated coherent sheaf on $X$ with generic rank $r$, and set $n = h^0(X, \cF)-1$.  
Then $\PP(\cK) \in Z^{r, \ol{p}}(X, \PP^{n})$ for the stratification and perversity determined by $\cF$. \end {prop}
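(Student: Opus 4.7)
\emph{Plan.} Since $\PP(\cK)$ is by construction the closure in $X \times \PP^n$ of the $\PP^{n-r}$-bundle $\PP(\cK|_U)$ over the open dense $U$, it is automatically an irreducible closed subvariety of dimension $d + n - r$, so it has the right dimension to represent a codimension-$r$ cocycle. The substance of the proposition is the perversity bound: for $x$ in the stratum $X^i \setminus X^{i+1}$, where by hypothesis $\rk(\cF|_x) = r + p_i$, one must show $\dim \PP(\cK)|_x \leq n - r + p_i$.

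My approach is to enclose $\PP(\cK)$ in a convenient closed subscheme of $X \times \PP^n$ whose fiber over $x$ can be bounded directly. Dualizing the surjection $V \otimes \cO_X \twoheadrightarrow \cF$ gives an injection $\cF^\vee \hookrightarrow V^\vee \otimes \cO_X$. Choosing local generators $g_1, \ldots, g_m$ of $\cF^\vee$ in a neighborhood of $x$ and composing with the tautological surjection $V^\vee \otimes \cO_{\PP^n} \twoheadrightarrow \cO_{\PP^n}(1)$, each $g_j$ becomes a section of $\cO(1)$ on $X \times \PP^n$. Let $Z$ be their common vanishing locus. Over $U$, the forms $g_j(u) \in V^\vee$ span $\cF^\vee|_u = K_u^\perp$, so $Z \cap (U \times \PP^n) = \PP(\cK|_U)$, and hence $\PP(\cK) \subseteq Z$ in a neighborhood of $x$. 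The fiber $Z|_x$ is the projectivization of $\bigcap_j \ker(g_j(x)) \subseteq V$, of dimension $n - \dim_k \mathrm{span}\{g_j(x)\}$, so it will suffice to show $\dim_k \mathrm{span}\{g_j(x)\} \geq r - p_i$.

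This last inequality, equivalent to bounding from below the dimension of the image of the evaluation map $\cF^\vee|_x \to V^\vee$, I would derive from the four-term exact sequence
\[ 0 \to \cF^\vee \to V^\vee \otimes \cO_X \to \cK^\vee \to \cE xt^1(\cF, \cO_X) \to 0 \]
obtained by applying $\cH om(-, \cO_X)$ to the presentation $0 \to \cK \to V \otimes \cO_X \to \cF \to 0$. Since the image of $\cF^\vee$ in $V^\vee \otimes \cO_X$ is a coherent subsheaf of generic rank $r$, upper semicontinuity gives its fiber at $x$ dimension at least $r$; the possible discrepancy between this fiber dimension and the dimension of its image in $V^\vee$ at $x$ is controlled by local Tor contributions from the cokernel, whose size is in turn governed by the rank jump $p_i$ of $\cF$. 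I expect this final step -- pinning down the Tor/Ext estimate rigorously in terms of $p_i$ -- to be the main technical obstacle; it would likely be handled either by working with an explicit local free resolution of $\cF$ near $x$ or by a Fitting-ideal calculation. The resulting numerical bound is consistent with test cases such as cones on smooth projective varieties and ideal sheaves of smooth subvarieties, where the inequality can be verified by direct computation.
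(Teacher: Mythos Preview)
Your proposal has a genuine gap at exactly the point you flag. The inequality you need, $\dim_k \mathrm{span}\{g_j(x)\} \geq r - p_i$, amounts to a lower bound on the image of $\cF^\vee|_x \to V^\vee$, and there is no direct Tor/Ext estimate that produces this from the rank jump of $\cF$ alone. The dual $\cF^\vee$ discards all torsion in $\cF$ and factors through the reflexive hull $\cF^{\vee\vee}$, whose rank behavior at $x$ is not governed by $\dim \cF|_x$ in any obvious way. In the test cases you mention (and in several others one can try) your inequality does hold, often with equality, and your enclosing scheme $Z$ can be strictly larger than $\PP(\cK)$ while still meeting the bound; but turning this into a proof seems to require structural input beyond the four-term sequence you wrote down.

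The paper avoids dualizing the sheaf and instead dualizes the Grassmannian. One introduces the companion cycle $\PP(\cF) \subset X \times \PP^n$ (in the $\PP^n$ dual to the one housing $\PP(\cK)$), defined as the closure of the $\PP^{r-1}$-bundle over $U$ classified by $V \otimes \cO_U \twoheadrightarrow \cF|_U$. Because $\PP(\cF)$ is the main component of the scheme-theoretic projectivization of $\cF$, its fiber over $x$ is contained in $\PP(\cF|_x)$, hence has dimension at most $r + p_i - 1$; so $\PP(\cF)$ visibly satisfies the perversity determined by $\cF$. The content of the argument is that $\PP(\cK)$ and $\PP(\cF)$ have identical perversity. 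Over $U$ the quotient $V \twoheadrightarrow \cF|_u$ and the subspace $\cK|_u \hookrightarrow V$ correspond under the canonical isomorphism $\bb{G}(r-1,n) \cong \bb{G}(n-r,n)$, so the graph closures $X'' \subset X \times \bb{G}(r-1,n)$ and $X' \subset X \times \bb{G}(n-r,n)$ of the two rational maps are isomorphic over $X$. Expressing each of $\PP(\cF)$ and $\PP(\cK)$ as the pushforward of the tautological cocycle on the respective graph closure, one finds that the fiber dimension of each over $x$ equals a fixed constant ($r-1$ or $n-r$) plus the dimension of the image of $\pi^{-1}(x)$ in the Grassmannian; since the latter contribution is the same for both, the excess over every stratum agrees. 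This bypasses any analysis of $\cF^\vee$ or its Ext groups.
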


\begin{proof} Let $\PP(\cF) \into X \times \PP^n$ denote the closure in $X \times \PP^n$ of the $\PP^{r-1}$-bundle over $U$ classified by the surjection
$H^0(X, \cF) \otimes_k \cO_U \to \cF |_U$; the $\PP^n$ which appears here is dual to the one which houses $\PP(\cK)$.  Then the fiber of $\PP(\cF)$ over $x \in X$ is contained in the projectivization of the vector space $\cF |_x$, in fact $\PP(\cF)$ is the main component of the (possibly reducible) projectivization of $\cF$, hence the perversity of $\PP(\cF)$ is controlled by the rank-jumping behavior of $\cF$.

To prove the lemma, then, it suffices to show the perversity of $\PP(\cK)$ is identical to that of $\PP(\cF)$.  Let $X' \into X \times \bb{G}(n-r, n)$ denote the graph of the rational map $X \dashrightarrow \bb{G}(n-r, n)$ determined by $\cK |_U$, and let $X'' \into X \times \bb{G}(r-1, n)$ denote the graph of the map determined by $\cF |_U$.

Note that $\PP(\cK)$ is the push-forward via $X' \to X$ of the codimension $r$ cocycle on $X'$ with values in $\PP^n$ classified by the morphism $X' \to \bb{G}(n-r, n)$, and similarly $\PP(\cF)$ is the push-forward via $X'' \to X$ of the cocycle determined by $X'' \to \bb{G}(r-1, n)$.  Furthermore $X' \cong X''$ via the isomorphism $\bb{G}(n-r, n) \cong \bb{G}(r-1, n)$.

Let $F'_x \into X', F''_x \into X''$ denote the fibers over $x \in X$.  The dimension of the fiber of $\PP(\cK)$ over $x \in X$ is equal to the dimension of the image of the morphism $F'_x \to X' \to \bb{G}(n-r, n)$ plus $n-r$.  Similarly the dimension of the fiber of $\PP(\cF)$ over $x \in X$ is equal to the dimension of the image of $F''_x \to X'' \to \bb{G}(r-1, n)$ plus $r-1$.  By the previous paragraph, $F'_x  \cong F''_x$ compatibly with the isomorphisms of Grassmannians, hence the perversities agree.  \end{proof}

We denote by $z^{r, \ol{p}}(X, \PP^\infty)( \bullet)$ the simplicial abelian group $\displaystyle \colim_n z^{r, \ol{p}}(X, \PP^n)( \bullet)$.  Note that the transition maps in the colimit are the suspension weak equivalences
$\Sigma_{\PP^n} : z^{r, \ol{p}}(X, \PP^n)( \bullet) \to z^{r, \ol{p}}(X, \PP^{n+1})( \bullet).$

\begin{prop} The class of $\PP(\cK)$ in $\pi_0 (z^{r, \ol{p}}(X, \PP^\infty)( \bullet))$ is independent of the choice of generating sections of $\cF$.  \end{prop}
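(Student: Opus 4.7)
The plan is to show that any two surjections $\phi_0: \cO_X^{N_0} \to \cF$ and $\phi_1: \cO_X^{N_1} \to \cF$ arising from generating sections yield the same class in $\pi_0(z^{r,\ol p}(X,\PP^\infty)(\bu))$ by constructing explicit $\bA^1$-homotopies between the associated projective kernel cycles. First, pad each $\phi_i$ with zero sections to obtain surjections $\cO_X^N \to \cF$ with $N = N_0 + N_1$, placing $\phi_0$ on the first $N_0$ coordinates and $\phi_1$ on the last $N_1$ coordinates. This passage from $\PP(\cK_i)$ to the iterated suspension $\Sigma^{N_{1-i}}\PP(\cK_i)$ does not change the class in $\pi_0(z^{r,\ol p}(X,\PP^\infty)(\bu)) = \colim_n \pi_0(z^{r,\ol p}(X,\PP^n)(\bu))$, since the suspension maps are weak equivalences by Theorem~\ref{thm:suspgen}.

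Next, form the combined surjection $\Phi: \cO_X^N \to \cF$, $\Phi(v_0, v_1) = \phi_0(v_0) + \phi_1(v_1)$, with kernel $\cK_\Phi$, and build the $\bA^1$-family of surjections
$$\Phi^A : \cO_{\bA^1 \times X}^N \to \cF, \quad \Phi^A(v_0, v_1) = \phi_0(v_0) + t \cdot \phi_1(v_1),$$
where $t$ is the coordinate on $\bA^1$. Surjectivity at every point of $\bA^1 \times X$ follows from the surjectivity of $\phi_0$ alone, and the kernel $\cK^A$ is a coherent sheaf on $\bA^1 \times X$ that is locally free of rank $N - r$ on $\bA^1 \times U$, where $U \subset X$ is the open locus on which $\cF$ is locally free. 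Let $W^A \subset \bA^1 \times X \times \PP^{N-1}$ be the Zariski closure of the smooth $\PP^{N-r-1}$-bundle $\PP(\cK^A |_{\bA^1 \times U})$; it is irreducible of dimension $1 + d + N - r - 1$, dominates $\bA^1$, and thus is flat over the smooth curve $\bA^1$.

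I then claim that $W^A \in z^{r, \ol p}(X, \PP^{N-1})(\bA^1)$ provides a Suslin-style $\bA^1$-homotopy identifying $[\Sigma^{N_1}\PP(\cK_0)]$ at $t = 0$ with $[\PP(\cK_\Phi)]$ at $t = 1$ in $\pi_0(z^{r, \ol p}(X, \PP^{N-1})(\bu))$. Flatness over $\bA^1$ gives well-defined specializations; the specialization at each $t$ is the closure in $X \times \PP^{N-1}$ of the $\PP^{N-r-1}$-bundle on $U$ corresponding to $\ker(\Phi^A_t|_U)$, hence equals $\PP(\cK^A_t)$, which belongs to $Z^{r, \ol p}(X, \PP^{N-1})$ by Proposition~\ref{coh1} (the stratification and perversity are determined by $\cF$ itself, not by $t$). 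At $t=0$ one has $\cK^A_0 = \cK_0 \oplus \cO_X^{N_1}$, giving $(W^A)_0 = \PP(\cK_0 \oplus \cO_X^{N_1}) = \Sigma^{N_1}\PP(\cK_0)$, while at $t=1$ one has $\cK^A_1 = \cK_\Phi$, giving $(W^A)_1 = \PP(\cK_\Phi)$. The symmetric $\bA^1$-family $\Phi^B_t(v_0,v_1) = t\phi_0(v_0) + \phi_1(v_1)$ yields $[\Sigma^{N_0}\PP(\cK_1)] = [\PP(\cK_\Phi)]$, and combining the two equalities in $\pi_0(z^{r, \ol p}(X, \PP^\infty)(\bu))$ gives $[\PP(\cK_0)] = [\PP(\cK_\Phi)] = [\PP(\cK_1)]$.

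The main obstacle is verifying that the specialization of $W^A$ at a closed point $t_0 \in \bA^1$ agrees with $\PP(\cK^A_{t_0})$ as a cycle, rather than acquiring extra top-dimensional components supported over $(X \setminus U) \times \PP^{N-1}$. This follows from the flatness of $W^A$ over $\bA^1$, which forces the scheme-theoretic fiber over $t_0$ to have pure dimension $d + N - r - 1$ and constant Hilbert polynomial in $t$; since both the generic and special fibers are closures of $\PP^{N-r-1}$-bundles on $U$ attached to surjections $\cO_X^N \to \cF$ whose rank-jumping stratification on $X \setminus U$ is the common one dictated by $\cF$, a Hilbert polynomial comparison rules out additional components and yields $(W^A)_{t_0} = \PP(\cK^A_{t_0})$.
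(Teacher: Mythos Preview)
Your overall strategy---build an $\bA^1$-family of surjections onto $\cF$ and read off a homotopy in $z^{r,\ol p}(X,\PP^\infty)(\bu)$---is the same as the paper's. The paper does it in a single homotopy (using $(v_0,v_1)\mapsto t\cdot f(v_0)+(1-t)\cdot g(v_1)$ from $\cO^{2(n+1)}$) rather than your two-step route through $\PP(\cK_\Phi)$, and it treats only the case $N_0=N_1=h^0(X,\cF)$; these are minor variations.

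The substantive difference is in how you handle the ``main obstacle,'' and here your argument has a gap. You want to show the specialization $(W^A)_{t_0}$ coincides with $\PP(\cK^A_{t_0})$, and you appeal to a Hilbert polynomial comparison: the generic and special fibers are both closures over $U$ of $\PP^{N-r-1}$-bundles arising from surjections $\cO_X^N\to\cF$, so (you claim) they have the same Hilbert polynomial. But having the same rank-jumping stratification on $X\setminus U$ does not force the closures to have equal Hilbert polynomials; the closure over the lower strata can vary with the surjection even when the fiber dimensions are pinned down. So the comparison is not justified as stated. There is also circularity earlier: you use the identification $(W^A)_t=\PP(\cK^A_t)$ to invoke Proposition~\ref{coh1} and check the perversity, but that identification is precisely what is at issue.

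The paper's fix is cleaner and avoids Hilbert polynomials entirely. Apply the argument of Proposition~\ref{coh1} directly to the sheaf $p^*\cF$ on $\bA^1\times X$ with the surjection $\Phi^A$ from $\cO^N$: since the rank-jumping locus of $p^*\cF$ is $\bA^1\times X^i$, one obtains that the \emph{total} cycle $W^A=\PP(\cK^A)$ lies in $Z^{r,\ol p}(\bA^1\times X,\PP^{N-1})$. The fiber of $|W^A|$ over every $(t,x)$ with $x\in X^i-X^{i+1}$ therefore has dimension at most $(N-1)-r+p_i$, so each specialization $(W^A)_t$ lies in $Z^{r,\ol p}(X,\PP^{N-1})$ and $W^A\in z^{r,\ol p}(X,\PP^{N-1})(\bA^1)$. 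Now any extra top-dimensional component of $(W^A)_0$ beyond $\Sigma^{N_1}\PP(\cK_0)$ is supported over $X\setminus U\subseteq X^1$, hence of dimension $d+(N-1)-r$ sitting over a set of dimension $\le d-1$; its generic fiber over $X^1$ would then have dimension $\ge (N-1)-r+1>(N-1)-r+p_1$, violating the perversity just established. This simultaneously proves the membership in $z^{r,\ol p}(X,\PP^{N-1})(\bA^1)$ and identifies the endpoint specializations, without any Hilbert polynomial input.
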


\begin{proof} Suppose given exact sequences
$$0 \to \cK_f \to H^0(X, \cF) \otimes_k \cO_X \xrightarrow{f} \cF \to 0,$$
$$0 \to \cK_g \to H^0(X, \cF) \otimes_k \cO_X \xrightarrow{g} \cF \to 0.$$

The section $t \cdot f + (1-t) \cdot g$ determines an exact sequence of coherent sheaves on $X \times \bb{A}^1$ (let $p : X \times \bb{A}^1 \to X$ denote the projection):
$$0 \to \cK_{\bb{A}^1} \to H^0(X, \cF) \otimes_k \cO_{X \times \bb{A}^1}^2 \to p^* \cF \to 0.$$

The perversities of $\PP(p^* \cF)$ and $\PP(\cK_{\bb{A}^1})$ agree by the argument in the previous proposition, and the perversity of $\PP(p^* \cF)$ (for the product stratification) is the same as that of $\PP(\cF)$ itself.  Therefore $\PP(\cK_{\bb{A}^1})$ belongs to $z^{r, \ol{p}}(X, \PP^{2n+1})(\Delta^1)$.   Furthermore $\PP(\cK_{\bb{A}^1})_0 = \Sigma^{n+1} \PP(\cK_g)$ and $\PP(\cK_{\bb{A}^1})_1 = \Sigma^{n+1} \PP(\cK_f)$ since any additional components in the fibers at $t=0,1$ would violate the perversity condition, hence the elements agree in $\pi_0 (z^{r, \ol{p}}(X, \PP^{2n+1})(\bullet))$.  \end{proof}

%
%

\vskip .2in

\section{Join and cup product}
\label{sec:cup-cap}

{In this final section, we define pairings on sheaves of generalized cocycles and/or sheaves of perversity cycles.  These pairings determine pairings on the perversity motivic cohomology of Definition \ref{defn:pervcoh} and perversity motivic homology of Definition \ref{defn:pervhom} by Remark \ref{homotopy pairing}.}

The geometric operation underlying our cup product is the join.  The semi-topological precursor (in the absence of perversities)
 of our product is the cup product pairing on semi-topological cohomology defined using the fiberwise join \cite[Thm.~6.1]{FLcocycle};
 building on this, an algebraic version for smooth varieties is developed in \cite[Prop.~8.6]{FV}.

\begin{defn} Let $V$ be a $k$-scheme.  Given $\alpha \into V \times \bb{P}^t$ and $\beta \into V \times \bb{P}^s$, let $J_V(\alpha, \beta) \into V \times \bb{P}^{t+s+1}$ denote their fiberwise join.  If $\alpha$ (resp.~$\beta$) is an integral subscheme whose ideal sheaf is locally generated by $\{ f(x,t) \}$ (resp.~$\{ g(x,s) \}$), then $J_V(\alpha, \beta)$ is the (integral) subscheme with ideal sheaf locally generated by $\{ f(x,t); g(x,s) \}$.  (Here the $x$'s are coordinates on $V$, the $t$'s are coordinates on $\bb{P}^t$, and the $s$'s are coordinates on $\bb{P}^s$.)  We define the join of a general pair of cycles $\alpha, \beta$ by linear extension.  
\end{defn}

\begin{prop} \label{join} Let $X$ be a stratified quasi-projective variety.  The join defines a morphism of functors on $Sch / k$
$$z^{s, \ol{p}}(X, \bb{P}^s) \times z^{t, \ol{q}}(X, \bb{P}^t) \to z^{s+t, \ol{p} + \ol{q}}(X, \bb{P}^{s+t+1})$$
and similarly for the $equi$-theory.
\end{prop}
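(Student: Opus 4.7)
My plan is to reduce the statement to the Chow-variety machinery of Proposition \ref{cocycle basic}, using the fact that the join is implemented by a morphism of Chow varieties defined over the ground field. To set things up, I would fix the standard embedding $\PP^s = V(w_0,\dots,w_t)$ and $\PP^t = V(z_0,\dots,z_s)$ inside $\PP^{s+t+1}$, and define the scheme-theoretic join of closed subschemes $A\subset\PP^s$ and $B\subset\PP^t$ with homogeneous ideals $I_A\subset k[z]$ and $I_B\subset k[w]$ by $J(A,B) := V(I_A+I_B)\subset\PP^{s+t+1}$. Since $\PP^s$ and $\PP^t$ are disjoint linear subspaces of $\PP^{s+t+1}$, this is the classical join, and the dimension formula
\[
\dim J(A,B)\;=\;\dim A + \dim B + 1
\]
holds without any transversality hypothesis. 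Extending $\bZ$-linearly produces a morphism of semi-normal schemes
\[
J\colon \cC_a(\PP^s)\times\cC_b(\PP^t)\;\longrightarrow\;\cC_{a+b+1}(\PP^{s+t+1}),
\]
as already used in \cite{FLcocycle}, and hence, passing to group completions, a continuous algebraic map on cycle spaces defined over $k$.

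Next I would build the pairing of presheaves. Given $\alpha\in z^{s,\ol p}(X,\PP^s)(U)$ and $\beta\in z^{t,\ol q}(X,\PP^t)(U)$, Proposition \ref{cocycle basic}(1) represents each specialization $\alpha_u,\beta_u$ restricted to $(X-X^1)_u$ by rational maps $f_u\colon X_u\dashrightarrow\cC_0(\PP^s)$ and $g_u\colon X_u\dashrightarrow\cC_0(\PP^t)$. Post-composing $(f_u,g_u)$ with $J$ yields a rational map $X_u\dashrightarrow\cC_1(\PP^{s+t+1})$, and I define $J_X(\alpha,\beta)$ to be the closure on $U\times X\times\PP^{s+t+1}$ of the associated cycle. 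Fat-point independence and universally integral coefficients for $J_X(\alpha,\beta)$ then follow from the corresponding properties of $\alpha$ and $\beta$ by the exact same argument as in Proposition \ref{cocycle basic}(3): the morphism $J$ is defined over $k$, so it commutes with any specialization along a fat point, and the ideal-sum formula manifestly preserves integrality of coefficients.

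It remains to verify the two fiberwise dimension conditions defining $z^{s+t,\ol p+\ol q}(X,\PP^{s+t+1})$. For $x\in X-X^1$, the fibers $\alpha_x$ and $\beta_x$ are $0$-dimensional in $\PP^s$ and $\PP^t$, so the dimension formula above gives $\dim J(\alpha_x,\beta_x)=1=(s+t+1)-(s+t)$, which is the strict codimension-$(s+t)$ cocycle condition off $X^1$. For $x\in X^i-X^{i+1}$, the perversity bounds $\dim\alpha_x\le p_i$ and $\dim\beta_x\le q_i$ combine to $\dim J(\alpha_x,\beta_x)\le p_i+q_i+1$, which is precisely the $(\ol p+\ol q)$ bound at level $i$. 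The $equi$ version is the same analysis plus the observation that fiberwise joins over $U\times X$ of cycles of relative dimension $d$ over $U$ are themselves equidimensional of relative dimension $d+1$ over $U$. The main technical point is thus the two-variable analogue of Proposition \ref{cocycle basic}(3), but I expect no real obstacle there: the proof of that proposition is formal in the target Chow variety and carries over verbatim with $\cC_{n-t}(Y)$ replaced by the product $\cC_a(\PP^s)\times\cC_b(\PP^t)$ and $g$ replaced by $J$.
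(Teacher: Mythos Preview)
Your proposal is correct and follows essentially the same route as the paper: both realize the join as a continuous algebraic map $\cC_0(\PP^s)\times\cC_0(\PP^t)\to\cC_1(\PP^{s+t+1})$, use the rational-map description on $(X-X^1)$ to handle well-defined specializations (you package this via a two-variable version of Proposition~\ref{cocycle basic}(3), whereas the paper reproves that step inline), and then check the perversity bound via the dimension formula for the join. One small point to make explicit: when you bound $\dim J(\alpha_x,\beta_x)$ you are tacitly using that the fiber of the fiberwise join $J_{U\times X}(\alpha,\beta)$ over $x$ coincides with the join of the fibers $|\alpha_u|_x$ and $|\beta_u|_x$; the paper cites this as compatibility of the relative join with base change \cite[Remark~1.3.3(2)]{FOVjoins}, and it is what guarantees your closure-of-the-generic-cycle construction has the fiber dimensions you claim over the strata.
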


\begin{proof} We send the pair $(\alpha, \beta) \in z^{s, \ol{p}}(X, \bb{P}^s)(U) \times z^{t, \ol{q}}(X, \bb{P}^t)(U)$ to the fiberwise join $J := J_{U \times X}(\alpha, \beta) \into U \times X \times \PP^{s+t+1}$ described above.  

The join defines a continuous algebraic map $\cC_0(\PP^s) \times \cC_0(\PP^t) \to \cC_1 (\PP^{s+t+1})$ determined by sending $(p,q)$ to the line connecting $i_s(p)$ and $i_t(q)$, where $i_s$ (resp.~$i_t$) identifies $\PP^s$ (resp.~$\PP^t$) with the ``first" $s+1$ (resp.~``last" $t+1$) coordinates of $\PP^{s+t+1}$ {\cite[(6.1.1)]{FLcocycle}.}

The generic points of the join are in one-to-one correspondence with pairs of generic points of the cycles being joined.  Since the generic points of $\alpha_{\eta_U}$ and $\beta_{\eta_U}$ lie in $(X-X^1)_{\eta_U}$, the same is true of $J_{\eta_U}$.  Therefore it suffices to show the restriction of $J$ to $X-X^1$ has well-defined specializations for all $u \in U$.  But on $X-X^1$, all of the specializations $\alpha_u \ , \ \beta_u$ are given by morphisms $f_u : (X - X^1)_u \to \cC_0(\PP^s) \ , \ g_u : (X - X^1)_u \to \cC_0(\PP^t)$.  

Therefore, on $X-X^1$, every specialization $J_u$ is the cycle determined by the morphism $f_u \# g_u : (X-X^1)_u \to \cC_0(\PP^s) \times \cC_0(\PP^t) \to \cC_1 (\PP^{s+t+1})$.  The basic compatibility of morphisms to Chow varieties and pullbacks of cycles (as discussed in the proof of Proposition $\ref{cocycle basic}$) implies $J$ has well-defined specializations.  
From the definition it is clear that the join preserves integrality of the cycle coefficients.

Now we verify $J$ has the required incidence properties, which is a pointwise condition on $U$.  
The relative join is compatible with base change \cite[Remark 1.3.3(2)]{FOVjoins}. 
Therefore, if $x \in X$, the support of $J(\alpha_u, \beta_u)_x \into u \times x \times \bb{P}^{t+s+1}$ coincides with the support of $J ( |\alpha_{u}|_x, |\beta_{u}|_x) \into u \times x \times \bb{P}^{t+s+1}$.  In particular if $\alpha_u \in Z^{s, \ol{p}}(X_u, \bb{P}^s)$ and $\beta_u \in Z^{t, \ol{q}}(X_u, \bb{P}^t)$, then the dimension of the fiber of $J_u$ over $x \in X^i - X^{i+1}$ is less than or equal to $p_i + q_i + 1$, as desired.  
\end{proof}

Next we relate the ``total" groups $z^{s,\ol p} (X,\bP^s)(\bullet)$ to the ``pure" 
groups $z^{i,\ol p}(X)(\bullet)$ (Definition \ref{defn:pervcoh}) which isolate the cycles on $X\times \bP^s$ 
with no component supported on a hyperplane.  The proof here follows closely the proof of \cite[Thm.~2.10]{FLcocycle}.

For positive integers $s,t$ with $s \geq t$, and $K$ algebraically closed, there is a morphism 
$$\pi: SP^s(\PP^1_K) \  \to \ SP^{\binom{s}{t}} (SP^t (\PP^1_K))$$
sending the cycle $\sum_{i \in I} z_i$ to the cycle $\sum_{J \subset I, |J|=t} (\sum_{j \in J} z_j )$.  By Galois descent,
 the same formula defines a morphism assuming that $K$ is perfect, or if one works with cycles with 
 $\bZ[1/p]$-coefficients instead of  $\bZ$-coefficients.  (In characteristic zero, one should ignore all instances of $1/p$ which appear in the statements below.)
Since the symmetric product $SP^m(X)$ of a normal variety $X$ is normal, {the symmetric products which appear coincide with the weak normalizations of the Chow varieties $\cC_{0, m}(X)$.} 
Therefore $\pi$ induces a continuous algebraic map $\pi: \cC_0 (\PP^s) \to \cC_0 (\PP^t)$.

\begin{prop}
\label{prop:symm}
Let $X$ be a stratified quasi-projective variety.   For every $t \leq s$, there are natural maps 
of presheaves 
$$z^{s,\ol p}(X,\bP^s)[1/p](-) \ \to \ z^{t,\ol p}(X,\bP^t)[1/p](-)$$
with the property that for any choice of linear embeddings $\bP^{t-1} \subset \bP^t \subset \bP^s$
the composition 
$$z^{t,\ol p}(X,\bP^t)[1/p](\bu) \ \to \ z^{s,\ol p}(X,\bP^s)[1/p](\bu) \ \to \ z^{t,\ol p}(X,\bP^t)[1/p](\bu) \ \to \ z^{t,\ol p}(X)[1/p](\bu)$$
is homotopy equivalent to the natural projection of Lemma \ref{hyperplane}.
\end{prop}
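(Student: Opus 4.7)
First, I will construct the natural transformation $z^{s,\ol p}(X,\bP^s)[1/p] \to z^{t,\ol p}(X,\bP^t)[1/p]$ by applying Proposition \ref{cocycle basic}(3) to the continuous algebraic map $\pi\colon \cC_0(\bP^s) \to \cC_0(\bP^t)$ introduced in the preamble to the statement. Naturality in $U$ is then immediate from the functoriality built into part (3) of that proposition. The perversity condition $\ol p$ is automatically preserved, because post-composing a rational map $X\dashrightarrow \cC_0(\bP^s)$ with $\pi$ can only decrease (not increase) the dimension of the image of each fiber of the graph, and hence cannot increase fiber dimensions over any stratum---precisely the dimension inequality recorded in the proof of Proposition \ref{cocycle basic}(3).

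Second, I will analyze the composition pointwise. For $\alpha \in z^{t,\ol p}(X,\bP^t)[1/p](U)$ classified (generically) by a morphism $X^{sm}\to SP^m(\bP^t)$ for varying $m$, the pushforward $i_\ell(\alpha)$ lands in $SP^m(\bP^s)$, and the explicit ``partial-sum'' formula for $\pi$ decomposes $\pi(i_\ell(\alpha))$ into two contributions: a principal term of the shape $c\cdot \alpha$ coming from $t$-subsets whose support already lies in the embedded $\bP^t$, with $c$ a binomial coefficient; and a residual term, each of whose components is supported on a proper linear subvariety of $\bP^t\subset\bP^s$ and therefore represents an element of $z^{t-1,\ol p}(X,\bP^{t-1})[1/p]$ after invoking Lemma \ref{hyperplane} to compare different hyperplanes.

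Third, following the strategy of \cite[Thm.~2.10]{FLcocycle}, I will upgrade this pointwise decomposition to a simplicial homotopy after applying $(-)(\bu)$. An $\bA^1$-family of linear automorphisms of $\bP^s$ fixing $\bP^t$ pointwise---analogous to the $PGL$-homotopy used in Lemma \ref{hyperplane}---can be used to deform the residual contributions into the hyperplane subcomplex, hence into the kernel of $p_\ell$. This yields a simplicial homotopy $p_\ell \circ \pi \circ i_\ell \simeq c\cdot p_\ell$; since $c$ is a unit in $\bZ[1/p]$, multiplying by $c^{-1}$ produces the desired homotopy equivalence with $p_\ell$.

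The principal obstacle will be the combinatorial bookkeeping: identifying the principal term as a rational multiple $c\cdot\alpha$, computing $c$ explicitly, and verifying that the residual part truly lies in the subpresheaf $z^{t-1,\ol p}(X,\bP^{t-1})$ in a way compatible with perversity and the universal-integrality condition on coefficients. The coefficient ring $\bZ[1/p]$ enters essentially on two fronts: it makes the binomial multiplier $c$ invertible, and it legitimizes $\pi$ as a genuine morphism via the identification of Chow varieties with their weak normalizations, as indicated in the passage introducing $\pi$.
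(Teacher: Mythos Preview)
Your outline has the right architecture---build the map via Proposition \ref{cocycle basic}(3), then analyze $p\circ i$---but two of your key computations are off, and the paper's proof is both simpler and sharper than what you propose.

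First, the principal coefficient $c$ is $1$, not a general binomial. Trace the combinatorics: a point of the embedded $\bP^t\subset\bP^s$ corresponds (under $SP^s(\bP^1)\cong\bP^s$) to a degree-$s$ divisor on $\bP^1$ consisting of the original $t$ roots together with $s-t$ \emph{fixed} roots coming from the linear embedding. When $\pi$ forms all $t$-element subsets, exactly one subset recovers the original $t$ roots, and every other subset contains at least one fixed root. Thus $p\circ i = \id + \psi$ on the nose, with no multiplier to invert. Your claim that $\bZ[1/p]$ is needed to invert $c$ is therefore spurious; the localization enters only to make $\pi$ a genuine morphism (Galois descent of the symmetric-product formula in positive characteristic).

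Second, no $\bA^1$-homotopy is required to handle $\psi$. The residual terms---those $t$-subsets containing a fixed root---land in a single hyperplane $\bP^{t-1}\subset\bP^t$, not merely in some union of hyperplanes. This is the content of \cite[Lemma 2.11]{FLcocycle}, which the paper invokes directly over the smooth locus $X-X^1$ (where $\alpha_u$ is a genuine cocycle). The paper then observes that all generic points of $(p\circ i)(\alpha_u)-\alpha_u$ lie over $X-X^1$, so taking closure shows $\psi(\alpha_u)$ lies in $X_u\times\bP^{t-1}$ as a \emph{presheaf} element, not merely up to homotopy. Your proposed $PGL$-homotopy, using automorphisms of $\bP^s$ fixing $\bP^t$ pointwise, would in any case act trivially on the residual terms, since they already live in $X\times\bP^t$; so that step could not accomplish what you intend.

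In short: the paper proves the stronger statement $p\circ i = \id + \psi$ with $\psi$ factoring through $z^{t-1,\ol p}(X,\bP^{t-1})[1/p]$ at the level of presheaves, by restricting to $X-X^1$, citing \cite[Lemma 2.11]{FLcocycle}, and closing up. The homotopy equivalence with $p_\ell$ is then immediate from the definition of $z^{t,\ol p}(X)$ as a quotient.
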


\begin{proof}
Proposition $\ref{cocycle basic}$(3) implies that $\pi$ induces, for $s \geq t$, a natural transformation $p: z^{s, \ol p}(X, \PP^s)[1/p] \to z^{t, \ol p}(X, \PP^t)[1/p]$.
The flag $\PP^0 \into \PP^1 \into \cdots \into \PP^s$ induces a nested sequence of presheaves:
$$z_{}^{0, \ol{p}}(X, \PP^0)[1/p] \subset z_{}^{1, \ol{p}}(X, \PP^1)[1/p] \subset \ldots \subset z_{}^{s, \ol{p}}(X, \PP^s)[1/p].$$
It suffices to show the composition $p \circ i : z_{}^{t, \ol{p}}(X, \PP^t)[1/p] \subset z_{}^{s, \ol{p}}(X, \PP^s)[1/p] \to z_{}^{t, \ol{p}}(X, \PP^t)[1/p]$ is equal to $\id + \psi$, where $\psi: z_{}^{t, \ol{p}}(X, \PP^t)[1/p] \to z_{}^{t, \ol{p}}(X, \PP^t)[1/p]$ is a morphism factoring through $z_{}^{t-1, \ol{p}}(X, \PP^{t-1})[1/p]$.
For any $\alpha \in z^{t, \ol{p}}(X, \PP^t)[1/p](U)$ and any $u \in U$, the specialization $\alpha_u$
restricts to a cocycle on $(X-X^1)_u$ with values in $\PP^t$.
It follows from \cite[Lemma 2.11]{FLcocycle} that the restriction $j^* ((p \circ i) (\alpha_u) - \alpha_u)$ of $(p \circ i) (\alpha_u) - \alpha_u$ to $(X-X^1)_u$ lies in $(X-X^1)_u \times \PP^{t-1}$.  The morphism $p \circ i$ is compatible with the open immersion $j: X- X^1 \subset X$, and $X-X^1$ contains all of the generic points of $(p \circ i) (\alpha_u) - \alpha_u$.
Therefore the closure of $j^* ((p \circ i) (\alpha_u) - \alpha_u)$, namely $(p \circ i)(\alpha_u) - \alpha_u$, is contained in $X_u \times \PP^{t-1}$. 
\end{proof}

\begin{thm}
\label{thm:splitting}
Let $X$ be a stratified quasi-projective variety. 
The maps of Proposition \ref{prop:symm} induce a homotopy equivalence
\begin{equation}
\label{eq:split}
z_{}^{s,\ol p} (X,\bP^s)[1/p](\bullet) \ \stackrel{\sim}{\longrightarrow} \ \prod_{i=0}^{s} z_{}^{i,\ol p}(X)[1/p](\bullet)
\end{equation}
which is functorial with respect to flat, stratified morphisms.
\end{thm}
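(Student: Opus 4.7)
The plan is to proceed by induction on $s$. Write
$$\Phi_s \ := \ \prod_{i=0}^{s} (q_i \circ p_i^s) \colon z^{s, \ol p}(X, \PP^s)[1/p](\bu) \ \to \ \prod_{i=0}^{s} z^{i, \ol p}(X)[1/p](\bu),$$
where $p_i^s$ is the map from Proposition \ref{prop:symm} and $q_i \colon z^{i,\ol p}(X,\PP^i)[1/p] \to z^{i,\ol p}(X)[1/p]$ is the quotient of Definition \ref{defn:pervcoh}. The base case $s=0$ is trivial since $\pi_{0,0}$ is the identity and $z^{0,\ol p}(X,\PP^0) = z^{0,\ol p}(X)$.

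For the inductive step, I would compare the defining extension of $z^{s,\ol p}(X)[1/p]$ with the split extension of the product:
$$\xymatrix@C=.15cm{
0 \ar[r] & z^{s-1, \ol p}(X, \PP^{s-1})[1/p](\bu) \ar[r]^-{\iota} \ar[d]_{\Phi_{s-1}} & z^{s, \ol p}(X, \PP^s)[1/p](\bu) \ar[r]^-{q_s} \ar[d]_{\Phi_s} & z^{s, \ol p}(X)[1/p](\bu) \ar[r] \ar@{=}[d] & 0 \\
0 \ar[r] & \prod_{i=0}^{s-1} z^{i, \ol p}(X)[1/p](\bu) \ar[r] & \prod_{i=0}^{s} z^{i, \ol p}(X)[1/p](\bu) \ar[r] & z^{s, \ol p}(X)[1/p](\bu) \ar[r] & 0.}$$
The top row is short exact by Definition \ref{defn:pervcoh}, and the bottom is manifestly split. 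Commutativity of the right-hand square is immediate: $p_s^s$ is induced by $\pi_{s,s}=\id$, so the final component of $\Phi_s$ is literally $q_s$.

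The main work is verifying commutativity of the left-hand square up to homotopy, i.e., that $q_i \circ p_i^s \circ \iota \simeq q_i \circ p_i^{s-1}$ for each $i \leq s-1$. By Lemma \ref{hyperplane} I am free to choose the hyperplane $\PP^{s-1}\subset\PP^s$ corresponding under the identification $\PP^s=SP^s(\PP^1)$ to the inclusion $\alpha \mapsto \alpha + z_0$ for a fixed basepoint $z_0\in\PP^1$, and similarly fix $\PP^{i-1}\subset \PP^i$. Splitting the $i$-element subsets $J$ of $\alpha+z_0$ into those containing $z_0$ and those not yields the symmetric-product identity
$$\pi_{s,i}(\alpha + z_0) \ = \ \iota'\bigl(\pi_{s-1,i-1}(\alpha)\bigr) \ + \ \pi_{s-1,i}(\alpha),$$
where $\iota'$ is induced by $\PP^{i-1}\subset \PP^i$. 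Via Proposition \ref{cocycle basic}(3) this translates into the presheaf-level identity $p_i^s \circ \iota \ = \ \iota \circ p_{i-1}^{s-1} \ + \ p_i^{s-1}$ on $z^{s-1,\ol p}(X,\PP^{s-1})[1/p]$. Composing with $q_i$ annihilates the first summand (since $q_i\circ\iota=0$ by the very definition of $q_i$ as the cokernel of $\iota$), leaving $q_i\circ p_i^{s-1}$, as needed.

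Granting the diagram, the 5-Lemma applied to the induced map of long exact sequences of homotopy groups promotes the inductive hypothesis on $\Phi_{s-1}$ (and the identity on the right) to the conclusion that $\Phi_s$ is a homotopy equivalence. Functoriality with respect to a flat stratified morphism $f\colon Y\to X$ is automatic: flat pullback of cycles commutes with the fixed push-forwards along $\pi_{s,i}$ and along hyperplane inclusions via Proposition \ref{cocycle basic}(3), and with the quotients $q_i$. The principal obstacle is the combinatorial identity above and, in particular, the care needed to choose compatible basepoints so that the $SP$-level hyperplane inclusion is exactly $\alpha\mapsto \alpha+z_0$; once this is pinned down, everything else is formal diagram chasing.
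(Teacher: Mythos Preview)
Your argument is correct and is essentially an explicit unwinding of what the paper does. The paper's proof simply observes that the nested sequence
\[
z^{0,\ol p}(X,\PP^0)[1/p](\bu) \subset \cdots \subset z^{s,\ol p}(X,\PP^s)[1/p](\bu)
\]
together with the retractions of Proposition~\ref{prop:symm} satisfies the formal hypotheses of \cite[Prop.~2.13]{FLcocycle}, and then quotes that result. Your induction on $s$ with the five-lemma is precisely how that formal splitting criterion is proved; your Pascal-type identity
\[
\pi_{s,i}(\alpha + z_0) \ = \ \iota'\bigl(\pi_{s-1,i-1}(\alpha)\bigr) + \pi_{s-1,i}(\alpha)
\]
on symmetric products is the concrete input replacing the abstract statement ``$p\circ i = \id + \psi$ with $\psi$ factoring through a lower step'' in Proposition~\ref{prop:symm}. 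One small remark: with your choice of basepoints the left-hand square actually commutes on the nose, not merely up to homotopy, so you get a genuine map of short exact sequences and the five-lemma applies without any further justification. The advantage of your version is that it is self-contained and makes the combinatorics visible; the paper's version is shorter because it externalizes the formal step.
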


\begin{proof} The evaluation of the nested sequence of presheaves at $\Deltadot$ induces a nested sequence of simplicial abelian groups:
$$z_{}^{0, \ol{p}}(X, \PP^0)[1/p](\bullet) \subset z_{}^{1, \ol{p}}(X, \PP^1)[1/p](\bullet) \subset \cdots \subset z_{}^{s, \ol{p}}(X, \PP^s)[1/p](\bullet).$$
Proposition $\ref{prop:symm}$ implies that the formal hypotheses of \cite[Prop.~2.13]{FLcocycle} are satisfied.
The construction involves only the ``targets" $\PP^0, \ldots, \PP^s$, hence are compatible with flat pull-back via stratified morphisms.  
\end{proof}

\begin{remark} \label{itersusp rmk} One can replace the $\PP^s$ on the left hand side of the weak equivalence of Theorem $\ref{thm:splitting}$ with $\PP^r$ (for any $r \geq s$) by appealing to the suspension theorem $\ref{iterated suspension}$. \end{remark}

\begin{prop}
\label{prop:short}
Choose a hyperplane $\bP^{s-1} \hookrightarrow \bP^s$ and a non-negative integer $m$.  Then there is a 
split short exact sequence of homotopy groups
$$0 \to \ \pi_m(z^{s-1,\ol p}(X,\bP^{s-1})[1/p](\bu) ) \to \pi_m(z^{s,\ol p}(X,\bP^s)[1/p](\bu)) \to \pi_m (z^s(X)[1/p](\bu)) \to 0.$$
\end{prop}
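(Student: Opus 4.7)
The plan is to deduce Proposition \ref{prop:short} directly from Theorem \ref{thm:splitting} and the definition of $z^s(X)(\bu)$.

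First, by Definition \ref{defn:pervcoh}, we have a short exact sequence of simplicial abelian groups
$$0 \to z^{s-1, \ol p}(X, \PP^{s-1})[1/p](\bu) \xrightarrow{i_\ell} z^{s, \ol p}(X, \PP^s)[1/p](\bu) \xrightarrow{p_\ell} z^s(X)[1/p](\bu) \to 0,$$
which gives rise to a long exact sequence of homotopy groups by standard simplicial techniques. To conclude the proposition, it suffices to construct a splitting of $p_\ell$ up to homotopy, for then the connecting homomorphism vanishes and each of the long exact sequence's 3-term pieces becomes the desired split short exact sequence.

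The splitting will come from Theorem \ref{thm:splitting}. That theorem provides homotopy equivalences
\begin{equation*}
z^{s, \ol p}(X, \PP^s)[1/p](\bu) \xrightarrow{\sim} \prod_{i=0}^s z^i(X)[1/p](\bu), \qquad z^{s-1, \ol p}(X, \PP^{s-1})[1/p](\bu) \xrightarrow{\sim} \prod_{i=0}^{s-1} z^i(X)[1/p](\bu),
\end{equation*}
and the key step is to verify that under these equivalences the inclusion $i_\ell$ becomes the inclusion of the first $s$ factors (equivalently, the projection $p_\ell$ becomes projection onto the last factor). Given this compatibility, the projection onto the $s$-th factor followed by the inverse of the equivalence for $z^{s, \ol p}(X, \PP^s)[1/p](\bu)$ provides the desired splitting of $p_\ell$ on homotopy groups.

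The compatibility is the main (mild) obstacle. It is built into the proof of Theorem \ref{thm:splitting}: the splitting is constructed inductively from the nested sequence $z^0(X, \PP^0)[1/p] \subset z^1(X, \PP^1)[1/p] \subset \cdots \subset z^s(X, \PP^s)[1/p]$ using the maps of Proposition \ref{prop:symm}, and these maps are constructed so that the $i$-th factor $z^i(X)[1/p](\bu)$ corresponds to the composition of the inclusion $z^i(X, \PP^i)[1/p] \subset z^s(X, \PP^s)[1/p]$ with the projection to $z^i(X)[1/p](\bu)$. In particular, the subspace $z^{s-1, \ol p}(X, \PP^{s-1})[1/p](\bu) \subset z^{s, \ol p}(X, \PP^s)[1/p](\bu)$ maps precisely to the subproduct of the first $s$ factors, and the quotient $z^s(X)[1/p](\bu)$ projects isomorphically onto the last factor. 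Combining this with the long exact sequence of homotopy groups completes the proof.
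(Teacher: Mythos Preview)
Your argument is correct and follows essentially the same approach as the paper: start from the short exact sequence of simplicial abelian groups defining $z^{s,\ol p}(X)[1/p](\bu)$, pass to the long exact sequence of homotopy groups, and invoke Theorem \ref{thm:splitting} to obtain the splitting. The paper's proof is terser, simply asserting that the long exact sequence breaks into split short exact sequences ``thanks to Theorem \ref{thm:splitting},'' whereas you spell out the compatibility of the nested filtration with the product decomposition; this added detail is helpful but not a different method.
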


\begin{proof}
The short exact sequence of simplicial abelian groups
$$ 0 \to \ z^{s-1,\ol p}(X,\bP^{s-1}) [1/p](\bu)  \to z^{s,\ol p}(X,\bP^s)[1/p](\bu) \to  {z^{s,\ol p}(X,\bP^s) [1/p](\bu) \over z^{s-1,\ol p}(X,\bP^{s-1})[1/p](\bu)} \to 0$$
induces a long exact sequence in homotopy groups (because a surjective homomorphism
of simplicial abelian groups is a Kan fibration).   This long exact sequence splits into split short exact sequences
as asserted thanks to Theorem \ref{thm:splitting}.  \end{proof}

\begin{thm}
\label{cupproduct}
The fiberwise join pairings of Proposition \ref{join}
determine  natural (with respect to $X$)   ``cup product pairings"
$$\cup : H^{i,s,\ol p}(X)[1/p] \otimes H^{j,t,\ol q}(X)[1/p] \ \to \ H^{i+j, s+t,\ol p + \ol q}(X)[1/p].$$
\end{thm}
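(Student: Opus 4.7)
The plan is to construct the cup product by showing that the fiberwise join of Proposition \ref{join} respects the hyperplane filtrations defining the pure cocycle presheaves $z^{s, \ol p}(X)$, descends to their quotients, and --- after inverting $p$ --- pairs into $z^{s+t, \ol p + \ol q}(X)(\bu)$ via the suspension isomorphism of Theorem \ref{iterated suspension}. The induced bilinear pairing of simplicial abelian groups then yields the cup product on homotopy groups via the smash-product factorization recalled in (\ref{smash}).

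First, I would take the $\bZ$-bilinear morphism of presheaves provided by Proposition \ref{join},
$$\cup_J : z^{s, \ol{p}}(X, \PP^s) \otimes z^{t, \ol{q}}(X, \PP^t) \to z^{s+t, \ol{p} + \ol{q}}(X, \PP^{s+t+1}).$$
Second, I would verify compatibility with the hyperplane filtrations. With the standard embeddings $i_s : \PP^s \hookrightarrow \PP^{s+t+1}$ (onto the first $s+1$ coordinates) and $i_t: \PP^t \hookrightarrow \PP^{s+t+1}$ (onto the last $t+1$ coordinates) used in the definition of the relative join, the join of the hyperplane $\PP^{s-1} \subset \PP^s$ with $\PP^t$ is the hyperplane of $\PP^{s+t+1}$ cut out by the middle coordinate $x_s = 0$, and therefore equals a $\PP^{s+t}$. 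Hence $\cup_J$ restricts to
$$z^{s-1, \ol p}(X, \PP^{s-1}) \otimes z^{t, \ol q}(X, \PP^t) \to z^{s+t-1, \ol p + \ol q}(X, \PP^{s+t}),$$
and symmetrically in the other factor. Base-change compatibility of the relative join (cited in the proof of Proposition \ref{join}) ensures this restriction holds at the level of $U$-relative cycles rather than only at closed points.

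Third, these restrictions make $\cup_J$ descend to a $\bZ$-bilinear pairing on the quotient simplicial abelian groups
$$z^{s, \ol p}(X)(\bu) \otimes z^{t, \ol q}(X)(\bu) \to z^{s+t, \ol p + \ol q}(X, \PP^{s+t+1})(\bu) / z^{s+t-1, \ol p + \ol q}(X, \PP^{s+t})(\bu).$$
By Theorem \ref{iterated suspension} (with suspension index $i = 1$), after inverting $p$ the target is homotopy equivalent to $z^{s+t, \ol p + \ol q}(X)[1/p](\bu)$; this step is what forces the $[1/p]$ in the statement. Composing with a homotopy inverse yields the desired pairing of simplicial abelian groups and, after passage to homotopy groups via (\ref{smash}), the cup product with bidegrees matching via $(2s-i) + (2t-j) = 2(s+t) - (i+j)$. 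Naturality with respect to flat stratified morphisms $f : X' \to X$ follows from the fact that $f^*$ commutes fiberwise with the join operation, and independence of the chosen hyperplane quotients is guaranteed by Lemma \ref{hyperplane}.

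The main obstacle is the descent step: one must track both the perversity condition and the universally-integral-coefficient condition through the join construction to confirm that joining a $U$-relative cycle supported in $U \times X \times \PP^{s-1}$ with an arbitrary element of $z^{t, \ol q}(X, \PP^t)(U)$ produces a cycle supported in $U \times X \times \PP^{s+t}$. Via Proposition \ref{cocycle basic}, this reduces to a Chow-variety statement: the rational map $X - X^1 \dashrightarrow \cC_0(\PP^s) \times \cC_0(\PP^t) \to \cC_1(\PP^{s+t+1})$ associated with such a pair factors through the subvariety parametrizing lines contained in $\PP^{s+t}$, which is immediate from the definition of the join of a point of $\PP^{s-1}$ with a point of $\PP^t$. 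Specialization commutes with this factorization because the join of Chow varieties is a continuous algebraic map, so the verification propagates from generic points to all specializations as required.
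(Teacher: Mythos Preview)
Your approach is close in spirit to the paper's but contains a genuine gap in the descent step. You claim that the join descends to a bilinear pairing of \emph{simplicial abelian groups}
$$z^{s, \ol p}(X)(\bu) \otimes z^{t, \ol q}(X)(\bu) \to z^{s+t, \ol p + \ol q}(X, \PP^{s+t+1})(\bu) / z^{s+t-1, \ol p + \ol q}(X, \PP^{s+t})(\bu),$$
but the two restrictions you identify land in \emph{different} hyperplanes of $\PP^{s+t+1}$: joining $z^{s-1,\ol p}(X,\PP^{s-1})$ with anything lands in the hyperplane $\{x_s=0\}$, while joining anything with $z^{t-1,\ol q}(X,\PP^{t-1})$ lands in $\{x_{s+1}=0\}$. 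So you cannot quotient the target by a single $\PP^{s+t}$ and have both sub-pairings vanish there on the nose. Lemma \ref{hyperplane} only tells you the two quotient projections are \emph{homotopic}, not equal, so the descent you want exists only at the level of homotopy groups, not at the level of simplicial abelian groups.

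This is exactly why the paper works at the level of homotopy groups from the start, and it explains the role of $[1/p]$ that you misattribute. Theorem \ref{iterated suspension} holds integrally; the $[1/p]$ is needed because, once you work in $\pi_*$, you must first \emph{lift} a class in $H^{i,s,\ol p}(X) = \pi_{2s-i}(z^{s,\ol p}(X)(\bu))$ to $\pi_{2s-i}(z^{s,\ol p}(X,\PP^s)(\bu))$, and surjectivity of this lifting is exactly the split short exact sequence of Proposition \ref{prop:short}, which rests on the splitting theorem \ref{thm:splitting} and hence requires inverting $p$. Having lifted, one pairs via the join and projects; then Lemma \ref{hyperplane} is invoked to show the result is independent of the lift (the images of both hyperplane subgroups die after projection, since the two inclusions $i_\ell, i_{\ell'}$ are homotopic). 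Your argument establishes the geometric input (the join respects the hyperplane filtration in each variable separately) but skips both the lifting issue and the different-hyperplane issue that force the passage to $\pi_*[1/p]$.
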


\begin{proof}
Consider the composition 
$$ \pi_{2s-i}(z^{s, \ol{p}}(X, \bb{P}^s) [1/p](\bu) ) \otimes \pi_{2t-j}(z^{t, \ol{q}}(X, \bb{P}^t)[1/p](\bu) ) \to \hspace{1.5in} $$
$$ \  \to \pi_{2(s+t) -i-j}(z^{s+t, \ol{p} + \ol{q}}(X, \bb{P}^{s+t+1})[1/p](\bu) ) \to  \hspace{1.6in}$$
$$ \ \to \ \pi_{2(s+t) -i-j}(z^{s+t, \ol{p} + \ol{q}}(X, \bb{P}^{s+t+1}) [1/p](\bu) /z^{s+t-1, \ol{p} + \ol{q}}(X, \bb{P}^{s+t})[1/p](\bu) )$$
given by the map induced by fiberwise join followed by the projection.
We consider $\PP^s \# \PP^{t-1}$ and $\PP^{s-1} \# \PP^t$ inside $\PP^s \# \PP^t = \PP^{s+t+1}$ and 
apply the short exact sequence of Proposition \ref{prop:short}
and the independence statement of Lemma \ref{hyperplane}.
It follows that the composition sends both 
$$\pi_{2s-i}(z^{s, \ol{p}}(X, \bb{P}^s)[1/p](\bu)) \otimes \pi_{2t-j}(z^{t-1, \ol{q}}(X, \bb{P}^{t-1})[1/p](\bu)) \hspace{.6cm} \text{and}$$
$$\pi_{2s-i}(z^{s-1, \ol{p}}(X, \bb{P}^{s-1})[1/p](\bu)) \otimes \pi_{2t-j}(z^{t, \ol{q}}(X, \bb{P}^t)[1/p](\bu))$$
to 0.  Now the pairing is obtained by applying the equivalence of the suspension theorem \ref{iterated suspension} (as in Remark \ref{itersusp rmk}).
\end{proof}

The following proposition can be seen as having its origins in a semi-topological version given in \cite[Thm 2.6]{FW}.  Recall that $\ol t$ denotes the top perversity.

\begin{proposition} \label{smooth pairing} Let $X$ be a stratified quasi-projective variety, and let $Y$ be a smooth quasi-projective variety of dimension $n$.  Let $\ol p$ and $\ol q$ be perversities such that $\ol p + \ol q \leq \ol t$.  Restriction of correspondences determines a morphism of presheaves:
$$z^{t, \ol p}(X,Y) \times z(X, r)_{\ol q} \to z(X \times Y, r+n-t)_{\ol p + \ol q}$$
and therefore a pairing:
$$H^{i,t, \ol p}(X, Y) \otimes H^{\ol q}_m (X, \bb{Z}(r)) \to H^{BM}_{2n+m-i}(X \times Y, \bb{Z}(r+n-t)).$$
\end{proposition}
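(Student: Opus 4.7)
The plan is to send a pair $(\alpha,\beta) \in z^{t,\ol p}(X,Y)(U) \times z(X,r)_{\ol q}(U)$ to the intersection $\alpha \bu p_1^{*}(\beta)$ on $X\times Y$, where $p_1: X\times Y\to X$ is the projection.  Endow $X\times Y$ with the product stratification $(X\times Y)^i \equiv X^i\times Y$; with respect to this stratification, $p_1$ is flat and stratified of relative dimension $n$, so by the flat-pullback proposition of Section \ref{sec:two} we have $p_1^{*}(\beta) \in z(X\times Y, r+n)_{\ol q}(U)$.  The smooth locus of $X\times Y$ contains $X^{sm}\times Y$ (because $Y$ is smooth), and the expected dimension of the intersection is $(d+n-t)+(r+n)-(d+n)=r+n-t$.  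Theorem \ref{thm:star} applied on the stratified variety $X\times Y$ will then produce the required cycle in $z(X\times Y,r+n-t)_{\ol p+\ol q}(U)$, provided I verify that $(\alpha, p_1^{*}\beta)$ satisfies condition $(\ast,\ol p+\ol q)$.

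The crucial step is this dimension check, which reduces by the definition of $z_{r*s,\ol c}$ as a presheaf to a pointwise verification for each $u\in U$:
\[
\dim\!\bigl(|\alpha_u|\cap(|\beta_u|\times Y)\cap(X^i\times Y)\bigr)\ \leq\ (r+n-t) - i + (p_i + q_i) \quad \text{for all } i.
\]
My plan is to stratify the intersection by an auxiliary index $j\geq i$, according to which stratum $X^j - X^{j+1}$ a point of $|\beta_u|$ lies in.  The perversity $\ol q$ condition on $\beta_u$ gives $\dim(|\beta_u|\cap(X^j-X^{j+1}))\leq r-j+q_j$, while the generalized cocycle condition on $\alpha_u$ (Definition \ref{def:cocycle}) bounds the fiber of $|\alpha_u|\to X$ over each such point by $n-t+p_j$.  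Combining yields a contribution of dimension at most $(r-j+q_j)+(n-t+p_j)=(r+n-t)-j+(p_j+q_j)$; since $\ol p + \ol q$ is a perversity, $j\mapsto p_j+q_j-j$ is non-increasing in $j\geq i$, so the maximum over $j\geq i$ is attained at $j=i$ and gives the required bound.  Functoriality in $U$ is automatic, since both flat pullback and the intersection operation of Theorem \ref{thm:star} are natural transformations of presheaves on $Sch/k$.

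With the morphism of presheaves in hand, I evaluate on the cosimplicial scheme $\Deltadot$ to obtain a $\bZ$-bilinear pairing of simplicial abelian groups, which factors through the smash product and induces a pairing on homotopy groups as in (\ref{smash}).  The bidegree bookkeeping is
\[
\pi_{2t-i}\bigl(z^{t,\ol p}(X,Y)(\bu)\bigr)\ \otimes\ \pi_{m-2r}\bigl(z(X,r)_{\ol q}(\bu)\bigr)\ \longrightarrow\ \pi_{(2t-i)+(m-2r)}\bigl(z(X\times Y,r+n-t)_{\ol p+\ol q}(\bu)\bigr),
\]
and $(2t-i)+(m-2r) = (2n+m-i)-2(r+n-t)$, matching the target $H^{\ol p+\ol q}_{2n+m-i}(X\times Y,\bb{Z}(r+n-t))$.

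The main obstacle is the dimension estimate of the second paragraph.  The subtlety lies in combining two different types of constraints: the cocycle condition on $\alpha$ controls the ``vertical'' dimension (fibers over $X$), whereas the perversity condition on $\beta$ controls the ``horizontal'' dimension (meeting strata inside $X$); the product stratification converts the joint bound into a condition of type $(\ast,\ol p+\ol q)$, but only because the perversity-sum remains a perversity (so that the pointwise maximum in the stratification argument is attained at the correct index).  Once this is established, invoking Theorem \ref{thm:star} and standard homotopy-theoretic formalism finishes the proof.
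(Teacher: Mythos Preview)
Your argument follows the same route as the paper: pull $\beta$ back along the flat stratified projection $p_1\colon X\times Y\to X$, verify that the pair $(\alpha,p_1^*\beta)$ satisfies condition $(*,\ol p+\ol q)$ by combining the fiberwise bound on $\alpha$ from the generalized-cocycle condition with the incidence bound on $\beta$ from the perversity $\ol q$ condition, and then invoke Theorem~\ref{thm:star}. The bidegree bookkeeping and the passage to homotopy groups via (\ref{smash}) are exactly what the paper does.

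One point to flag: your assertion that ``$\ol p+\ol q$ is a perversity, so $j\mapsto p_j+q_j-j$ is non-increasing'' is not correct in general---the sum of two perversities can jump by $2$ at a single step (e.g.\ $\ol p=\ol q=(0,0,1)$ gives $\ol p+\ol q=(0,0,2)$), so that function need not be monotone. The paper's proof does not perform your auxiliary stratification by $j\geq i$; it simply records the bound $\dim\bigl(|\alpha_u|\cap(|\beta_u|\times Y)\cap((X^i-X^{i+1})\times Y)\bigr)\leq (r+n-t)-i+(p_i+q_i)$ over each locally closed stratum and invokes Theorem~\ref{thm:star} from there. In effect, your more careful treatment surfaces a subtlety about closed versus locally closed strata that the paper leaves implicit; the overall strategy, however, is identical.
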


\begin{proof} Given $\alpha, \beta \in z^{t, \ol p}(X,Y)(U) \times z(X, r)_{\ol q}(U)$, the dimension of $\alpha_u$ over any $x \in X^i - X^{i+1}$ is less than or equal to $n-t+p_i$.  The dimension of $\beta_u \cap X^i_{u}$ is less than or equal to $r -i + q_i$.  Therefore the support of $|\alpha| \cap |\beta \times Y| \cap (X^i - X^{i+1} \times Y)$ has dimension no larger than $(r-i+q_i) + (n-t +p_i) = (r+n-t) -i + (p_i + q_i)$.   {This means precisely the pair $(\alpha, \beta)$ satisfies the condition $(*, \ol p + \ol q)$ of Definition \ref{def:star}.}  Then Theorem $\ref{thm:star}$ implies the closure of the intersection product formed in $X^{sm} \times Y$ belongs to $z(X \times Y, r+n-t)_{\ol p + \ol q}(U)$, as desired. \end{proof}

\begin{prop} \label{cap} {Assume $k$ admits resolution of singularities, or that $k$ is perfect and we use $1/p$ coefficients.}
Let $X$ be a stratified quasi-projective variety, and let $\ol p$ and $\ol q$ be perversities such that $\ol p + \ol q \leq \ol t$.
There is a cap product map (morphism of presheaves)
$$z^{t, \ol p}(X) \times z (X,r)_{\ol q} \to z(X \times \bb{A}^t)$$
which induces a {homotopy} pairing of simplicial abelian groups
$$z^{t, \ol p}(X)(\bu) \times z(X,r)_{\ol q}(\bu)  \to z(X, r-t)(\bu)$$
and therefore a pairing
{$$\cap : H^{ i, t, \ol p}(X) \otimes H^{\ol q}_m(X, \bb{Z}(r)) \to H^{BM}_{m-i} (X , \bb{Z}(r-t)).$$}
 \end{prop}
 
\begin{proof} Recall that $z^{t, \ol p}(X)$ is the quotient presheaf 
$z^{t, \ol p}(X, \PP^t) / z^{t-1, \ol p}(X, \PP^{t-1})$.  The pairing of Proposition $\ref{smooth pairing}$ induces a pairing
$$z^{t, \ol p}(X) \times z (X,r)_{\ol q} \to z(X \times \PP^t, r) / z(X \times \PP^{t-1}, r)$$
and we have homotopy equivalences  
$z(X \times \PP^t, r)(\bu) / z(X \times \PP^{t-1}, r)(\bu) \stackrel{\sim}{\to}  z(X \times \bb{A}^t, r)(\bu) \stackrel{\sim}{\leftarrow}  z(X, r-t)(\bu)$ {\cite[Thm.~8.3(1)]{FV}}.
\end{proof}

\begin{remark} If one could establish homotopy equivalences
$$z(X \times \PP^t, r)_{\ol p + \ol q}(\bu) / z(X \times \PP^{t-1}, r)_{\ol p + \ol q}(\bu) \stackrel{\sim}{\to}  z(X \times \bb{A}^t, r)_{\ol p + \ol q}(\bu) \stackrel{\sim}{\leftarrow}  z(X, r-t)_{\ol p + \ol q}(\bu)$$
then Proposition $\ref{cap}$ could be refined in that the targets could be replaced by the perverse versions (with perversity $\ol p + \ol q$).  
Proposition $\ref{cap}$ is the ``correct" statement for complementary perversities (i.e., $\ol p + \ol q = \ol t$).
\end{remark}

Proposition $\ref{smooth pairing}$ extends to the case where $Y$ is singular.

\begin{corollary} \label{sing pairing} Let $X$ be a stratified quasi-projective variety, and let $Y$ be a quasi-projective variety of dimension $n$.
Let $\ol p$ and $\ol q$ be perversities such that $\ol p + \ol q \leq \ol t$.
Restriction of correspondences determines a morphism of presheaves:
$$z^{t, \ol p}(X,Y) \times z(X, r)_{\ol q} \to z(X \times Y, r+n-t)_{\ol p + \ol q}.$$  \end{corollary}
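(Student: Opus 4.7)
The plan is to reduce to Proposition $\ref{smooth pairing}$ by embedding $Y$ as a closed subscheme of a smooth quasi-projective variety. Since $Y$ is quasi-projective, there is a closed immersion $i: Y \hookrightarrow Y'$ with $Y'$ smooth and quasi-projective of some dimension $n' \geq n$ (for instance, if $\overline{Y} \subset \PP^N$ is a projective closure of $Y$, take $Y' = \PP^N \setminus (\overline{Y} \setminus Y)$, which is smooth and open in $\PP^N$). Set $c = n' - n$ and let $j = \id_X \times i : X \times Y \hookrightarrow X \times Y'$ denote the resulting closed immersion.

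Push-forward along the closed immersion $j$ (cf.~\cite[Cor.~3.6.3]{SV}) yields a morphism of presheaves $j_* : z^{t, \ol p}(X, Y) \to z^{t + c, \ol p}(X, Y')$, because a closed immersion preserves underlying supports and hence fiber dimensions over $X$: the bound $n - t + p_i$ coincides with $n' - (t + c) + p_i$. Given $(\alpha, \beta) \in z^{t, \ol p}(X, Y)(U) \times z(X, r)_{\ol q}(U)$, I would apply Proposition $\ref{smooth pairing}$ to the pair $(j_*(\alpha), \beta)$ on the smooth variety $Y'$ to obtain a cycle
$$j_*(\alpha) \cdot \beta \ \in \ z\bigl(X \times Y', \, r + n - t\bigr)_{\ol p + \ol q}(U).$$
The key observation is that this cycle is supported inside $X \times Y \subseteq X \times Y'$: by the construction in Theorem $\ref{thm:star}$ its support is contained in $|j_*(\alpha)| \cap |\beta \times Y'|$, and since $|j_*(\alpha)| = |\alpha|$ already lies in $X \times Y$, so does the whole intersection.

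Because $j$ is a closed immersion, push-forward is injective on the subgroup of cycles whose support lies in $X \times Y$, so there exists a unique $\alpha \cdot \beta \in z(X \times Y, r + n - t)(U)$ with $j_*(\alpha \cdot \beta) = j_*(\alpha) \cdot \beta$; this defines the desired pairing. The perversity condition with respect to the stratification $\{X^i \times Y\}$ of $X \times Y$ is inherited: for each $u \in U$, the set $|\alpha \cdot \beta|_u \cap (X^i \times Y)$ maps bijectively under $j$ onto $|j_*(\alpha) \cdot \beta|_u \cap (X^i \times Y')$ (the latter being already contained in $X \times Y$), so its dimension is at most $(r + n - t) - i + (p_i + q_i)$. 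Functoriality in $U$ is automatic from that of its ingredients, and independence of the choice of embedding is seen by comparing two choices $Y \hookrightarrow Y_j'$ via a common smooth ambient, e.g.~the diagonal embedding $Y \hookrightarrow Y_1' \times Y_2'$.

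The main technical obstacle is confirming that $j_*(\alpha) \cdot \beta$ remains supported in $X \times Y$ rather than acquiring components in $X \times (Y' \setminus Y)$. Proposition $\ref{smooth pairing}$ constructs this cycle as a closure in $X \times Y'$ of an intersection product computed on the smooth locus $X^{sm} \times Y'$, a locus that strictly contains $X^{sm} \times Y$ when $Y$ is singular; but the relevant intersection is already contained in $|\alpha| \cap (|\beta| \times Y) \subseteq X \times Y$, and since $X \times Y$ is closed in $X \times Y'$, closing up does not enlarge this containment.
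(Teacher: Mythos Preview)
Your proposal is correct and follows essentially the same route as the paper: embed $Y$ as a closed subvariety of an open subset of projective space, push forward into the smooth ambient, apply Proposition~\ref{smooth pairing} there, and observe that the resulting cycle is supported on $X\times Y$ so the pairing factors through $z(X\times Y, r+n-t)_{\ol p+\ol q}$. You are more explicit than the paper (which compresses all of this into a single sentence) and you additionally sketch independence of the embedding, which the paper does not address.
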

\begin{proof} Embed $Y$ as a closed subvariety of codimension $c$ of some open subvariety $\PP$ of a projective space.  The restriction of the pairing 
$$z^{t+c, \ol p}(X,\PP) \times z(X, r)_{\ol q} \to z(X \times \PP, r+\dim(\PP)-t-c)_{\ol p + \ol q}$$
provided by Proposition $\ref{smooth pairing}$ to the subpresheaf $z^{t, \ol p}(X,Y) \times z(X, r)_{\ol q}$ factors through $z(X \times Y, r+n-t)_{\ol p + \ol q}$. \end{proof}

\begin{remark}
The restriction of the pairing of Corollary $\ref{sing pairing}$ to the subsheaf $\bZ \Hom^{\ol p}(Y,X) \subset z^{d, \ol  p}(X, Y)$ 
may be thought of as sending a pair $(f, \beta) \in \Hom^{\ol p}(Y,X)(U) \times z(X,r)_{\ol q}(U)$ to the pull-back of $\beta \into U \times X$ along $f : U \times Y \to U \times X$.  (Strictly speaking we intersect the graph of $f$ with the pull-back of $\beta$ to $U \times X \times Y$.)
\end{remark}

We establish the compatibility of our pairings with those defined by Goresky-MacPherson.  First we construct the analogue of the perverse cycle class map of Proposition $\ref{prop:GM}$.  In the next two statements, $\bb{F}$ denotes an arbitrary coefficient field.

\begin{lemma}
\label{lem:cohom cycle class}
Let $X$ be a stratified variety of dimension $d$ over $\bb{C}$, and suppose the stratification is sufficiently fine to compute the intersection homology groups $IH^{\ol p}_*(X)$.  Then there is a canonical perverse cycle class map
$$c : H^{2t,t,\ol p}(X) \to IH^{\ol p}_{2(d-t)}(X, \bb{F}).$$
\end{lemma}

\begin{proof}
By applying $\pi_0 ( - (\bu))$ to the inclusion of sheaves $z^{t, \ol p}(X, \PP^t) \to z(X \times \PP^t, d)_{\ol p}$
and composing with the map from Proposition $\ref{prop:GM}$, we obtain a map
$Z^{t, \ol p}(X, \PP^t) / \sim_{\ol p} \to IH^{\ol p}_{2d}(X \times \PP^t, \bb{F})$.
This construction is functorial with respect to the inclusion of a hyperplane $\PP^{t-1} \to \PP^t$, hence it yields
$$H^{2t,t,\ol p}(X) \to IH^{\ol p}_{2d}(X \times \PP^t, \bb{F}) / IH^{\ol p}_{2d}(X \times \PP^{t-1}, \bb{F}).$$

Let $[\PP^j] \in H_{2j}(\PP^t, \bb{F}) \cong \bb{F}$ denote the canonical generator.  The K\"unneth theorem for intersection homology (here we need field coefficients, see\cite[Thm.~4]{King}) provides an identification
$$IH^{\ol p}_{2d}(X \times \PP^t, \bb{F}) \cong \bigoplus_{j=0}^{d} IH_{2(d-j)}(X, \bb{F}) \cdot [\PP^j].$$
This isomorphism is functorial with respect to $X \times \PP^{t-1} \to X \times \PP^t$ and hence yields an identification $IH^{\ol p}_{2d}(X \times \PP^t, \bb{F}) / IH^{\ol p}_{2d}(X \times \PP^{t-1}, \bb{F}) \cong IH^{\ol p}_{2(d-t)}(X, \bb{F})$.  Altogether we obtained a map $H^{2t,t,\ol p}(X) \to IH^{\ol p}_{2(d-t)}(X, \bb{F})$ as desired.
\end{proof}

\begin{prop}
\label{prop:new compat}
Via the cycle class map described in Lemma $\ref{lem:cohom cycle class}$, the pairing in Proposition in $\ref{cupproduct}$ is compatible with the pairing in intersection homology.  In other words, the following diagram is commutative:

$$\xym{ H^{2s,s,\ol p}(X) \otimes H^{2t,t,\ol q}(X)  \ar[r]^-{\cup} \ar[d]_{c \otimes c} &   H^{2(s+t), s+t,\ol p + \ol q}(X) \ar[d]^c \\
IH^{\ol p}_{2(d-s)}(X, \bb{F}) \otimes IH^{\ol q}_{2(d-t)}(X, \bb{F}) \ar[r]  & IH^{\ol p + \ol q}_{2(d -s -t)}(X, \bb{F}) } $$

\end{prop}

\begin{proof}
In the smooth locus of $X$, the join maps to the cup product of cohomology classes \cite[Prop.~6.3]{FLcocycle}.
Pairs of generalized cocycles intersect properly in each stratum, and the intersection homology pairing between chains intersecting properly in each stratum is determined by the cup product of the corresponding cohomology classes in the smooth locus \cite[2.1]{GM1}.   Therefore it suffices to show the identification $IH^{\ol p}_{2d}(X \times \PP^t, \bb{F}) / IH^{\ol p}_{2d}(X \times \PP^{t-1}, \bb{F}) \cong IH^{\ol p}_{2(d-t)}(X, \bb{F})$ is compatible with products.  But this identification may be described as the pull-back (\cite[5.4.3]{GM2}) $p_1^* : IH^{\ol p}_{2(d-t)}(X, \bb{F}) \to IH^{\ol p}_{2d}(X \times \PP^t, \bb{F})$ followed by the canonical projection onto $IH^{\ol p}_{2d}(X \times \PP^t, \bb{F}) / IH^{\ol p}_{2d}(X \times \PP^{t-1}, \bb{F})$, and both of these maps are compatible with intersection pairings. \end{proof}

\bibliography{intlawhom}{}
\bibliographystyle{plain}
\end{document}